\numberwithin{equation}{section}
\DeclareMathOperator\eD{e}
\def\eps{\varepsilon }
\DeclareMathOperator{\Id}{Id}
\DeclareMathOperator{\Tr}{Tr}
\DeclareMathOperator{\dD}{d}
\DeclareMathOperator{\Res}{Res}
\newcommand\br{\begin{remark}}
\newcommand\er{\end{remark}}
\newcommand\bp{\begin{pmatrix}}
\newcommand\ep{\end{pmatrix}}
\newcommand{\be}{\begin{equation}}
\newcommand{\ee}{\end{equation}}
\newcommand{\bes}{\begin{equation*}}
\newcommand{\ees}{\end{equation*}}
\newcommand\ba{\begin{equation}\begin{aligned}}
\newcommand\ea{\end{aligned}\end{equation}}
\newcommand\bas{\begin{equation*}\begin{aligned}}
\newcommand\eas{\end{aligned}\end{equation*}}
\newcommand\ds{\displaystyle}
\newcommand{\beg}{\begin{example}}
\newcommand{\eeg}{\end{example}}
\newcommand{\bpr}{\begin{proposition}}
\newcommand{\epr}{\end{proposition}}
\newcommand{\bt}{\begin{theorem}}
\newcommand{\et}{\end{theorem}}
\newcommand{\bc}{\begin{corollary}}
\newcommand{\ec}{\end{corollary}}
\newcommand{\bl}{\begin{lemma}}
\newcommand{\el}{\end{lemma}}
\newcommand{\bd}{\begin{definition}}
\newcommand{\ed}{\end{definition}}
\newcommand{\brs}{\begin{remarks}}
\newcommand{\ers}{\end{remarks}}
\newtheorem{theorem}{Theorem}[section]
\newtheorem{proposition}[theorem]{Proposition}
\newtheorem{corollary}[theorem]{Corollary}
\newtheorem{lemma}[theorem]{Lemma}
\theoremstyle{remark}
\newtheorem{remark}[theorem]{Remark}
\theoremstyle{definition}
\newtheorem{definition}[theorem]{Definition}
\newtheorem{example}[theorem]{Example}
\newcommand\R{\mathbf R}
\newcommand{\N}{\mathbf N}
\newcommand\bB{{\mathbf B}}
\newcommand\bE{{\mathbf E}}
\newcommand\bF{{\mathbf F}}
\newcommand\bG{{\mathbf G}}
\newcommand\bJ{{\mathbf J}}
\newcommand\bL{{\mathbf L}}
\newcommand\bT{{\mathbf T}}
\newcommand\bfe{{\mathbf e}}
\newcommand\bfu{{\mathbf u}}
\newcommand\bfv{{\mathbf v}}
\newcommand\bfw{{\mathbf w}}
\newcommand\bfx{{\mathbf x}}
\newcommand\bfy{{\mathbf y}}
\newcommand\bfz{{\mathbf z}}
\newcommand\ube{{\underline \bfe}}
\newcommand\ubw{{\underline \bfw}}
\newcommand\ubx{{\underline \bfx}}
\newcommand\uby{{\underline \bfy}}
\newcommand\ubz{{\underline \bfz}}
\newcommand\te{\widetilde{e}}
\newcommand\tg{\widetilde{g}}
\newcommand\ttau{\widetilde{\tau}}
\newcommand\tbw{\widetilde{\bfw}}
\newcommand\tbx{\widetilde{\bfx}}
\newcommand\tby{\widetilde{\bfy}}
\newcommand\ua{{\underline a}}
\newcommand\ub{{\underline b}}
\newcommand\ue{{\underline e}}
\newcommand\ug{{\underline g}}
\newcommand\cB{{\mathcal B}}
\newcommand\cL{{\mathcal L}}
\newcommand\cO{{\mathcal O}}
\newcommand\cT{{\mathcal T}}
\newcommand\cV{{\mathcal V}}
\newcommand{\GC}{\bfx_{\rm gc}}
\newcommand{\eGC}{e_{\rm gc}}
\newcommand{\uGC}{\underline{\GC}}
\newcommand{\ueGC}{\underline{\eGC}}
\newcommand{\tGC}{\tbx_{\rm gc}}
\newcommand{\teGC}{\te_{\rm gc}}
\newcommand{\yGC}{\bfy_{\rm gc}}
\newcommand{\gGC}{g_{\rm gc}}
\newcommand{\tyGC}{\tby_{\rm gc}}
\newcommand{\tgGC}{\tg_{\rm gc}}
\title[Analysis of a Crank-Nicolson scheme for plasmas]{Convergence analysis of a Crank-Nicolson scheme\\
for strongly magnetized plasmas}
\author{Francis Filbet}
\address{Universit\'e de Toulouse,  Institut de Math\'ematiques de
  Toulouse, F-31062 cedex, France}
\email{francis.filbet@math.univ-toulouse.fr}
\thanks{}
\author{L.~Miguel Rodrigues}
\address{Universit\'e de Rennes, Institut de Recherche en Math\'ematiques
  de Rennes, F-35000 Rennes, France}
\email{luis-miguel.rodrigues@univ-rennes.fr}
\thanks{Research of L.M.R. was partially supported by the ANR Project HEAD ANR-24-CE40-3260 and the Institut Universitaire de France.}
\author{Kim Han Trinh}
\address{Universit\'e de  Rennes, Institut de Recherche en Math\'ematiques
  de Rennes, F-35000 Rennes, France}
\email{kim-han.trinh@univ-rennes.fr}
\begin{document}

\begin{abstract}

The present paper is devoted to the convergence analysis of an asymptotic preserving particle scheme designed to serve as a particle pusher in a Particle-In-Cell (PIC) method for the Vlasov equation with a strong inhomogeneous magnetic field. The asymptotic preserving scheme that we study removes classical strong restrictive stability constraints on discretization steps while capturing the large-scale dynamics, even when the discretization is too coarse to capture fastest scales. Our error bounds are explicit regarding the discretization and stiffness parameters and match sharply numerical tests. The present analysis is expected to be representative of the general analysis of a class of schemes, developed by the authors, conceived as implicit-explicit schemes on augmented formulations.

\vspace{0.5em}

{\small \paragraph {\bf Keywords:} Asymptotic preserving schemes; High-order time discretization;
Strong magnetic field; Inhomogeneous magnetic field; Particle methods; Vlasov system.
}

\vspace{0.5em}

{\small \paragraph {\bf AMS Subject Classifications:} 65M75, 76X05, 65L04, 65M15, 35Q83, 82D10.
}
\end{abstract}

\date{\today}
\maketitle

\tableofcontents

\section{Introduction}\label{s:introduction}

The numerical analysis of the present paper is motivated by the numerical computations of the dynamics of plasmas where the charged particles evolve under an electrostatic  and intense confining magnetic field. This configuration is typical of a tokamak plasma \cite{bellan_2006_fundamentals, miyamoto_2006_plasma} where the magnetic field is used to confine particles inside the core of the device. Kinetic models, based on a mesoscopic description of the various particles constituting a plasma, and coupled to Maxwell's equations for the computation of the electromagnetic fields, are very precise approaches for the study of such thermonuclear fusion plasmas. Incidentally we point out that magnetized plasmas are also encountered in a wide variety of astrophysical situations, and similar kinetic/Maxwell models, thus also our associated analysis, are relevant for those too.

Our present analysis contributes to the convergence analysis of numerical approximations obtained by particle methods (see \cite{BiLa85}), which consist in approximating the particle distribution function by a finite number of macro-particles. The trajectories of these particles are determined from the characteristic curves associated  to the underlying Vlasov equation; see \eqref{eq:ODE} below for the characteristic differential equations studied here. From the presence of a strong magnetic field stems a stiff /dynamics where high-frequency oscillations co-exist with slower dynamics. Insisting on capturing the full stiff dynamics introduces a stringent restriction on discretization meshes. Over the last decade a wealth of energy has been poured into designing schemes relaxing this constraint. Let us simply refer the reader to \cite{FRT25} for a detailed discussion and comparison of various particle pushers for PIC methods and to the introduction of \cite{FiRo23} for an account of other families of methods. In contrast, much fewer effort has been devoted to provide a complete mathematical analysis of the designed schemes. We mention \cite{CCLMZ19,HLW20,hairer2022,WZ21,WJ23,Yin25} as reprensative notable exceptions. It is worth pointing out that, in contrast with our present analysis, inspired by \cite{FiRo21}, all of those analyses fit in the frame of and use tools from the classical theory of geometric numerical integration, as described in \cite{HLW-book}.

Before delving into concrete description and analysis let us replace the scheme we propose and completely analyze here into the family of schemes \cite{FiRo16,FiRo17,FiRo23,FRT25} it belongs to. The starting point is the hope that if one aims at simply capturing accurately the slow part of the dynamics the stiffness step constraint could be relaxed and the observation that in simplest cases this could be achieved through suitable semi-implicit (IMEX) particles pushers. In \cite{FiRo16} this was carried out to capture the spatial dynamics in uniform magnetic fields. A complete analysis of this situation is provided in \cite{FiRo21}. A genuinely new idea was needed to extend the approach to realistic situations when one cannot focus simply on the spatial dynamics. The solution proposed in \cite{FiRo17}, and explained further below, is to apply the approach of \cite{FiRo16} to an extended system. In \cite{FiRo17} this is achieved for inhomogeneous magnetic fields with a fixed direction. In \cite{FiRo23} the strategy is adapted to more realistic geometries and in \cite{FRT25} it is shown how to improve conservation properties. Our present goal is to provide a complete analysis for schemes of the kind designed in \cite{FRT25}, thus the present contribution is thought to be to \cite{FRT25} what \cite{FiRo21} is to \cite{FiRo16}. The scheme we completely analyze is very close to the ones introduced and tested in \cite{FRT25} but we propose a new scheme so as to minimize the technicalities of the analysis. We restrict here to proving error bounds for the computation of characteristics but we refer to \cite{FiRo21} for a discussion on how this may be converted into errors on particle distribution functions.

Considering time-independent external electromagnetic fields with a fixed-direction magnetic field and an electric field deriving from an electric potential and focusing on the transverse dynamics on a time scale adapted to the strength of the magnetic field leads to the following set of characteristic differential system
\begin{align} \label{eq:ODE}
    \begin{cases}
        & \eps \dfrac{{\dD} \bfx}{{\dD} t} \, = \, \bfv \,, \\[0.5em]
        & \eps \dfrac{{\dD} \bfv}{{\dD} t} \,=\, \bE(\bfx) \,- \, b( \bfx) \dfrac{\bfv^{\perp}}{\eps} \,,
    \end{cases}
\end{align}
where the transverse space-velocity unknown $(\bfx,\bfv)$ is valued in $\R^2\times\R^2$, $\eps>0$ encodes the typical strength of the magnetic field, $b$ tracks its spatial variation and is lower bounded away from zero
\[
b(\bfx)\geq b_0>0\,,
\]
$\bE$ denotes the electric field and is assumed to take the form 
\[
\bE(\bfx) \,=\, -\, \nabla_{\bfx} \phi(\bfx)
\]
for some electric potential $\phi$. Both $b$ and $\phi$ and all their derivatives are assumed to be smooth and bounded. The symbol ${}^\perp$ denotes the direct orthogonal rotation, $\bfw^{\perp}=(-w_2,w_1)$. System \eqref{eq:ODE} is meant to be solved on a time interval $[0,T]$ and completed by an initial condition $\bfx(0) \,=\, \bfx^{0}$, $\bfv(0) \,=\, \bfv^{0}$. Note that solutions to \eqref{eq:ODE} also satisfy the energy conservation
\[
\frac{\dD}{\dD t}\left(\,\frac12\|\bfv\|^2+\phi(\bfx)\,\right)\,=\,0\,.
\] 

Since the arguments are to be generalized at the discrete level we provide in Section~\ref{s:continuous} a proof of the quite well-known fact, stated here as Proposition~\ref{p:cont-x}, that for any solution $(\bfx,\bfv)$ to \eqref{eq:ODE}, $(\bfx,\tfrac12\|\bfv\|^2)$ remains $\eps$-close to a solution $(\bfy,g)$ to
\begin{align} \label{eq:asymp}
    \begin{cases}
        & \dfrac{{\dD} \bfy}{{\dD} t} \, = \,-\dfrac{\bE^\perp}{b}(\bfy) \,-\,g\,\nabla^\perp_{\bfx}\left(\dfrac1b\right)(\bfy)\,, \\[0.5em]
        & \dfrac{{\dD} g}{{\dD} t} \, = \,-\dfrac{{\dD}\,\phi(\bfy)}{{\dD} t} \,. 
    \end{cases}
\end{align}
When doing so we follow closely the approach of \cite{FiRo20}. The variables $(\bfx,\tfrac12\|\bfv\|^2)$ are precisely the slow variables that we want to keep computing accurately even in situations when $\bfv$ is oscillating too fast to be reasonably computed. Before moving to the discrete level, let us stress that something that is difficult to enforce at the discrete level is that at the continuous level, as $\eps$ goes to zero, the velocity $\bfv$ converges weakly to zero whereas the kinetic energy $\tfrac12\|\bfv\|^2$ converges to a nonzero $g$ compatible with energy conservation, both being needed in the derivation of \eqref{eq:asymp}. 

The accurate computation of $(\bfx,\tfrac12\|\bfv\|^2)$ shall be enforced through the requirement that the discretization should be compatible with $\epsilon$ asymptotics. This is the asymptotic-preserving paradigm. For recent overviews on the latter we refer to \cite{DeDe17,Jin_APnew}. To achieve this, the relatively simple strategy initiated in \cite{FiRo17} is to compute solutions to \eqref{eq:asymp} as special solutions of a larger system where $(\bfv,\tfrac12\|\bfv\|^2)$ are artificially separated, say as $(\bfw,e)$ so that numerical schemes are allowed to damp $\bfw$ to zero while letting $e$ converge to a suitable $g$ as $\eps\to0$. Note that the point is that the point is that the continuous time evolution preserves for the augmented vector $(x,e,\bfw)$ preserves the relation $e=\tfrac12\|\bfw\|^2$ whereas the corresponding discretized dynamics does not. Thus picking a numerical scheme for \eqref{eq:ODE} along the strategy of \cite{FiRo17,FiRo23,FRT25} consists in choosing
\begin{itemize}
\item an augmented formulation for $(\bfx,e,\bfw)$, reducing to \eqref{eq:ODE} with $\bfv=\bfw$ when $e=\tfrac12\|\bfw\|^2$ 
\item a discretization of the augmented formulation compatible with $\eps$ asymptotics by \eqref{eq:asymp}.
\end{itemize}
Let us stress that the extra cost of considering an augmented formulation is considered as affordable here because it is to be incorporated in a particle method. It is far from obvious that a similar strategy could be incorporated in different numerical methods. 

The augmented system we choose here is 
\begin{align} \label{eq:aug}
    \begin{cases}
        & \eps \dfrac{{\dD} \bfx}{{\dD} t} \, = \, \bfw \,-\,\eps\,\left(e-\frac12\|\bfw\|^2\right)\,\nabla^\perp_{\bfx}\left(\dfrac1b\right)(\bfx)\,, \\[0.5em]
        & \eps \dfrac{{\dD} e}{{\dD} t} \, = \,-\eps \dfrac{{\dD}\,\phi(\bfx)}{{\dD} t} \,, \\[0.5em]
        & \eps \dfrac{{\dD} \bfw}{{\dD} t} \,=\, \bE(\bfx) \,- \, b( \bfx) \dfrac{\bfw^{\perp}}{\eps}\,.
    \end{cases}
\end{align}
Now consider a time step $\Delta t>0$ and define $(\bfx^n,e^n,\bfw^n)$, to serve as an approximation of $(\bfx,e,\bfw)(n\,\Delta t)$ when $(\bfx,e,\bfw)$ solves \eqref{eq:aug}, through the scheme 
\begin{equation} \label{scheme:CN}
\begin{cases}
        & \dfrac{\bfx^{n+1} - \bfx^{n}}{\Delta t} \, =\, \dfrac{\ubw^{n+1/2}}{\eps} 
        \,-\,\left(\ue^{n+1/2} \,-\, \dfrac{1}{2} \| \ubw^{n+1/2} \|^{2}\right)\nabla_{\bfx}^{\perp}\left(\dfrac1b\right)(\ubx^{n+1/2})\,, \\[0.5em]
        & \dfrac{e^{n+1} - e^{n}}{\Delta t} \,= \, \dfrac{\phi(\bfx^{n}) - \phi(\bfx^{n+1})}{\Delta t} \,,\\[0.5em]
        & \dfrac{\bfw^{n+1} - \bfw^{n}}{\Delta t} \,= \, \dfrac{1}{\eps} \bE(\ubx^{n+1/2}) \, - \, b(\ubx^{n+1/2}) \dfrac{(\ubw^{n+1/2})^{\perp}}{\eps^{2}} \,, 
\end{cases}
\end{equation}
where the underlining at half times denotes averaging
\begin{align*}
\ubz^{n+1/2} \,:=\, \dfrac{\bfz^{n+1} + \bfz^{n}}{2}\,,
\end{align*}
and initialization is $\bfx^{0}\, =\,\bfx^{0}$, $\bfw^{0} \,=\, \bfv^0$, $e^{0} \,=\,\tfrac12\|\bfv^0\|^2$.  
Note that by design \eqref{scheme:CN} does not propagate the relation $e^n=\tfrac12\|\bfw^n\|$ that holds initially. In Appendix~\ref{s:implicit} we check that Scheme~\eqref{scheme:CN} possesses a unique solution when $\Delta t$ is sufficiently small, independently of $\eps$.

The scheme we consider here differs from the one studied in \cite{FRT25} mainly on the choice of the augmented system. Explicitly we have decided to rely on energy conservation to describe the kinetic energy evolution and to introduce in the evolution equation of $\bfx$ the extra "ghost" term --- here $-\,\eps\,\left(e-\frac12\|\bfw\|^2\right)\,\nabla^\perp_{\bfx}\left(b^{-1}\right)(\bfx)$ --- that vanishes identically at the continuous level for the kind of initial data we are interested in but contributes to enforce the right $\eps$ asymptotics at the discrete level. In contrast, in the augmented system of \cite{FRT25} the dynamics of $(\bfx,e)$ is chosen to be
\begin{align*}
\eps \dfrac{{\dD} \bfx}{{\dD} t} &\, = \, \bfw\,,&
\eps \dfrac{{\dD} e}{{\dD} t}&\,=\,\bE(\bfx)\cdot\bfw\,,
\end{align*}
and a "ghost" term is introduced in the evolution equation of $\bfw$. Our numerical tests show similar performances between the scheme of \cite{FRT25} and the one studied here. The complete convergence study of the scheme of \cite{FRT25} would require a similar but lengthier analysis.

If we were to couple the underlying Vlasov equation with field equations we would need to reconstruct the velocity $\bfv^{n}$ from $(e^n,\bfw^n)$. A natural choice, compatible with energy conservation, is
\[
\bfv^{n} \,= \,\sqrt{2\,e^{n}} \dfrac{\bfw^{n}}{\|\bfw^{n}\|} \,,
\]
when $e^n$ is nonnegative and $\bfw^n$ is non zero. 

Even at the formal level it is not completely obvious that System~\eqref{eq:asymp} does describe the $\eps$ asymptotics of the slow part of \eqref{eq:ODE}. In contrast at fixed $\Delta t$ one may easily guess what could be the asymptotic scheme for the slow part of solutions to \eqref{scheme:CN} that do not blow up when $\eps\to0$. Indeed one may expect that at fixed $\Delta t$, for such solutions, $\ubw^{n+1/2}\eps^{-1}+(\bE^\perp/b)(\ubx^{n+1/2})$ becomes $\eps$-small as $\eps$ goes to zero so that $(\bfx^n,e^n)$ becomes $\eps$-close to $(\bfy^n,g^n)$ solving
\begin{equation} \label{scheme:asymp}
\begin{cases}
        & \dfrac{\bfy^{n+1} - \bfy^{n}}{\Delta t} \,=\, - \dfrac{\bE^{\perp}}{b}(\uby^{n+1/2})
        \,-\,\ug^{n+1/2}\nabla_{\bfx}^{\perp}\left(\dfrac1b\right)(\uby^{n+1/2})\,, \\
        & \dfrac{g^{n+1} - g^{n}}{\Delta t} \, = \,-\dfrac{\phi(\bfy^{n+1}) - \phi(\bfy^{n})}{\Delta t} \,.
\end{cases}
\end{equation}
Note that \eqref{scheme:CN} and \eqref{scheme:asymp} are second-order schemes respectively for \eqref{eq:aug} and \eqref{eq:asymp}. Such simple first guarantees are already provided in \cite{FiRo16,FiRo17,FiRo23,FRT25} and are typically named there asymptotic consistency as $\eps\to0$ for a fixed $\Delta t$. Our present goal is to prove that indeed solutions of \eqref{scheme:CN} do not blow up --- inherently a stability question --- and to provide uniform estimates when $(\eps,\Delta t)$ is small but no relation between $\Delta t$ and (any power of) $\eps$ is assumed.

\subsection*{Main results}

Our main result is the following theorem that quantifies in which sense $\left(\bfx,\tfrac12\|\bfv\|^2\right)$ is indeed well-captured even when the time step $\Delta t$ is too large for oscillations at frequency $\eps^{-2}$.

\begin{theorem}\label{th:main}
Let $T>0$, $M>0$ and $\eps_0>0$. There exist $\delta_0>0$ and $C>0$ such that when $0<\eps\leq\eps_0$ and $0<\Delta t\leq\delta_0$ if $(\bfx,\bfv)$ solves \eqref{eq:ODE} on $[0,T]$ with initial datum $(\bfx^0,\bfv^0)$ such that $\|\bfv^0\|\leq M$, 
and $(\bfx,e,\bfw)$ solves \eqref{scheme:CN} with initial datum $(\bfx^0,e^0,\bfw^0)$ such that $\bfw^0=\bfv^0$, $e^0=\tfrac12\|\bfv^0\|^2$, 
then for any $n$ such that $0\leq n\Delta t\leq T$,
\[
\left\|(\bfx^n,e^n)-\left(\bfx,\tfrac12\|\bfv\|^2\right)(n\,\Delta t)\right\|
\,+\,\eps\,\|\bfw^n-\bfv(n\,\Delta t)\|\\
\leq C\,\min\left(\left\{\,\eps+(\Delta t)^2\,;\,\dfrac{(\Delta t)^2}{\eps^5}\,\right\}\right)\,.
\]
\end{theorem}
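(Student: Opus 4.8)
The plan is to prove \emph{two} complementary a priori bounds on
\[
\cE_n\;:=\;\Bigl\|(\bfx^n,e^n)-\bigl(\bfx,\tfrac12\|\bfv\|^2\bigr)(t_n)\Bigr\|\;+\;\eps\,\bigl\|\bfw^n-\bfv(t_n)\bigr\|\,,\qquad t_n:=n\,\Delta t\,,
\]
namely $\cE_n\le C\,(\eps+(\Delta t)^2)$ and $\cE_n\le C\,(\Delta t)^2/\eps^{5}$, each valid for all $(\eps,\Delta t)$ with $0<\eps\le\eps_0$, $0<\Delta t\le\delta_0$; the theorem then follows by taking the smaller right-hand side. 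Throughout, the $e$-component is essentially free: the second line of \eqref{scheme:CN} telescopes to the exact identity $e^n+\phi(\bfx^n)=e^0+\phi(\bfx^0)$, while $\tfrac12\|\bfv\|^2+\phi(\bfx)$ and (for \eqref{scheme:asymp}) $g^n+\phi(\bfy^n)$ obey the same conservation with the \emph{same} constant, so every error on $e^n$ is controlled by the corresponding error on $\bfx^n$. One first records uniform stability: besides the well-posedness of \eqref{scheme:CN} for $\Delta t\le\delta_0$ (Appendix~\ref{s:implicit}), one rewrites the third line of \eqref{scheme:CN} as $\bfw^{n+1}=\cR^n\bfw^n+\bfg^n$, where $\cR^n$ is the (orthogonal) Cayley transform associated with $\tfrac{\Delta t\,b(\ubx^{n+1/2})}{2\eps^{2}}$ times the rotation by $\tfrac\pi2$ and $\|\bfg^n\|\le C\eps$; since $\prod\cR^k$ is orthogonal and its phase increments sweep a full turn over $O(\eps^2/\Delta t)$ steps, a discrete oscillatory (Abel) summation gives $\|\bfw^n\|\le C$ uniformly in $(\eps,\Delta t)$, hence $\|\bfx^n\|\le C$ on $[0,T]$, and the defect $d^n:=e^n-\tfrac12\|\bfw^n\|^2$ (which vanishes at $n=0$) obeys a closed discrete recursion.

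\textbf{Classical estimate $\cE_n\le C(\Delta t)^2/\eps^{5}$.} Here one compares $(\bfx^n,e^n,\bfw^n)$ directly with the solution of \eqref{eq:aug} issued from the \emph{same} data; since $e^0=\tfrac12\|\bfw^0\|^2$, that solution is exactly $(\bfx,\tfrac12\|\bfv\|^2,\bfv)$. As \eqref{scheme:CN} is a second-order, implicit-midpoint-type discretization of \eqref{eq:aug}, its truncation errors are controlled by the derivative bounds $\|\partial_t^k\bfw\|\le C\eps^{-2k}$ and $\|\partial_t^k\bfx\|\le C\eps^{-(2k-1)}$, $k\ge1$, read off \eqref{eq:aug} (the velocity rotates at frequency $\sim\eps^{-2}$), giving one-step errors of order $(\Delta t)^3\eps^{-5}$ on $\bfx$ and $(\Delta t)^3\eps^{-6}$ on $\bfw$; were the error recursion to close with $O(1)$ amplification, one would get exactly $\cE_n\le C(\Delta t)^2\eps^{-5}$. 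The difficulty is that the $\bfw$-error equation couples to the $\bfx$-error through terms of size $\eps^{-2}\|\bfx^n-\bfx(t_n)\|$ (from $b$ and $\bE$ evaluated at shifted points), while the $\bfx$-error equation sees $\eps^{-1}$ times the $\bfw$-error, so a crude Gronwall estimate diverges like $\exp(cT\eps^{-3/2})$. This is circumvented by passing to co-rotating (filtered) discrete unknowns $\widehat\bfw^n:=(\cR^0\cdots\cR^{n-1})^{-1}\bfw^n$, in which the stiff rotation disappears: the dominant part of the velocity error then manifests as a slowly-modulated phase error, and when re-inserted into the $\bfx$-equation through the $\eps^{-1}$ coupling and summed it recovers one power of $\eps$ from the oscillation; the genuinely slow remainders then satisfy a Gronwall inequality with $\eps$-independent constants. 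Together with stability this gives $\cE_n\le C(\Delta t)^2/\eps^{5}$ — substantive only when $\Delta t$ is below a multiple of $\eps^{5/2}$, and otherwise absorbed by stability.

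\textbf{Uniform estimate $\cE_n\le C(\eps+(\Delta t)^2)$.} Here one routes through the slow systems. By Proposition~\ref{p:cont-x}, $\|(\bfx,\tfrac12\|\bfv\|^2)(t)-(\bfy,g)(t)\|\le C\eps$ on $[0,T]$, with $(\bfy,g)$ solving \eqref{eq:asymp} for $\bfy(0)=\bfx^0$, $g(0)=\tfrac12\|\bfv^0\|^2$; and since \eqref{scheme:asymp} is a non-stiff second-order scheme for \eqref{eq:asymp}, standard arguments give $\|(\bfy^n,g^n)-(\bfy,g)(t_n)\|\le C(\Delta t)^2$ for $\bfy^0=\bfx^0$, $g^0=\tfrac12\|\bfv^0\|^2$, constants depending only on $T$ and sup-norms of $b,\phi$. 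The new ingredient is a \emph{$\Delta t$-uniform} discrete asymptotic estimate $\|(\bfx^n,e^n)-(\bfy^n,g^n)\|\le C\eps$: setting $\bfr^n:=\eps^{-1}\ubw^{n+1/2}+(\bE^\perp/b)(\ubx^{n+1/2})$ and substituting $\ubw^{n+1/2}=\eps\,\bfr^n-\eps(\bE^\perp/b)(\ubx^{n+1/2})$ into the first line of \eqref{scheme:CN}, the $\bfx$-update becomes that of \eqref{scheme:asymp} up to the extra drift $\bfr^n$, the drift term $-\ug^{n+1/2}\nabla^\perp_{\bfx}(1/b)(\uby^{n+1/2})$ modulo the defect $d^n$ and an $O(\Delta t)$ quadrature error; symmetrically the third line of \eqref{scheme:CN} yields a recursion for $\eps\,\bfr^n$ governed by the same orthogonal Cayley transforms $\cR^n$, from which $\bfr^n$ is $O(\eps)$ in the Abel-summed sense that controls $\sum_{k\le n}\Delta t\,\bfr^k$ — the discrete counterpart of the weak convergence $\bfv\rightharpoonup0$ flagged in the introduction, and the reason one must carry the defect $d^n$ since $e^n\neq\tfrac12\|\bfw^n\|^2$. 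Equivalently, one mirrors at the discrete level the change of variables used in Section~\ref{s:continuous} to prove Proposition~\ref{p:cont-x}, turning \eqref{scheme:CN} into a perturbation of \eqref{scheme:asymp}. A coupled Gronwall on $\bfx^n-\bfy^n$ and $e^n-g^n$ then closes with $\eps$-independent constants; chaining the three inequalities and using $\eps\|\bfw^n-\bfv(t_n)\|\le\eps(\|\bfw^n\|+\|\bfv(t_n)\|)\le C\eps$ gives $\cE_n\le C(\eps+(\Delta t)^2)$.

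\textbf{Main obstacle.} The genuinely hard point is the $\Delta t$-uniform discrete asymptotic estimate above — controlling the residual drift $\bfr^n$ for \emph{all} $(\eps,\Delta t)$ rather than only in the regime $\eps\to0$ at fixed $\Delta t$ already treated in \cite{FiRo17,FiRo23,FRT25} — together with the companion oscillation-cancellation bookkeeping that produces $\eps^{-5}$ rather than $\eps^{-7}$ in the classical estimate; both hinge on exploiting the unit-modulus Cayley rotations through discrete summation by parts, precisely where the present analysis departs from the tools of classical geometric numerical integration. By contrast the stability bounds and the convergence of \eqref{scheme:asymp} are routine. This is the discrete analogue of the step that, for the uniform-field scheme, takes \cite{FiRo21} beyond \cite{FiRo16}.
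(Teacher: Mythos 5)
Your two-pronged strategy — prove $\cE_n\le C(\eps+(\Delta t)^2)$ and $\cE_n\le C(\Delta t)^2/\eps^5$ separately, then take the minimum — is exactly the one in Section~\ref{s:main} of the paper. The second half (the $\eps+(\Delta t)^2$ bound) also matches the paper's derivation: it is the triangle inequality across Proposition~\ref{p:cont-x}, a discrete $\eps$-asymptotic estimate (Proposition~\ref{p:disc-x}), the non-stiff convergence of \eqref{scheme:asymp} to \eqref{eq:asymp}, and the uniform bounds on $\bfw^n$, $\bfv$. Your device of writing $\ubw^{n+1/2}$ as $\eps$ times a residual $\bfr^n$ and summing by parts is, in effect, the paper's discrete elimination identity \eqref{scheme:velocity} combined with the telescoping of the $\kappa_{\cL}$-terms in Lemma~\ref{le:disc-1st}; so this part, including the role of the defect $e^n-\tfrac12\|\bfw^n\|^2$, tracks the paper closely even if expressed differently.

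Where your route genuinely diverges — and where it is also the most sketchy — is the stability step of the $(\Delta t)^2/\eps^5$ estimate. You propose passing to co-rotating filtered unknowns $\widehat\bfw^n=(\cR^0\cdots\cR^{n-1})^{-1}\bfw^n$ and recovering a power of $\eps$ from Abel summation of the oscillatory $\eps^{-1}$ fast-to-slow coupling. The paper instead builds the \emph{normal-form} variables $\yGC=\GC+\eps^2\kappa_2(\bfx,\GC,e,\bfw)$ and $\gGC=\eGC+\eps^2\kappa_e(\bfx,\bfw)$ and proves stability (Proposition~\ref{p:stab}) for the weighted quantity $\|(\yGC,\gGC)-(\tyGC,\tgGC)\|+\eps^2\|\bfw-\tbw\|$: the second-order preparation pushes the fast variable's entry into the slow equation down to $O(\eps^2)$, while the $\eps^2$ weight absorbs the $\eps^{-2}$ slow-to-fast Lipschitz constant coming from $b$, giving a Gr\"onwall loop with $\eps$-independent constants. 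This is precisely why Lemmas~\ref{le:disc-3rd} and~\ref{le:disc-1st-bis} (the trilinear and slow-variable-dependent discrete elimination lemmas) are needed, and why the paper carries Proposition~\ref{p:cont-GC}-level preparation even though only $(\bfx,e)$ errors are wanted.

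There is a concrete gap in the filtering argument you outline. The Cayley transforms $\cR^n$ depend on $\ubx^{n+1/2}$, so the two trajectories you want to compare are filtered by \emph{different} products $\cR^0\cdots\cR^{n-1}$ and $\tilde\cR^0\cdots\tilde\cR^{n-1}$; their difference is governed precisely by the $\eps^{-2}$-Lipschitz dependence of $b$ on $\bfx$, which is the stiffness you were trying to avoid. Filtering both solutions by a single reference product only shifts the difficulty into an $O(\Delta t/\eps^2)\cdot|\bfx^n-\tilde\bfx^n|$ phase mismatch, which no longer averages to zero. The paper's normal-form Lyapunov function sidesteps this entirely because it never needs the two oscillations to cancel against each other; it just lowers the order of the stiff coupling before comparing. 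If you want to salvage the filtering route, you would need, in addition, a quantitative estimate on the cumulative phase difference along the two trajectories, and at that point you would likely be re-deriving the $\eps^2$-accuracy of the guiding-center preparation through the back door. As presented, the stability argument is an outline, not a proof, and I would not expect it to close without that extra ingredient.
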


To give a more concrete outcome we point out that
\[
\min\left(\left\{\,\eps+(\Delta t)^2\,;\,\dfrac{(\Delta t)^2}{\eps^5}\,\right\}\right)
\,\leq\,
\begin{cases}
\dfrac{(\Delta t)^2}{\eps^5}, &\textrm{if }\quad\Delta t\leq\eps^3,\quad\emph{i.e.}\quad(\Delta t)^{\frac13}\leq\eps\,,\\
2\,\eps, &\textrm{if }\quad\eps^3\leq\Delta t\leq\eps^{\frac12},\quad\emph{i.e.}\quad (\Delta t)^2\leq\eps\leq (\Delta t)^{\frac13}\,,\\
2\,(\Delta t)^2, &\textrm{if }\quad\eps^{\frac12}\leq\Delta t,\quad\emph{i.e.}\quad \eps\leq (\Delta t)^2\,,
\end{cases}
\]
whereas 
\[
\max_{0<\eps\leq \eps_0}\min\left(\left\{\,\eps+(\Delta t)^2\,;\,\dfrac{(\Delta t)^2}{\eps^5}\,\right\}\right)
\lesssim (\Delta t)^{\frac13}\,.
\]
Note that though the present analysis is more involved than the one of \cite{FiRo21} the outcome is similar to the one obtained there for second-order schemes and, in particular, matches the best expectations rationalized in \cite[Appendix~A]{FiRo21} for asymptotic preserving schemes of general oscillatory equations. Incidentally we stress that there are two main related ways in which our present analysis differ from the one in \cite{FiRo21}, both being responsible for new kinds of discretization consistency errors 
\begin{itemize}
\item the inhomogeneity of the magnetic field forces more chain rule applications in the normal form part of the analysis, this being immaterial at the continuous level but resulting in consistency errors that are due to failure of exact dicrete chain rules and which are denoted with $\tau_\sharp$ letters in discrete elimination lemmas, Lemmas~\ref{le:disc-1st}, \ref{le:disc-2nd}, \ref{le:disc-3rd} and~\ref{le:disc-1st-bis}
\item estimates on $\ue-\tfrac12\|\ubw\|^2$ (see Lemma~\ref{le:disc-cons}) should be thought as discretization consistency bounds and encode that the augmented formulation lets the scheme ensure the smallness of $\ue-\tfrac12\|\ubw\|^2$ when $\Delta t$ is smaller than $\eps^2$ and its concvergence to $\ug$ otherwise. 
\end{itemize} 

The $\eps+(\Delta t)^2$ part of the bound in Theorem~\ref{th:main} arises from 
\begin{itemize}
\item the $\eps$ proximity of $(\bfx,\tfrac12\|\bfv\|^2)(n\,\Delta t)$ and $(\bfy,g)(n\,\Delta t)$ (stated in Proposition~\ref{p:cont-x})
\item its discrete counterpart, the $\eps$ proximity of $(\bfx^n,e^n)$ and $(\bfy^n,g^n)$ (stated in Proposition~\ref{p:disc-x})
\item the $(\Delta t)^2$ proximity of $(\bfy,g)(n\,\Delta t)$ and $(\bfy^n,g^n)$ which results from the fact that \eqref{scheme:asymp} is a second-order scheme for \eqref{eq:asymp}.
\end{itemize}
In turn the $(\Delta t)^2\,\eps^{-5}$ part is a "direct" convergence analysis encoding how the stiffness impacts the second-order convergence of \eqref{scheme:CN} to \eqref{eq:aug} and the smallness of $\ue-\tfrac12\|\ubw\|^2$. Let us stress however that even the direct convergence analysis requires a fine understanding of the $\eps$ asymptotics so as to tame the stiffness impact in the stability part of the convergence analysis. A rough stability analysis would provide a bound exhibiting a dramatic exponential growth as a power of $1/\eps^2$.

In particular, even in the direct convergence part, our stability analysis involves the famous guiding-center variables $(\GC,\eGC)$,
$$
\left\{
  \begin{array}{l}
 \ds\GC :=\bfx-\,\eps\,\dfrac{\bfw^{\perp}}{b(\bfx)}\,,\\[0.8em]
\ds\eGC :=e\,+\,\dfrac{\eps}{b(\bfx)}\,\bE^\perp(\bfx)\cdot\bfw\,.
  \end{array}
\right.
$$
These variables are $\eps$ close to $(\bfx,e)$ but obey a slower dynamics. Indeed they remain $\eps^2$ close to a suitable solution of \eqref{eq:asymp}; see Proposition~\ref{p:cont-GC}. To keep our analysis as short as possible we have decided not include a convergence analysis of those. But we refer to \cite{FiRo21} for an example of the latter and we point out that our numerical tests also show a better capture of $(\GC,\eGC)$ following the rate
\[
\min\left(\left\{\,\eps^2+(\Delta t)^2\,;\,\dfrac{(\Delta t)^2}{\eps^4}\,\right\}\right)
\,\leq\,
\begin{cases}
\dfrac{(\Delta t)^2}{\eps^4}&\textrm{if }\quad\Delta t\leq\eps^3
\qquad\quad\emph{i.e.}\quad(\Delta t)^{\frac13}\leq\eps\,,\\
2\,\eps^2&\textrm{if }\quad\eps^3\leq\Delta t\leq\eps
\quad\emph{i.e.}\quad \Delta t\leq\eps\leq(\Delta t)^{\frac13}\,,\\
2\,(\Delta t)^2&\textrm{if }\quad\eps\leq\Delta t\,,
\end{cases}
\]
which is consistant with the general discussion in \cite[Appendix~A]{FiRo21}. Note that 
\[
\max_{0<\eps\leq \eps_0}\min\left(\left\{\,\eps^2+(\Delta t)^2\,;\,\dfrac{(\Delta t)^2}{\eps^4}\,\right\}\right)
\lesssim (\Delta t)^{\frac23}\,.
\]

\subsection*{Numerical tests}

We conclude this introduction with numerical experiments confirming the sharpness of our estimates. In our experiments we choose
\begin{align*}
b(\bfx)&=\,\dfrac{10}{\sqrt{100-\| \bfx \|^{2}}}\,,&
\phi(\bfx)&=\,\dfrac12\,\|\bfx\|^2\,.
\end{align*}
Note that neither $b$ nor $\phi$ meet the assumptions of our results since $b$ is unbounded and not lower bounded away from zero and $\phi$ is not bounded with bounded derivatives. Their treatments would require slightly more elaborate arguments. Yet the outcome would be similar, as confirmed by numerical tests.

In the present experiments we fix the final time $T=1$, the initial position to be $\bfx^{0} = (2,2)$ and the initial velocity to be $\bfv^{0} = (3,3)$. Moreover we constrain $\Delta t$ to be such that $N_{\Delta t}:=T/(\Delta t)$ is an integer and measure sizes of vectors $(a_n)_{1\leq n\leq N_{\Delta t}}$ through the discrete $\ell^1$ norm
\[
\vvvert (a_n)_{1\leq n\leq N_{\Delta t}}\vvvert_{\Delta t}:= 
\dfrac{1}{N_{\Delta t}}\sum_{n=1}^{N_{\Delta t}}\|a_n\|\,.
\]

\begin{figure}
        \centering
        {\includegraphics[width=0.49\linewidth]{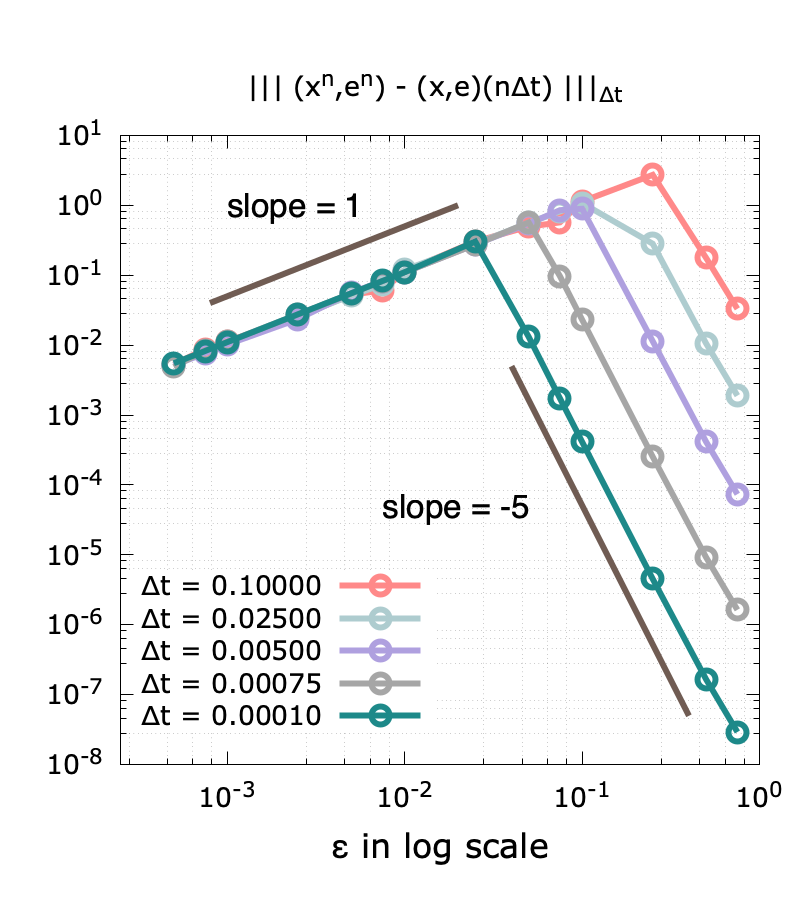}}
        {\includegraphics[width=0.49\linewidth]{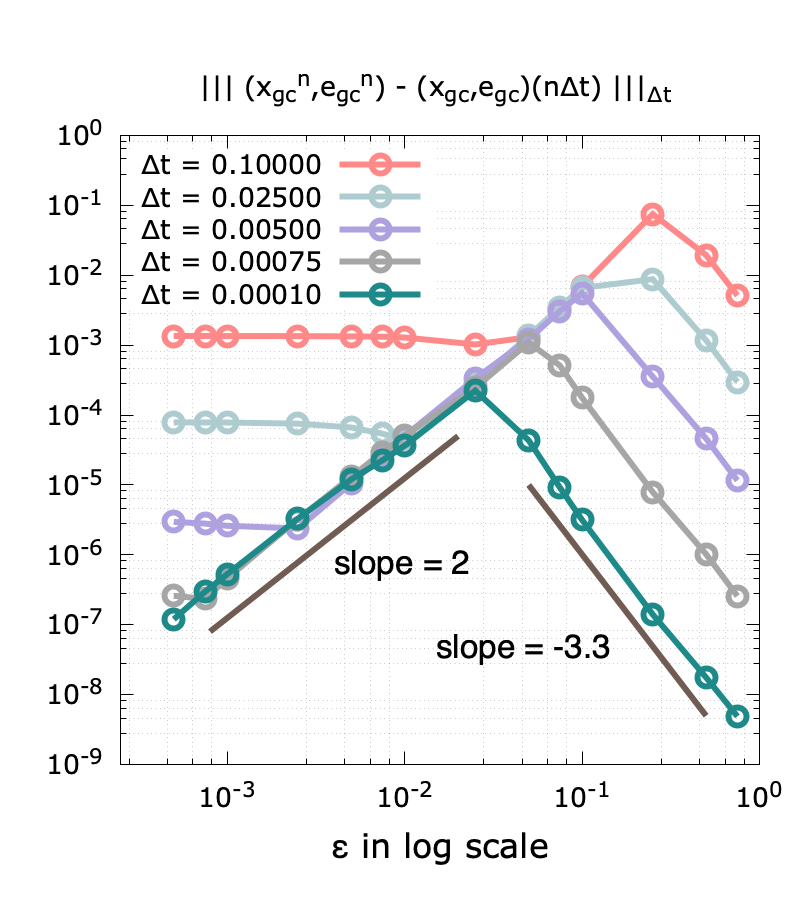}}
      	\caption{Convergence errors, that is, comparison of the solution to the scheme \eqref{scheme:CN} with the solution to \eqref{eq:ODE}. (Left) Comparison of the original slow variables $(\bfx,e)$. (Right) Comparison of their guiding center modifications $(\GC,\eGC)$.}
	\label{Fig:convergence}
\end{figure}

\begin{figure}
        \centering
        {\includegraphics[width=0.49\linewidth]{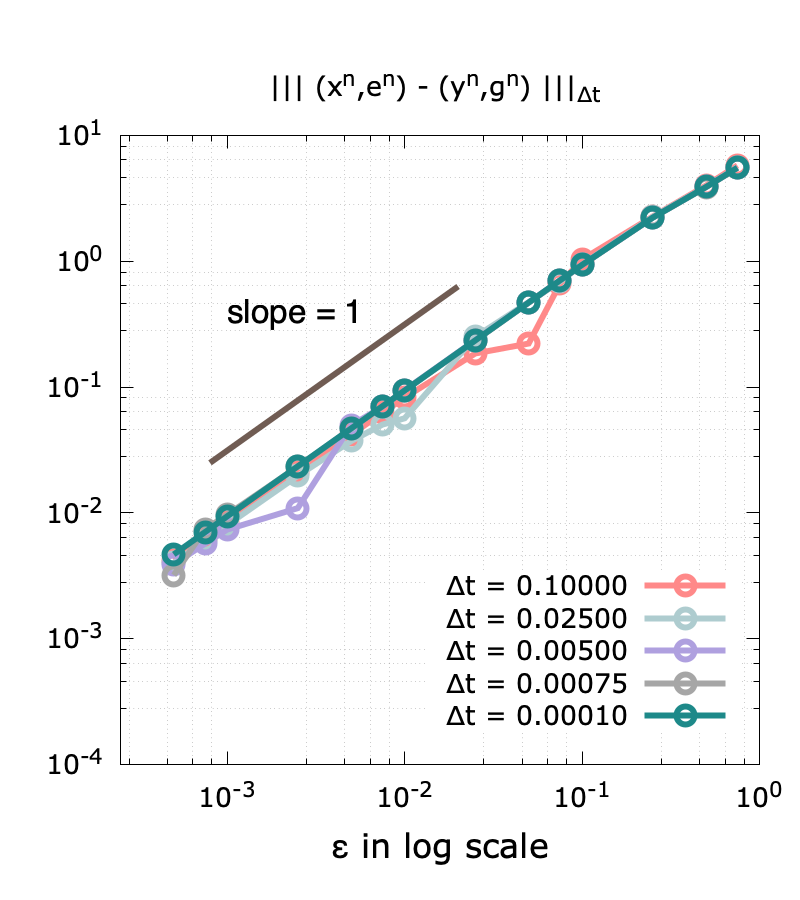}}
        {\includegraphics[width=0.49\linewidth]{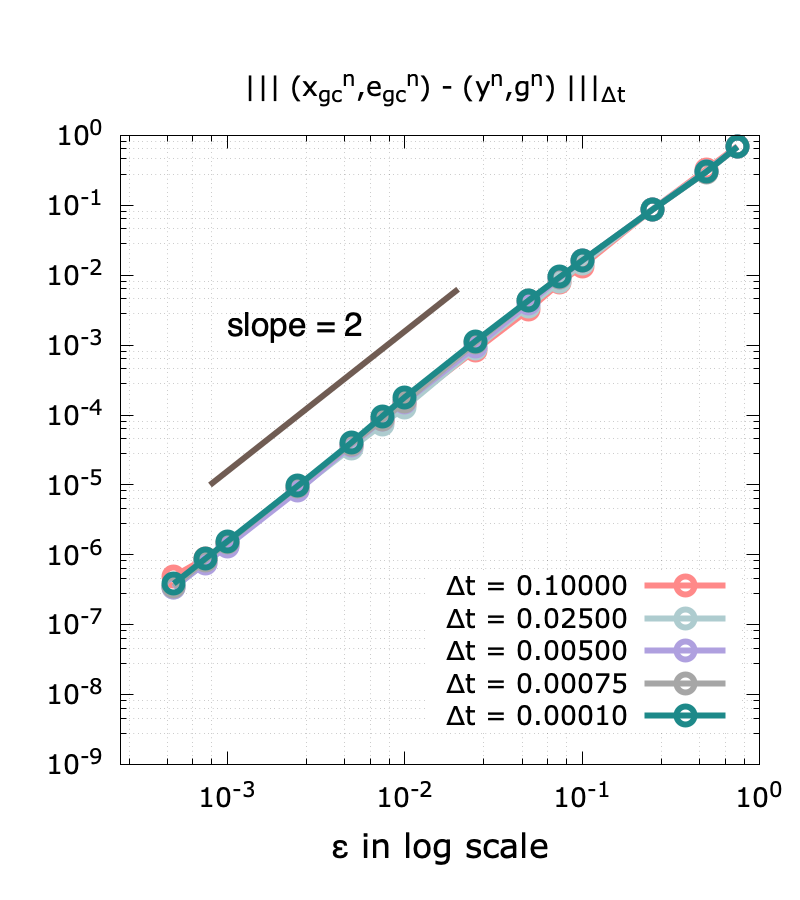}}
      	\caption{Discrete $\eps$ asymptotic errors, that is, comparison of the solution to the scheme \eqref{scheme:CN} with solutions to \eqref{scheme:asymp}. (Left) Comparison of the original slow variables $(\bfx,e)$. (Right) Comparison of their guiding center modifications $(\GC,\eGC)$. Note that initial data for \eqref{scheme:asymp} are set respectively to $(\bfx^0,e^0)$ for the computation of the left panel and to $(\GC^0,\eGC^0)$ for the computation of the right one.}
	\label{Fig:asymp-disc}
\end{figure}

\begin{figure}
        \centering
        {\includegraphics[width=0.49\linewidth]{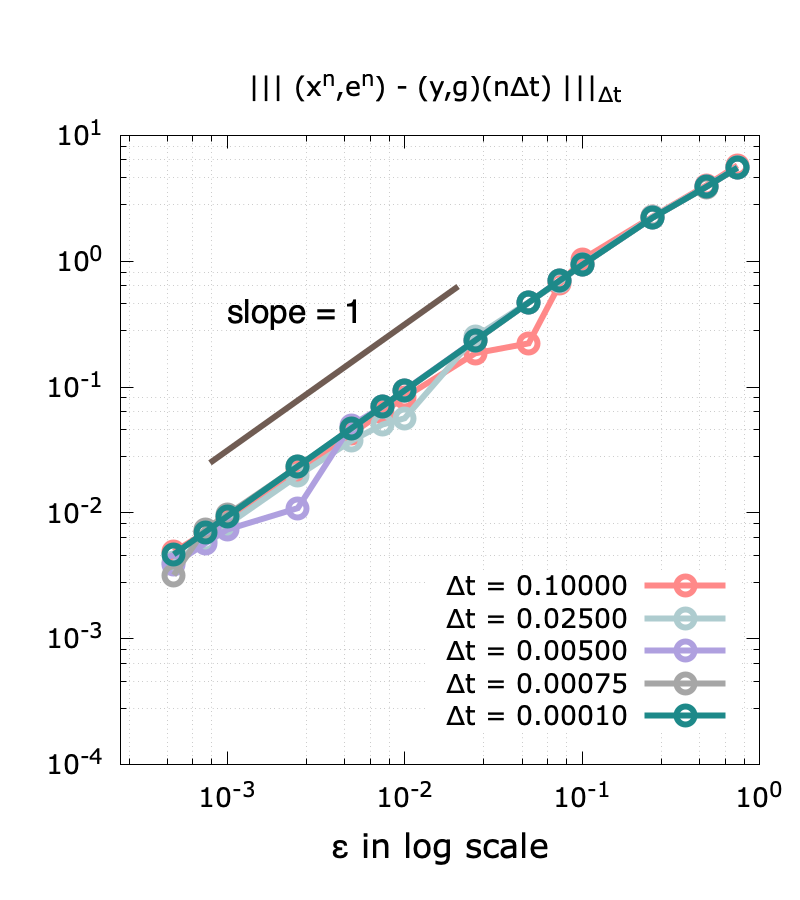}}
        {\includegraphics[width=0.49\linewidth]{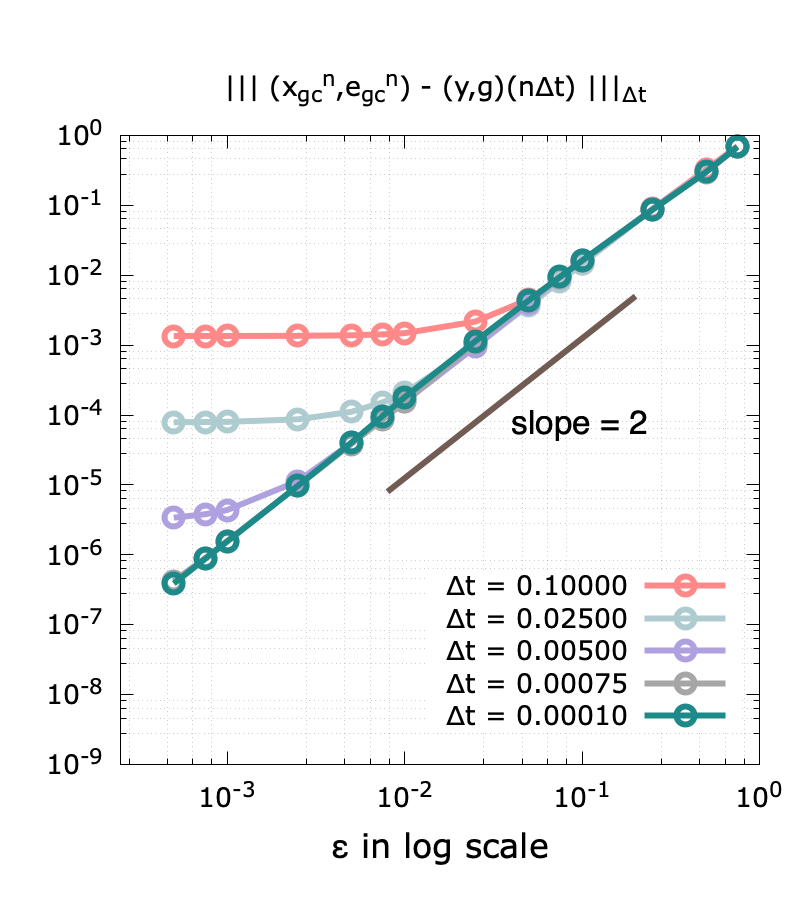}}
      	\caption{Continuous $\eps$ asymptotic errors, that is, comparison of the solution to the scheme \eqref{scheme:CN} with solutions to \eqref{eq:asymp}. (Left) Comparison of the original slow variables $(\bfx,e)$. (Right) Comparison of their guiding center modifications $(\GC,\eGC)$. Note that initial data for \eqref{scheme:asymp} are set respectively to $(\bfx^0,e^0)$ for the computation of the left panel and to $(\GC^0,\eGC^0)$ for the computation of the right one.}
	\label{Fig:asymp-cont}
\end{figure}

Numerical tests match quite closely theoretical bounds. For the sake of comparison, let us recall those here
\begin{itemize}
\item Figure~\ref{Fig:convergence}, (Left) $\min\left(\left\{\,\eps+(\Delta t)^2\,;\,(\Delta t)^2\,\eps^{-5}\,\right\}\right)$,\qquad
(Right) $\min\left(\left\{\,\eps^2+(\Delta t)^2\,;\,(\Delta t)^2\,\eps^{-4}\,\right\}\right)$,
\item Figure~\ref{Fig:asymp-disc}, (Left) $\eps$,\qquad (Right) $\eps^2$,
\item Figure~\ref{Fig:asymp-cont}, (Left) $\eps+(\Delta t)^2$,\qquad (Right) $\eps^2+(\Delta t)^2$\,.
\end{itemize}
There are two departures worth commenting. On one hand, some of the regimes of the bounds reachable only with very small $\eps$ are unseen. For instance this is the case of the $(\Delta t)^2$ regime for the bound on the convergence error in $(\bfx,e)$, which would be apparent if we were reaching the $\eps \lesssim (\Delta t)^2$ regime. This numerical limitation of our experiments is due to the fact that computing reference solutions with very small $\eps$ comes at a prohibitive price. On the other hand, for the convergence error in $(\GC,eGC)$, we measure $(\Delta t)^2\,\eps^{-3.3}$ in the regime where we should see $(\Delta t)^2\,\eps^{-4}$. This seems to be a purely numerical artifact.


\subsection*{Outline}

After the present introduction the rest of the paper is organized as follows. In the next section we prove $\eps$ asymptotics at the continuous level. After that, we begin the discrete analysis and provide at the discrete level, in this order, uniform bounds, $\eps$ asymptotics and discrete error bounds. Then we combine those to prove Theorem~\ref{th:main}. At last, in an appendix we prove that the implicit scheme may be solved without stiff constraint on the discretization.


\medskip

\noindent\emph{Acknowledgment.} KHT expresses his appreciation of the hospitality of IMT, Universit\'e Toulouse~III, during the preparation of the present contribution. 

\section{Continuous analysis}\label{s:continuous}

Our continuous analysis follows closely the strategy of \cite{FiRo20}: first prove uniform bounds then investigate the $\eps$ asymptotics by tracking how multilinear expressions in the fast variable may be split into a slow part and a smaller fast part. However, in contrast with the analyses of \cite{FiRo20}, the restriction made on the structure provides readily relevant uniform bounds at the continuous level. Indeed from the conservation of energy stems that solutions to \eqref{eq:ODE} satisfy for any $t$ and any $\eps>0$
\[
\|\bfv(t)\|
\leq \sqrt{ \|\bfv^0\|^2+4\,\|\phi\|_{L^\infty}}
\leq \|\bfv^0\|+2\,\sqrt{\|\phi\|_{L^\infty}}\,.
\]

\subsection{Original variables}\label{s:cont-x}

With the uniform bounds in hands we may turn to the elimination process, aiming at a normal form where slow and fast variables are sufficiently uncoupled. The keystone of the uncoupling is
\begin{align} \label{eq:velocity}
\bfv\, = \, \eps^{2} \, \dfrac{\dD}{{\dD}t} \left( \dfrac{\bfv^{\perp}}{b(\bfx)} \right) \,-\, \eps \,\dD_{\bfx}\left(\dfrac1b\right)(\bfx)(\bfv)\,\bfv^{\perp} - \eps \dfrac{\bE^{\perp}}{b}(\bfx)\,,
\end{align}
which is readily derived from \eqref{eq:ODE}. To begin with, inserting \eqref{eq:velocity} back into the first equation of \eqref{eq:ODE} yields
\begin{align}\label{eq:cont-x1}
\dfrac{\dD}{{\dD}t} \left( \bfx- \eps\, \dfrac{\bfv^{\perp}}{b(\bfx)} \right) \,=\,  \,-\,\dD_{\bfx}\left(\dfrac1b\right)(\bfx)(\bfv)\,\bfv^{\perp} -\dfrac{\bE^{\perp}}{b}(\bfx)\,.
\end{align}

At this stage, in order to proceed with the uncoupling, we need to eliminate quadratic terms. Before doing so it is instructive to slightly generalize the elimination of $\bfv$ encoded by \eqref{eq:velocity} as elimination of linear terms. Combining \eqref{eq:velocity} with direct differentiation proves the following lemma.

\begin{lemma} \label{le:cont-1st}
Consider $\bL$ a differentiable function of time with values in linear maps from $\R^2$ to some $\R^n$, $n\in\N^*$. Then solutions to \eqref{eq:ODE} satisfy
\begin{equation*}
\bL(t)(\bfv(t)) \, = \,-\eps^{2}\dfrac{\dD \kappa_{\bL}(\cdot,\bfx,\bfv)}{{\dD}t}(t) \,+\,\eps\,\eta_{\bL}(t,\bfx(t),\bfv(t)) \,,
\end{equation*}
where
\begin{align*} 
\kappa_{\bL}(t, \bfx, \bfv) & \,:=\,-\dfrac{1}{b(\bfx)}\bL(t)(\bfv^{\perp})\,,\\
\end{align*}
and
\begin{align*} 
  \eta_{\bL}(t, \bfx, \bfv) &\,:=\, -\,\dD_{\bfx}\left(\dfrac1b\right)(\bfx)(\bfv)\,\bL(t)(\bfv^{\perp})
-\dfrac{1}{b(\bfx)}\,\bL(t)(\bE^{\perp}(\bfx))
-\frac{\eps}{b(\bfx)}\,\dfrac{\dD \bL}{{\dD}t}(t)(\bfv^{\perp})\,.
\end{align*}
\end{lemma}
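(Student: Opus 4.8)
The plan is to derive the identity by treating $\bL(t)(\bfv)$ componentwise and invoking the scalar identity \eqref{eq:velocity} in each slot. Writing $\bL(t) = (\ell_1(t)^{\sf T}, \dots, \ell_n(t)^{\sf T})$ with each $\ell_j(t) \in \R^2$, we have $\bL(t)(\bfv) = (\langle \ell_1(t), \bfv\rangle, \dots)$, so it suffices to handle a single linear form $\bfu \mapsto \langle \ell, \bfu\rangle$ and then reassemble. Applying $\ell_j(t)$ to both sides of \eqref{eq:velocity} and using that $\ell_j$ is a fixed linear functional that commutes with $\eps^2 \,\dD/\dD t$ up to the term where the time derivative hits $\ell_j$ itself, one gets
\[
\ell_j(t)(\bfv) = \eps^2 \dfrac{\dD}{\dD t}\!\left( \dfrac{\ell_j(t)(\bfv^\perp)}{b(\bfx)} \right) - \eps^2\,\dfrac{\dD \ell_j}{\dD t}(t)\!\left(\dfrac{\bfv^\perp}{b(\bfx)}\right) - \eps\, \dD_\bfx\!\left(\dfrac1b\right)(\bfx)(\bfv)\,\ell_j(t)(\bfv^\perp) - \eps\,\dfrac{\ell_j(t)(\bE^\perp(\bfx))}{b(\bfx)}\,.
\]
Collecting the first term into $-\eps^2 \dfrac{\dD \kappa_\bL}{\dD t}$ with $\kappa_\bL(t,\bfx,\bfv) = -\dfrac{1}{b(\bfx)}\bL(t)(\bfv^\perp)$, and collecting the remaining three terms into $\eps\,\eta_\bL$, matches exactly the claimed formulas for $\kappa_\bL$ and $\eta_\bL$ (the $-\eps^2 \,(\dD\ell_j/\dD t)(\bfv^\perp/b)$ term becomes the $-\tfrac{\eps}{b(\bfx)}\tfrac{\dD\bL}{\dD t}(t)(\bfv^\perp)$ contribution inside $\eps\,\eta_\bL$).

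The mechanical heart is really just re-deriving \eqref{eq:velocity} itself, which the excerpt says follows "readily" from \eqref{eq:ODE}: from the second equation of \eqref{eq:ODE}, $b(\bfx)\bfv^\perp/\eps = \bE(\bfx) - \eps\, \dD\bfv/\dD t$, hence $\bfv^\perp = \eps\,\bE(\bfx)/b(\bfx) - \eps^2 (\dD\bfv/\dD t)/b(\bfx)$; rotating by $-\tfrac{\pi}{2}$ (using $(\bfw^\perp)^\perp = -\bfw$) and rewriting $\eps^2 (\dD\bfv/\dD t)^\perp/b(\bfx) = \eps^2 \dfrac{\dD}{\dD t}(\bfv^\perp/b(\bfx)) - \eps^2 \bfv^\perp\,\dfrac{\dD}{\dD t}(1/b(\bfx))$ and expanding the last time derivative via the chain rule with $\eps\,\dD\bfx/\dD t = \bfv$ produces \eqref{eq:velocity}. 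I would present \eqref{eq:velocity} as given and move directly to the lemma.

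I do not anticipate a genuine obstacle here; the only thing requiring minor care is bookkeeping the term where $\dD/\dD t$ lands on $\bL$ rather than on $\bfv^\perp/b(\bfx)$, since that is what accounts for the slightly asymmetric-looking last summand of $\eta_\bL$ (it carries an extra $\eps$ relative to the other two summands because one power of $\eps$ has been pulled out front into the $\eps\,\eta_\bL$ prefactor while the term itself was order $\eps^2$). One should also note that $\kappa_\bL$ and $\eta_\bL$ inherit the smoothness needed for later use: $\kappa_\bL$ is as smooth in $(\bfx,\bfv)$ as $b^{-1}$ and $\bL$, and $\eta_\bL$ involves one extra $\bfx$-derivative of $b^{-1}$ and the field $\bE$, all bounded by the standing assumptions, so that when this lemma is iterated in the elimination scheme the emerging quantities stay controlled uniformly in $\eps$ on $[0,T]$ thanks to the energy bound $\|\bfv(t)\| \leq \|\bfv^0\| + 2\sqrt{\|\phi\|_{L^\infty}}$.
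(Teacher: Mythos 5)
Your proposal is correct and follows essentially the same route the paper intends: apply $\bL(t)$ to the identity \eqref{eq:velocity}, use the product rule to move $\bL(t)$ inside the $\eps^2\,\dD/\dD t$, which produces the extra $-\eps^2\,(\dD\bL/\dD t)(\bfv^\perp/b)$ term absorbed into $\eps\,\eta_\bL$, and read off $\kappa_\bL$ and $\eta_\bL$. The componentwise decomposition into scalar forms $\ell_j$ is harmless but unnecessary, since the same manipulation applies directly to the $\R^n$-valued map $\bL$.
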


Above, to derive \eqref{eq:cont-x1}, we have just used the case when $\bL$ is constant equal to the identity map.

Quadratic terms are not eliminated in the same way as linear terms are. They leave slow contributions. The starting observation in this direction is that if $\bB$ is a bilinear form on $\R^2$ then for any vector $\bfw$
\begin{equation}\label{eq:trace}
\bB(\bfw,\bfw)+\bB(\bfw^\perp,\bfw^\perp)\,=\,\|\bfw\|^2\,\Tr(\bB)\,,
\end{equation}
where $\Tr$ denotes the trace operator. Note also that for solutions to \eqref{eq:ODE}, $e:=\frac12\|\bfv\|^2$ is exactly as slow as $\bfx$ is, as may deduced from conservation of energy.

The following lemma provides a quadratic counterpart to Lemma~\ref{le:cont-1st}

\begin{lemma} \label{le:cont-2nd}
Consider $\bB$ a differentiable function of time with values in bilinear maps from $\R^2\times\R^2$ to some $\R^n$, $n\in\N^*$. Then solutions to \eqref{eq:ODE} satisfy
\begin{equation*}
\bB(t)(\bfv(t),\bfv(t)) \, = \,
e(t)\,\Tr(\bB(t))
\,-\,\eps^{2}\dfrac{\dD \kappa_{\bB}(\cdot,\bfx,\bfv)}{{\dD}t}(t) \,+\,\eps\,\eta_{\bB}(t,\bfx(t),\bfv(t)) \,,
\end{equation*}
where
\begin{align*} 
\kappa_{\bB}(t, \bfx, \bfv) & \,:=\,-\frac12\,\dfrac{1}{b(\bfx)}\bB(t)(\bfv,\bfv^{\perp})\,,\\
\end{align*}
and
\begin{align*} 
  \eta_{\bB}(t, \bfx, \bfv) &\,:=\, -\,\frac12\,\dD_{\bfx}\left(\dfrac1b\right)(\bfx)(\bfv)\,\bB(t)(\bfv,\bfv^{\perp})
-\frac12\frac{\eps}{b(\bfx)}\,\dfrac{\dD \bB}{{\dD}t}(t)(\bfv,\bfv^{\perp})\\
&\qquad-\frac12\dfrac{1}{b(\bfx)}\,\left(\bB(t)(\bfv,\bE^{\perp}(\bfx))+\bB(t)(\bE(\bfx),\bfv^{\perp})\right)\,.
\end{align*}
\end{lemma}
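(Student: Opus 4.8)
The plan is to derive the claimed identity by the same mechanism used for Lemma~\ref{le:cont-1st}, only now carrying along the algebraic identity \eqref{eq:trace} to absorb the irreducible slow remainder. Write $\bfv=\bfv^{\perp\perp}\cdot(-1)$... more usefully, start from \eqref{eq:velocity} applied in each slot of the bilinear form. The heart of the computation is the observation that if we substitute \eqref{eq:velocity} for the \emph{first} argument of $\bB(t)(\cdot,\bfv)$ we produce a term $\eps^2\,\tfrac{\dD}{\dD t}\!\left(\tfrac1{b(\bfx)}\bfv^{\perp}\right)$ in that slot; moving the time derivative outside via the product rule yields $\eps^2\tfrac{\dD}{\dD t}\!\left(\tfrac1{b(\bfx)}\bB(t)(\bfv^{\perp},\bfv)\right)$ plus $O(\eps^2)$ correction terms coming from differentiating $\bB(t)$ and from the second (undifferentiated) slot still carrying a $\bfv$, the latter contributing a factor $\tfrac{\dD\bfv}{\dD t}=O(\eps^{-1})\cdot(\ldots)$, hence of size $\eps$. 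So a naive one-slot elimination would leave a genuinely $O(1)$ term $-\tfrac1{b(\bfx)}\bB(t)(\bE^{\perp}(\bfx),\bfv)$ (from the $\bE^\perp/b$ piece of \eqref{eq:velocity}) times $\bfv$, which is not slow.

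The fix, and the reason the factor $\tfrac12$ and the symmetrized expression $\bB(t)(\bfv,\bfv^{\perp})$ appear in $\kappa_{\bB}$, is to symmetrize: perform the substitution \eqref{eq:velocity} in \emph{both} slots and average the two resulting expressions. When one does this the quadratic-in-$\bfv$ leftover that cannot be written as a total $\eps^2$-derivative or as an $\eps$-small term is exactly $\tfrac12\left(\bB(t)(\bfv^{\perp\perp},\bfv^{\perp\perp})+\ldots\right)$ — more precisely one is left with a term proportional to $\bB(t)(\bfv,\bfv)+\bB(t)(\bfv^{\perp},\bfv^{\perp})$, which by \eqref{eq:trace} equals $\|\bfv\|^2\,\Tr(\bB(t))=2\,e(t)\,\Tr(\bB(t))$. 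This is precisely where the slow variable $e=\tfrac12\|\bfv\|^2$ enters, and where the hypothesis that $e$ is as slow as $\bfx$ (from energy conservation) is used to legitimize calling this remainder "slow". All other terms generated in the symmetrized substitution fall into one of two classes: (i) a total $\eps^2$-derivative, which is collected into $-\eps^2\,\tfrac{\dD}{\dD t}\kappa_{\bB}$ with $\kappa_{\bB}(t,\bfx,\bfv)=-\tfrac12\tfrac1{b(\bfx)}\bB(t)(\bfv,\bfv^{\perp})$; (ii) terms of size $\eps$, collected into $\eps\,\eta_{\bB}$. Matching the three sources of $\eps$-terms in \eqref{eq:velocity} — the $\dD_\bfx(1/b)(\bfx)(\bfv)\,\bfv^\perp$ piece, the $\eps$-derivative of $\bB(t)$ hitting the product rule, and the $\bE^\perp/b$ piece once it has been symmetrized and re-expressed — gives exactly the three lines of the stated formula for $\eta_{\bB}$, the last of which is $-\tfrac12\tfrac1{b(\bfx)}\bigl(\bB(t)(\bfv,\bE^{\perp}(\bfx))+\bB(t)(\bE(\bfx),\bfv^{\perp})\bigr)$; note the appearance of $\bE$ and $\bE^\perp$ in the two slots is the tell-tale sign of the symmetrization.

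Concretely I would write $2\,\bB(t)(\bfv,\bfv)=\bB(t)(\bfv,\bfv)+\bB(t)(\bfv,\bfv)$ and replace the left factor in the first copy and the right factor in the second copy by the right-hand side of \eqref{eq:velocity}, then expand, invoke \eqref{eq:trace} on the quadratic-in-$\bfv$ residue, pull all total derivatives outside using the Leibniz rule, and bookkeep the rest. The main obstacle is purely organizational rather than conceptual: one must be careful that the cross term where \eqref{eq:velocity} contributes its $\eps^2\tfrac{\dD}{\dD t}(\bfv^\perp/b)$ in one slot while the \emph{other} slot still holds a bare $\bfv$ does not spoil the accounting — differentiating that bare $\bfv$ via \eqref{eq:ODE} reproduces $\bE(\bfx)/\eps-b(\bfx)\bfv^\perp/\eps^2$, so the $\eps^2$ in front renders the $\bfv^\perp$-term of size $\|\bfv\|^2$ and $O(1)$, not small — but this term, too, is quadratic in $\bfv$ of the form $\bB'(t)(\bfv,\bfv^\perp)$-type after using $b(\bfx)\bfv^\perp$, and one checks it is swallowed into $\kappa_{\bB}$ exactly because $\kappa_{\bB}$ already contains the factor $1/b(\bfx)$ designed to cancel it. Tracking this cancellation — i.e., checking that the $\eps^{-2}$-stiff part of $\tfrac{\dD\bfv}{\dD t}$ is annihilated by the $1/b(\bfx)$ weight in $\kappa_{\bB}$, leaving only the $\bE(\bfx)/\eps$ contribution which is of the claimed size $\eps$ — is the one computation in the proof that genuinely needs care; everything else is routine differentiation and collection of terms already anticipated by the statement.
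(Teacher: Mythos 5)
Your overall plan---substitute \eqref{eq:velocity} into $\bB(\bfv,\bfv)$ and absorb the irreducible quadratic residue using \eqref{eq:trace}---is the right idea, but your diagnosis of what goes wrong with a single substitution is incorrect, and your proposed two-slot ``symmetrization'' does \emph{not} reproduce the stated $\kappa_{\bB}$ and $\eta_{\bB}$.

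First, the obstruction to a one-slot elimination is misidentified. The $\bE^{\perp}/b$ piece of \eqref{eq:velocity} carries an explicit factor $\eps$, so $-\eps\,b^{-1}\bB(\bfv,\bE^{\perp}(\bfx))$ is $O(\eps)$ and slots harmlessly into $\eta_{\bB}$; it is not an $O(1)$ leftover. Likewise, you assert $\tfrac{\dD\bfv}{\dD t}=O(\eps^{-1})$; in fact $\tfrac{\dD\bfv}{\dD t}=\eps^{-1}\bE(\bfx)-\eps^{-2}b(\bfx)\bfv^{\perp}$ contains an $O(\eps^{-2})$ piece. The genuine $O(1)$ residue is precisely this piece hitting the product-rule cross term: with \eqref{eq:velocity} in, say, the second slot, pulling the $\eps^2\tfrac{\dD}{\dD t}$ outside $b^{-1}\bB(\bfv,\bfv^{\perp})$ produces $-\eps^2\bB(\tfrac{\dD\bfv}{\dD t},\bfv^{\perp}/b)$, whose stiff part is exactly $+\bB(\bfv^{\perp},\bfv^{\perp})$. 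That is the term to be absorbed by \eqref{eq:trace}: $\bB(\bfv^{\perp},\bfv^{\perp})=2e\,\Tr(\bB)-\bB(\bfv,\bfv)$, which moves a $-\bB(\bfv,\bfv)$ to the left-hand side; the factor $\tfrac12$ in $\kappa_{\bB}$ and $\eta_{\bB}$ comes from dividing $2\bB(\bfv,\bfv)$ by two, not from averaging two substitutions. A single substitution (in the second slot, consistent with the $\bB(\bfv,\bfv^{\perp})$ appearing in the stated $\kappa_{\bB}$) therefore already closes the argument.

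Second, your two-slot averaging does yield a valid identity, but not the one stated: it produces $\kappa_{\bB}\propto\bB(\bfv^{\perp},\bfv)+\bB(\bfv,\bfv^{\perp})$ and a correspondingly symmetrized $\eta_{\bB}$, which agree with the stated formulas only when $\bB$ is symmetric. The form $\cB_1$ to which the lemma is applied in Section~\ref{s:cont-x} is \emph{not} symmetric, so you would not recover the objects the paper actually uses downstream. The claim in your third paragraph that the stiff cross term ``is swallowed into $\kappa_{\bB}$'' is also not what happens: $\kappa_{\bB}$ contains only the total-derivative part; the stiff residue is an $O(1)$ leftover that lives outside $\kappa_{\bB}$ and must be removed by \eqref{eq:trace}. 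Carrying out the one-slot computation and identifying each term with the stated $\kappa_{\bB}$ and $\eta_{\bB}$ (in particular, checking which slot produces $\bB(\bfv,\bfv^{\perp})$ rather than $\bB(\bfv^{\perp},\bfv)$) is the part of the proof that remains to be done.
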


\begin{proof}
A direct computation relying again on \eqref{eq:velocity} shows 
\begin{equation*}
\frac12\bB(t)(\bfv(t),\bfv(t))-\frac12\bB(t)(\bfv(t)^\perp,\bfv(t)^\perp)\, = \,
-\eps^{2}\dfrac{\dD \kappa_{\bB}(\cdot,\bfx,\bfv)}{{\dD}t}(t) \,+\,\eps\,\eta_{\bB}(t,\bfx(t),\bfv(t)) \,,
\end{equation*}
whereas \eqref{eq:trace} implies
\[
\bB(t)(\bfv(t),\bfv(t))\,=\,
e(t)\,\Tr(\bB(t))+\frac12\bB(t)(\bfv(t),\bfv(t))-\frac12\bB(t)(\bfv(t)^\perp,\bfv(t)^\perp)\,.
\]
\end{proof}

Coming back to \eqref{eq:cont-x1} we apply Lemma~\ref{le:cont-2nd} with $\bB_1(t):=\cB_1(\bfx(t))$
\[
\cB_1(\bfx)(\bfw_1,\bfw_2):=
-\dD_{\bfx}\left(\dfrac1b\right)(\bfx)(\bfw_1)\,\bfw_2^{\perp}
\]
and observe that for this bilinear form setting $\bfw=\nabla_{\bfx}(b^{-1})(\bfx(t))$ in \eqref{eq:trace} yields
\[
\Tr(\bB_1(t))\,=\,-\nabla_{\bfx}^\perp\left(\dfrac1b\right)(\bfx(t))\,.
\]
Therefore solutions to \eqref{eq:ODE} satisfy
\begin{align}\label{eq:cont-x2}
\dfrac{\dD}{{\dD}t} \left( \bfx\,+\,\eps\,\kappa_1(\bfx,\bfv)
\right) \,=\,-e\,\nabla_{\bfx}^\perp\left(\dfrac1b\right)(\bfx)-\dfrac{\bE^{\perp}}{b}(\bfx)
+\eps\,\eta_1(\bfx,\bfv)\,,
\end{align}
with
\begin{align*} 
\kappa_1(\bfx, \bfv) & \,:=\,-\,\dfrac{\bfv^{\perp}}{b(\bfx)}-\,\frac{\eps}{2}\,\dfrac{1}{b(\bfx)}\cB_1(\bfx)(\bfv,\bfv^{\perp})\,,\\
\end{align*}
and
\begin{align*}
  \eta_1(\bfx, \bfv) &\,:=\,
\frac14\,\dD_{\bfx}^2\left(\dfrac{1}{b^2}\right)(\bfx)(\bfv,\bfv)\,\bfv
-\frac12\dfrac{1}{b(\bfx)}\,\left(\cB_1(\bfx)(\bfv,\bE^{\perp}(\bfx))+\cB_1(\bfx)(\bE(\bfx),\bfv^{\perp})\right)\,.
\end{align*}

We have reached the stage when we may derive the $\eps$ asymptotic behavior of $(\bfx,e)$.

\begin{proposition}\label{p:cont-x}
Let $T>0$, $M>0$ and $\eps_0>0$. There exists $C>0$ such that when $0<\eps\leq\eps_0$ if $(\bfx,\bfv)$ solves \eqref{eq:ODE} with initial datum $(\bfx^0,\bfv^0)$ such that $\|\bfv^0\|\leq M$ and $(\bfy,g)$ solves \eqref{eq:asymp} with initial datum $(\bfy^0,g^0)=(\bfx^0,\tfrac12\|\bfv^0\|^2)$ then with $e:=\tfrac12\|\bfv\|^2$ for any $0\leq t\leq T$,
\[
\|(\bfx(t),e(t))-(\bfy(t),g(t))\|\leq C\,\eps\,.
\]
\end{proposition}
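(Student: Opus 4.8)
The plan is to set up a Gronwall argument for the difference between the augmented slow pair coming from the exact solution and the solution of the limiting system \eqref{eq:asymp}, using \eqref{eq:cont-x2} as the key reformulation. First I would introduce the modified position $\tbx := \bfx + \eps\,\kappa_1(\bfx,\bfv)$; by \eqref{eq:cont-x2} it satisfies an evolution equation whose right-hand side is, up to an $O(\eps)$ remainder $\eps\,\eta_1(\bfx,\bfv)$, exactly the vector field driving $\bfy$ in \eqref{eq:asymp} but evaluated at $\bfx$ rather than $\bfy$ and with $e$ in place of $g$. Since the uniform bound from the conservation of energy (stated just before Section~\ref{s:cont-x}) gives $\|\bfv(t)\|\le M + 2\sqrt{\|\phi\|_{L^\infty}}$ for all $t\in[0,T]$, and $b$, $\phi$ and their derivatives are smooth and bounded with $b\ge b_0$, both $\kappa_1$ and $\eta_1$ are bounded along the trajectory: $\|\kappa_1(\bfx,\bfv)\|\le C$ and $\|\eta_1(\bfx,\bfv)\|\le C$, and consequently $\|\tbx(t)-\bfx(t)\|\le C\,\eps$. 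The same energy bound controls $e = \tfrac12\|\bfv\|^2$, and the exact energy conservation $\frac{\dD}{\dD t}(e + \phi(\bfx)) = 0$ means $e$ evolves by $\frac{\dD e}{\dD t} = -\frac{\dD}{\dD t}\phi(\bfx)$, which is precisely the $g$-equation in \eqref{eq:asymp} but with $\bfx$ instead of $\bfy$.

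Next I would write the error as $\bR := \tbx - \bfy$ and $r := e - g$ and differentiate. Using \eqref{eq:cont-x2} and the $\bfy$-equation of \eqref{eq:asymp},
\[
\frac{\dD \bR}{\dD t} = \left(-\frac{\bE^\perp}{b}(\bfx) - e\,\nabla_\bfx^\perp\!\left(\tfrac1b\right)(\bfx)\right) - \left(-\frac{\bE^\perp}{b}(\bfy) - g\,\nabla_\bfx^\perp\!\left(\tfrac1b\right)(\bfy)\right) + \eps\,\eta_1(\bfx,\bfv)\,.
\]
The bracketed difference is Lipschitz in $(\bfx,e)$ versus $(\bfy,g)$ with a constant depending only on $\|\phi\|$, $\|b^{-1}\|$ and their derivatives and on the uniform bound for $e$; moreover $\|\bfx - \bfy\| \le \|\bR\| + \|\bfx - \tbx\| \le \|\bR\| + C\eps$. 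Hence $\|\frac{\dD\bR}{\dD t}\| \le C(\|\bR\| + |r| + \eps)$. Similarly, $\frac{\dD r}{\dD t} = -\frac{\dD}{\dD t}\phi(\bfx) + \frac{\dD}{\dD t}\phi(\bfy)$; rather than differentiating $\phi$ (which would reintroduce $\bfv$ through $\frac{\dD\bfx}{\dD t} = \bfv/\eps$ and spoil the estimate), I would integrate the $r$-equation in time directly, getting $r(t) = -(\phi(\bfx(t)) - \phi(\bfx^0)) + (\phi(\bfy(t)) - \phi(\bfy^0))$, so that $|r(t)| \le C\|\bfx(t)-\bfy(t)\| \le C(\|\bR(t)\| + \eps)$ using $\bfx^0 = \bfy^0$. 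Feeding this back, $\|\frac{\dD\bR}{\dD t}\| \le C(\|\bR\| + \eps)$ with $\bR(0) = \eps\,\kappa_1(\bfx^0,\bfv^0)$, so $\|\bR(0)\|\le C\eps$, and Gronwall on $[0,T]$ yields $\|\bR(t)\| \le C\eps$, whence $|r(t)| \le C\eps$ and finally $\|\bfx(t)-\bfy(t)\| \le \|\bR(t)\| + C\eps \le C\eps$ and $|e(t)-g(t)| \le C\eps$, which is the claim.

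The one subtlety — and the step I expect to require the most care — is handling the $e$-equation (equivalently the $\phi$ term) without ever differentiating $\phi(\bfx)$ in time, since $\frac{\dD}{\dD t}\phi(\bfx) = \nabla\phi(\bfx)\cdot \frac{\dD\bfx}{\dD t} = \nabla\phi(\bfx)\cdot\bfv/\eps$ is only $O(1/\eps)$, not $O(1)$, so a naive bound on $\frac{\dD r}{\dD t}$ fails. The fix, as indicated above, is to exploit that both $e$ and $g$ solve equations of the form "$\frac{\dD}{\dD t}(\cdot) = -\frac{\dD}{\dD t}\phi(\text{position})$" exactly, so the time derivatives cancel upon integration and one only ever needs to compare $\phi$ at the two positions — no derivative of $\phi$ along the fast flow appears. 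Everything else is a routine Lipschitz-plus-Gronwall estimate, with all constants controlled by $T$, the uniform velocity bound (hence by $M$ and $\|\phi\|_{L^\infty}$), and the fixed bounds on $b$, $b^{-1}$, $\phi$ and their derivatives.
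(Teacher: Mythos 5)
Your proposal is correct and follows essentially the same route as the paper: both use the normal form \eqref{eq:cont-x2}, observe that $e-g = -(\phi(\bfx)-\phi(\bfy))$ exactly so no $1/\eps$ derivative of $\phi$ ever enters, and close with a Gr\"onwall estimate using the uniform bounds from energy conservation. The only cosmetic difference is that you track the modified variable $\tbx = \bfx + \eps\,\kappa_1(\bfx,\bfv)$ and apply a differential Gr\"onwall, whereas the paper integrates \eqref{eq:cont-x2} and compares the resulting integral equation for $\bfx$ with that of $\bfy$ directly; the underlying mechanism is identical.
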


The dependence of $C$ on $T$ and $\|\bfv^0\|$ could actually be made rather explicit ; see the related analyses in \cite{FiRo20,FiRo21} but we have decided to cut technicalities to a bare minimum. 

\begin{proof}
Note that from the conservation of energy one deduces that for some $M'$ depending only on $M$ (and $\phi$), $e$, $\bfv$ and $g$ remain bounded by $M'$ when $\|\bfv^0\|\leq M$. Then observe that 
\begin{align*}
(\bfz,\bfw)&\mapsto \kappa_1(\bfz,\bfw)\,,&
(\bfz,\bfw)&\mapsto \eta_1(\bfz,\bfw)
\end{align*}
are uniformly bounded and 
\[
(\bfz,h)\mapsto -h\,\nabla_{\bfx}^\perp\left(\dfrac1b\right)(\bfz)-\dfrac{\bE^{\perp}}{b}(\bfz)
\]
is uniformly Lipschitz when restricted respectively to $\|\bfw\|\leq M'$ and $|h|\leq M'$. 

By integrating \eqref{eq:cont-x2} one deduces that for some $K$ depending only on $M>0$ and $\eps_0>0$ (and the fields), for any $0\leq s\leq t$,
\[
\left\| \bfx(t)-\bfx(s)+\int_s^t \left(e(\tau)\,\nabla_{\bfx}^\perp\left(\dfrac1b\right)(\bfx(\tau))+\dfrac{\bE^{\perp}}{b}(\bfx(\tau))\right)\,\dD \tau \right\|
\,\leq\,K\,\eps\,(1+t-s)\,.
\]
In turn note that for any $0\leq s\leq t$,
\[
\bfy(t)-\bfy(s)+\int_s^t \left(g(\tau)\,\nabla_{\bfx}^\perp\left(\dfrac1b\right)(\bfy(\tau))+\dfrac{\bE^{\perp}}{b}(\bfy(\tau))\right)\,\dD \tau
\]
and for any $t$, $e(t)-g(t)=-(\phi(\bfx(t))-\phi(\bfy(t)))$. Substracting and using the aforementioned Lipschitz bound yield that for some $K'$ depending only on $M>0$ and $\eps_0>0$ (and the fields), for any $t\geq0$, 
\[
\|\bfy(t)-\bfx(t)\|\,\leq\,K'\int_0^t \|\bfy(s)-\bfx(s)\|\,\dD s\,+\,K\,\eps\,(1+t)\,.
\]
The proof is then concluded by the Gr\"onwall lemma with $C=\eD^{K'\,T}\,K\,(1+T)\,(1+\|\nabla_{\bfx}\phi\|_{L^\infty})$ since the bound on $e-g$ stems from the bound on $\bfx-\bfy$ and the energy conservation.
\end{proof}

\subsection{Guiding-center variables}\label{s:cont-gc}

The above computations also suggest that the evolution of the guiding center variable
\[
\GC:=\bfx-\,\eps\,\dfrac{\bfv^{\perp}}{b(\bfx)}
\]
could be described as part of a slow uncoupled dynamics up to $\cO(\eps^2)$ errors.

In order to achieve this we need to refine \eqref{eq:cont-x2} and the energy conservation so as to get rid of $\cO(\eps)$ terms involving $\bfv$ either appearing explicitly in $\eps\,\eta_1$ or arising from the replacement of $\bfx$ with $\GC$ in $\cO(1)$ terms. Heeding the shape of $\eta_1$  we need a trilinear version of Lemmas~\ref{le:cont-1st} and~\ref{le:cont-2nd}.

\begin{lemma} \label{le:cont-3rd}
Consider $\bT$ a differentiable function of time with values in \emph{symmetric} trilinear maps from $\R^2\times\R^2\times\R^2$ to some $\R^n$, $n\in\N^*$. Then solutions to \eqref{eq:ODE} satisfy
\begin{equation*}
\bT(t)(\bfv(t),\bfv(t),\bfv(t)) \, = \,
\,-\,\eps^{2}\dfrac{\dD \kappa_{\bT}(\cdot,\bfx,\bfv)}{{\dD}t}(t) \,+\,\eps\,\eta_{\bT}(t,\bfx(t),\bfv(t)) \,,
\end{equation*}
where
\begin{align*} 
\kappa_{\bT}(t, \bfx, \bfv) & \,:=\,-\,\dfrac{1}{b(\bfx)}\left(\bT(t)(\bfv,\bfv,\bfv^{\perp})+\frac23\,\bT(t)(\bfv^\perp,\bfv^\perp,\bfv^{\perp})\right)\,,\\
\end{align*}
and
\begin{align*} 
  \eta_{\bT}(t, \bfx, \bfv) &\,:=\, -\,\dD_{\bfx}\left(\dfrac1b\right)(\bfx)(\bfv)\,
\left(\bT(t)(\bfv,\bfv,\bfv^{\perp})+\frac23\,\bT(t)(\bfv^\perp,\bfv^\perp,\bfv^{\perp})\right)\\
&\qquad
-\frac{\eps}{b(\bfx)}\,
\left(\dfrac{\dD \bT}{{\dD}t}(t)(\bfv,\bfv,\bfv^{\perp})+\frac23\,\dfrac{\dD \bT}{{\dD}t}(t)(\bfv^\perp,\bfv^\perp,\bfv^{\perp})\right)\\
&\qquad-\dfrac{1}{b(\bfx)}\,\left(\,2\,\bT(t)(\bE(\bfx),\bfv,\bfv^\perp)+\bT(t)(\bE^{\perp}(\bfx),\bfv,\bfv)+2\,\bT(t)(\bE^\perp(\bfx),\bfv^{\perp},\bfv^\perp)\,\right)\,.
\end{align*}
\end{lemma}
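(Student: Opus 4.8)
The plan is to establish the identity by differentiating the proposed $\kappa_{\bT}(\cdot,\bfx,\bfv)$ directly along an arbitrary solution of \eqref{eq:ODE}; this is the most economical route, although one could equally \emph{derive} $\kappa_{\bT}$ from scratch by inserting \eqref{eq:velocity} into one slot of $\bT(t)(\bfv,\bfv,\bfv)$ and integrating by parts, exactly as in the proof of Lemma~\ref{le:cont-2nd}, and then repeating the same manoeuvre on the order-one leftover $\bT(t)(\bfv,\bfv^\perp,\bfv^\perp)$ that this produces.

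The inputs from \eqref{eq:ODE} are: $\eps\,\dD\bfx/\dD t=\bfv$, hence $\eps^2\,\dD(b(\bfx)^{-1})/\dD t=\eps\,\dD_{\bfx}(b^{-1})(\bfx)(\bfv)$; and $\eps\,\dD\bfv/\dD t=\bE(\bfx)-b(\bfx)\,\bfv^\perp/\eps$, hence $\eps^2\,\dD\bfv/\dD t=\eps\,\bE(\bfx)-b(\bfx)\,\bfv^\perp$ and, applying the (linear) rotation $\perp$ and using $(\bfw^\perp)^\perp=-\bfw$, $\eps^2\,\dD\bfv^\perp/\dD t=\eps\,\bE^\perp(\bfx)+b(\bfx)\,\bfv$.

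Writing $\kappa_{\bT}=-b(\bfx)^{-1}P$ with $P:=\bT(t)(\bfv,\bfv,\bfv^\perp)+\tfrac23\,\bT(t)(\bfv^\perp,\bfv^\perp,\bfv^\perp)$, one expands $\eps^2\,\dD\kappa_{\bT}/\dD t$ by the product rule. The term in which the derivative falls on $b(\bfx)^{-1}$ reproduces, once the factor $\eps^2\,\dD(b(\bfx)^{-1})/\dD t$ is computed, exactly $\eps$ times the first line of $\eta_{\bT}$. In $\eps^2\,\dD P/\dD t$, the contributions in which the explicit time dependence of $\bT$ is differentiated give $\eps$ times the second line of $\eta_{\bT}$. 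In the remaining contributions, where the copies of $\bfv$ and $\bfv^\perp$ are differentiated, one substitutes the expressions above for $\eps^2\,\dD\bfv/\dD t$ and $\eps^2\,\dD\bfv^\perp/\dD t$: the pieces carrying $\eps\,\bE(\bfx)$ or $\eps\,\bE^\perp(\bfx)$ assemble, after using the symmetry of $\bT$ to rewrite $\bT(\bfv,\bfv,\bE^\perp(\bfx))$ as $\bT(\bE^\perp(\bfx),\bfv,\bfv)$, into $\eps$ times the third line of $\eta_{\bT}$, while the pieces carrying $-b(\bfx)\,\bfv^\perp$ or $+b(\bfx)\,\bfv$ are the only order-one terms.

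These order-one terms are $-2\,b(\bfx)\,\bT(\bfv^\perp,\bfv,\bfv^\perp)$ (from differentiating the two first slots of $\bT(\bfv,\bfv,\bfv^\perp)$), $+b(\bfx)\,\bT(\bfv,\bfv,\bfv)$ (from differentiating its last slot), and $+2\,b(\bfx)\,\bT(\bfv,\bfv^\perp,\bfv^\perp)$ (from $\tfrac23\,\bT(\bfv^\perp,\bfv^\perp,\bfv^\perp)$). Since $\bT$ is symmetric, $\bT(\bfv^\perp,\bfv,\bfv^\perp)=\bT(\bfv,\bfv^\perp,\bfv^\perp)$, so the first and third cancel and only $b(\bfx)\,\bT(\bfv,\bfv,\bfv)$ survives; the coefficient $\tfrac23$ in $P$ is precisely what is needed for this cancellation. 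Dividing the whole of $\eps^2\,\dD P/\dD t$ by $-b(\bfx)$ thus turns this block into $-\bT(\bfv,\bfv,\bfv)$, and collecting all pieces gives $\eps^2\,\dD\kappa_{\bT}/\dD t=-\bT(\bfv,\bfv,\bfv)+\eps\,\eta_{\bT}$, which is the claim. The only obstacle is bookkeeping discipline: in contrast with Lemma~\ref{le:cont-2nd} there is no slow ``trace'' contribution here --- a purely cubic form on $\R^2$ averages to zero under the fast rotation of $\bfv$, which is why no $e$-term appears --- so the whole content lies in the cancellation of the order-one cubic terms, and one must be careful with the sign $(\bfv^\perp)^\perp=-\bfv$ and invoke only those permutations of the slots of $\bT$ that its symmetry genuinely allows.
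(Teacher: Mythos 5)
Your proof is correct and follows the same elementary route the paper intends (the paper leaves this lemma unproved, having just given the analogous argument for Lemma~\ref{le:cont-2nd}). You correctly verify the identity by differentiating the proposed $\kappa_{\bT}$: the $\cO(1)$ cubic terms $-2\,b\,\bT(\bfv^\perp,\bfv,\bfv^\perp)$, $+b\,\bT(\bfv,\bfv,\bfv)$ and $+2\,b\,\bT(\bfv,\bfv^\perp,\bfv^\perp)$ indeed collapse to $b\,\bT(\bfv,\bfv,\bfv)$ by the symmetry of $\bT$, which is precisely what the coefficient $\tfrac23$ is engineered to accomplish, and the remaining $\cO(\eps)$ and $\cO(\eps^2)$ pieces reassemble into the three lines of $\eta_{\bT}$; your side remark that a cubic form has no circular average, so no $e\,\Tr$--type term appears, is the correct explanation for the structural difference with Lemma~\ref{le:cont-2nd}. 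One minor expository wrinkle: when you say the $\dD\bT/\dD t$ and $\bE$ contributions ``in $\eps^2\,\dD P/\dD t$'' give $\eps$ times the second and third lines of $\eta_{\bT}$, this is only true after the overall factor $-1/b(\bfx)$ from $\kappa_{\bT}=-P/b(\bfx)$ has been applied --- you do apply it, but only state so explicitly at the very end, which could momentarily confuse a reader checking signs.
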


Let us set 
\[
\eGC:=e\,+\,\dfrac{\eps}{b(\bfx)}\,\dD_{\bfx}\phi(\bfx)\left(\bfv^{\perp}\right)
\,=\,e\,+\,\dfrac{\eps}{b(\bfx)}\,\bE^\perp(\bfx)\cdot\bfv
\]
so that solutions to \eqref{eq:ODE} satisfy
\begin{equation}\label{eq:cont-e}
\dfrac{\dD}{\dD t}\left(\eGC+\phi(\GC)+\eps^2\,\kappa_e(\bfx,\bfv)\right)\,=\,0
\end{equation}
with
\begin{align*}
\kappa_e(\bfx,\bfv)
\,:=\,-\,\dfrac{1}{b(\bfx)^2}\int_0^1 \dD^2_{\bfx} \phi\left(\bfx-\eps\,\tau\,\dfrac{\bfv^\perp}{b(\bfx)}\right)
\left(\bfv^\perp,\bfv^\perp\right)
\,(1-\tau)\,\dD \tau\,.
\end{align*}

Now we apply Lemma~\ref{le:cont-1st} with $\bL_2(t):=\cL_2(\bfx(t),\GC(t),e(t))$ where
\begin{align*}
\cL_2(\bfx,\bfy,e)(\bfw)
&\,:=\,-\dfrac{e}{b(\bfx)}\,\dD\left(\nabla_{\bfx}^\perp\left(\dfrac1b\right)\right)(\bfx)(\bfw^\perp)
+\dfrac{\bE^\perp}{b}(\bfx)\cdot\bfw\quad\nabla_{\bfx}^\perp\left(\dfrac1b\right)(\bfy)\\
&\quad
-\dfrac{1}{b(\bfx)}\dD_\bfx\left(\dfrac{\bE^\perp}{b}\right)(\bfx)(\bfw^\perp)
-\frac12\dfrac{1}{b(\bfx)}\,\left(\cB_1(\bfx)(\bfw,\bE^{\perp}(\bfx))+\cB_1(\bfx)(\bE(\bfx),\bfw^{\perp})\right)
\end{align*}
and Lemma~\ref{le:cont-3rd} with $\bT_1(t):=\cT_1(\bfx(t))$ where
\begin{align*}
\cT_1&(\bfx)(\bfw_1,\bfw_2,\bfw_3)\\
&\,:=\,
\frac{1}{12}\,
\left(\dD_{\bfx}^2\left(\dfrac{1}{b^2}\right)(\bfx)(\bfw_1,\bfw_2)\,\bfw_3
+\dD_{\bfx}^2\left(\dfrac{1}{b^2}\right)(\bfx)(\bfw_3,\bfw_1)\,\bfw_2
+\dD_{\bfx}^2\left(\dfrac{1}{b^2}\right)(\bfx)(\bfw_2,\bfw_3)\,\bfw_1
\right)\,.
\end{align*}
This implies that solutions to \eqref{eq:ODE} satisfy
\begin{align}\label{eq:cont-gc}
\dfrac{\dD}{{\dD}t} \left( \GC\,+\,\eps^2\,\kappa_2(\bfx,\GC,e,\bfv)
\right) \,=\,-\eGC\,\nabla_{\bfx}^\perp\left(\dfrac1b\right)(\GC)-\dfrac{\bE^{\perp}}{b}(\GC)
+\eps^2\,\eta_2(\bfx,\GC,e,\bfv)\,,
\end{align}
with $\kappa_2$ given by
\begin{align*} 
\kappa_2(\bfx,\GC,e,\bfv) & \,:=\,-\,\frac{1}{2}\,\dfrac{1}{b(\bfx)}\cB_1(\bfx)(\bfv,\bfv^{\perp})\\
&\quad\,-\,\dfrac{\eps}{b(\bfx)}
\left(\cL_2(\bfx,\GC,e)(\bfv^{\perp})
+\cT_1(\bfx)(\bfv,\bfv,\bfv^{\perp})+\tfrac23\,\cT_1(\bfx)(\bfv^\perp,\bfv^\perp,\bfv^{\perp})\right)\,,\\
\end{align*}
whereas $\eta_2$ is
\begin{align*}
  \eta_2(\bfx,\GC,e,\bfv) &\,:=\,
\dfrac{1}{b(\bfx)^2}\int_0^1 \dD^2_{\bfx}\left(e\,\nabla_{\bfx}^\perp\left(\dfrac1b\right)
+\dfrac{\bE^\perp}{b}\right)\left(\bfx-\eps\,\tau\,\dfrac{\bfv^\perp}{b(\bfx)}\right)
\left(\bfv^\perp,\bfv^\perp\right)
\,(1-\tau)\,\dD \tau\\
&\quad
-\,\left(\dD_{\bfx}\left(\dfrac{\cL_2}{b}\right)(\bfx,\GC,e)(\bfv)\right)\,(\bfv^{\perp})
-\dfrac{\cL_2(\bfx,\GC,e)}{b(\bfx)}\,(\bE^{\perp}(\bfx))\\
&\quad-\frac{1}{b(\bfx)}\,\left(\dD_e \cL_2(\bfx,\GC,e)(\bE(\bfx)\cdot\bfv)\right)(\bfv^{\perp})\\
&\quad+\frac{\eps}{b(\bfx)}\,\left(\dD_{\bfy} \cL_2(\bfx,\GC,e)\left(
\dD_{\bfx}\left(\dfrac1b\right)(\bfx)(\bfv)\,\bfv^{\perp}
+\dfrac{\bE^{\perp}}{b}(\bfx)\right)\right)(\bfv^{\perp})\\
&\quad-\,
\left(\left(\dD_{\bfx}\left(\dfrac{\cT_1}{b}\right)(\bfx)(\bfv)\right)(\bfv,\bfv,\bfv^{\perp})+\frac23\,
\left(\dD_{\bfx}\left(\dfrac{\cT_1}{b}\right)(\bfx)(\bfv)\right)(\bfv^\perp,\bfv^\perp,\bfv^{\perp})\right)\\
&\quad-\,\left(\,2\,\dfrac{\cT_1}{b}(\bfx)(\bE(\bfx),\bfv,\bfv^\perp)+\dfrac{\cT_1}{b}(\bfx)(\bE^{\perp}(\bfx),\bfv,\bfv)+2\,\dfrac{\cT_1}{b}(\bfx)(\bE^\perp(\bfx),\bfv^{\perp},\bfv^\perp)\,\right)\,.
\end{align*}

From here a slight variation on the proof of Proposition~\ref{p:cont-x} proves the following.

\begin{proposition}\label{p:cont-GC}
Let $T>0$, $M>0$ and $\eps_0>0$. There exists $C>0$ such that when $0<\eps\leq\eps_0$ if $(\bfx,\bfv)$ solves \eqref{eq:ODE} with initial datum $(\bfx^0,\bfv^0)$ such that $\|\bfv^0\|\leq M$ and $(\bfy,g)$ solves \eqref{eq:asymp} with initial datum 
\[
(\bfy^0,g^0)=\left(\bfx^0-\,\dfrac{\eps}{b(\bfx^0)}\,(\bfv^0)^\perp,\quad
\frac12\|\bfv^0\|^2
+\,\dfrac{\eps}{b(\bfx^0)}\,\bE^\perp(\bfx^0)\cdot\bfv^0\right)
\]
then with
\begin{align*}
\GC&:=\bfx-\,\dfrac{\eps}{b(\bfx)}\,\bfv^\perp\,,&
\eGC&:=\,\frac12\|\bfv\|^2+\,\dfrac{\eps}{b(\bfx)}\,\bE^\perp(\bfx)\cdot\bfv\,,&
\end{align*}
for any $0\leq t\leq T$,
\[
\|(\GC(t),\eGC(t))-(\bfy(t),g(t))\|\leq C\,\eps^2\,.
\]
\end{proposition}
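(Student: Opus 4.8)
The plan is to repeat the proof of Proposition~\ref{p:cont-x} almost verbatim, now starting from the refined identities \eqref{eq:cont-e} and \eqref{eq:cont-gc} in place of \eqref{eq:cont-x2}, so that the resulting error becomes $\cO(\eps^2)$ rather than $\cO(\eps)$.

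First I would collect uniform bounds. Energy conservation gives $\|\bfv(t)\|\le M'$ for some $M'$ depending only on $M$ (and $\phi$), hence $e=\tfrac12\|\bfv\|^2$ is bounded; then $\GC=\bfx-\eps\,b(\bfx)^{-1}\bfv^\perp$ and $\eGC=e+\eps\,b(\bfx)^{-1}\bE^\perp(\bfx)\cdot\bfv$ differ from $(\bfx,e)$ by $\cO(\eps)$ and are therefore bounded uniformly in $(\eps,t)$, while $\bfy$ and $g$ stay bounded because of the choice of $(\bfy^0,g^0)$ and the boundedness of $\phi$. On the resulting compact set the standing smoothness and boundedness hypotheses on $b$, $1/b$ and $\phi$ (recall $b\ge b_0>0$) make $\kappa_e$, $\kappa_2$ and $\eta_2$ — including the auxiliary maps $\cL_2$, $\cT_1$ and the finitely many of their derivatives that enter $\eta_2$ — uniformly bounded, and make the asymptotic drift $(\bfz,h)\mapsto -h\,\nabla_{\bfx}^{\perp}(1/b)(\bfz)-(\bE^{\perp}/b)(\bfz)$ uniformly Lipschitz.

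Next I would integrate. Integrating \eqref{eq:cont-gc} over $[0,t]$ and absorbing the boundary terms $\eps^2\kappa_2$, which are $\cO(\eps^2)$, yields
\[
\Bigl\|\GC(t)-\GC(0)+\int_0^t\Bigl(\eGC(\tau)\,\nabla_{\bfx}^{\perp}\bigl(\tfrac1b\bigr)(\GC(\tau))+\frac{\bE^{\perp}}{b}(\GC(\tau))\Bigr)\dD\tau\Bigr\|\le K\,\eps^2\,(1+t)\,,
\]
and integrating \eqref{eq:cont-e} gives $\eGC(t)-\eGC(0)=-(\phi(\GC(t))-\phi(\GC(0)))+\cO(\eps^2)$. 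The exact counterparts for $(\bfy,g)$ solving \eqref{eq:asymp} hold with vanishing remainder, and the initial datum $(\bfy^0,g^0)$ is prescribed precisely so that $\GC(0)=\bfy(0)$ and $\eGC(0)=g(0)$ — this is the single point where exact initialization matters, since an $\cO(\eps)$ mismatch there would spoil the final order. Subtracting, using $|\eGC(t)-g(t)|\le L\,\|\GC(t)-\bfy(t)\|+\cO(\eps^2)$ (from the two energy relations together with $\GC(0)=\bfy(0)$) and the Lipschitz bound on the drift, I obtain
\[
\|\GC(t)-\bfy(t)\|\le C'\int_0^t\|\GC(\tau)-\bfy(\tau)\|\,\dD\tau+C'\,\eps^2\,(1+t)\,,
\]
and Gr\"onwall's lemma closes the bound on $\GC-\bfy$, hence also on $\eGC-g$.

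The only genuinely laborious step is establishing the uniform boundedness of $\eta_2$: it is a long expression involving second derivatives of $b^{-1}$ and $b^{-2}$, the map $\cL_2$ together with its partial derivatives in $(\bfx,\bfy,e)$, and $\cT_1$ together with its derivative, all evaluated along bounded arguments — including the shifted point $\bfx-\eps\,\tau\,b(\bfx)^{-1}\bfv^\perp$ occurring under the integral sign. This is purely mechanical given the standing hypotheses, since every factor is a smooth function of bounded quantities and no negative power of $b$ worse than a fixed one arises, but it is where essentially all the bookkeeping lives; everything else is the argument of Proposition~\ref{p:cont-x} with $\eps$ upgraded to $\eps^2$.
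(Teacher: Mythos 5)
Your proposal is correct and is essentially the proof the paper intends: the paper states only that Proposition~\ref{p:cont-GC} follows from \eqref{eq:cont-e} and \eqref{eq:cont-gc} by ``a slight variation on the proof of Proposition~\ref{p:cont-x},'' and your argument — integrate the refined identities, absorb the $\eps^2\kappa_2$ and $\eps^2\kappa_e$ boundary terms, use the exact initialization $(\GC(0),\eGC(0))=(\bfy^0,g^0)$, and close with the Lipschitz bound on the drift and Gr\"onwall — is precisely that variation with $\eps$ upgraded to $\eps^2$.
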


\section{Discrete uniform bounds}\label{s:disc-unif}

From now on we shall focus on discrete analysis. It is useful to introduce a few more pieces of notation so as to gain in writing concision. We shall denote composition with a subscript
\[
\bG_{\bfz}^\ell:=\bG(\bfz^\ell)
\]
so that for instance
\begin{align*}
\bG_{\ubz}^{n+1/2}&=\bG\left(\dfrac{\bfz^{n+1}+\bfz^n}{2}\right)\,,&
\underline{\bG_{\bfz}}^{n+1/2}&=\dfrac{\bG(\bfz^{n+1})+\bG(\bfz^n)}{2}\,.&
\end{align*}
Moreover we introduce the matrix $\bJ$ associated with ${}^\perp$
\begin{align} \label{notation_J}
\bJ&=\begin{pmatrix}
        0 &-1 \\
        1 &0
    \end{pmatrix}\,,&
\end{align}    
so that $\bfz^{\perp} \,=\, \bJ \bfz$. $\bJ$ is skew-adjoint and unitary. In particular $\bJ^{-1}=-\bJ$. For concision's sake it is also useful to introduce
\[
\bF:=-b^{-1}\bJ\bE\,.
\]

The following lemma gathers key observations on $\bJ$ towards uniform bounds and stability.

\begin{lemma} \label{le:stab}
\begin{enumerate}
\item For any real $\alpha$, $\Id+\alpha\bJ$ is invertible and for any vector $\bfz$
\begin{align*}
\|(\Id+\alpha\bJ)\,\bfz\|&=\sqrt{1+\alpha^2}\,\|\bfz\|\,,&
\|(\Id+\alpha\bJ)^{-1}\,\bfz\|&=\dfrac{1}{\sqrt{1+\alpha^2}}\,\|\bfz\|\,.
\end{align*}
\item For any real $\alpha$, $(\Id+\alpha\bJ)^{-1}(\Id-\alpha\bJ)$ is unitary.
\item For any real $\alpha$, $\alpha')$,
\[
(\Id+\alpha\bJ)^{-1}
-(\Id+\alpha'\bJ)^{-1}
\,=\,-(\Id+\alpha\bJ)^{-1}\,(\alpha-\alpha')\bJ\,(\Id+\alpha'\bJ)^{-1}\,.
\]
\end{enumerate}
\end{lemma}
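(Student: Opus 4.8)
The entire statement is elementary and hinges on the single identity $\bJ^2=-\Id$, obvious from the definition \eqref{notation_J}, together with skew-adjointness $\transp{\bJ}=-\bJ$ and the fact that every matrix involved is a polynomial in $\bJ$, hence they all commute.

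For part~(1), I would multiply out
\[
(\Id+\alpha\bJ)(\Id-\alpha\bJ)\,=\,\Id-\alpha^2\bJ^2\,=\,(1+\alpha^2)\,\Id\,,
\]
which, since $1+\alpha^2>0$, proves both that $\Id+\alpha\bJ$ is invertible and that $(\Id+\alpha\bJ)^{-1}=(1+\alpha^2)^{-1}(\Id-\alpha\bJ)$. The norm identity then follows from
\[
\|(\Id+\alpha\bJ)\,\bfz\|^2\,=\,\langle\bfz,(\Id-\alpha\bJ)(\Id+\alpha\bJ)\,\bfz\rangle\,=\,(1+\alpha^2)\,\|\bfz\|^2\,,
\]
where the first equality uses that the adjoint of $\Id+\alpha\bJ$ is $\Id-\alpha\bJ$ by skew-adjointness of $\bJ$; substituting $\bfz\mapsto(\Id+\alpha\bJ)^{-1}\bfz$ yields the second stated equality.

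For part~(2), the same computation applied with $-\alpha$ in place of $\alpha$ gives $\|(\Id-\alpha\bJ)\,\bfz\|=\sqrt{1+\alpha^2}\,\|\bfz\|$, so combining with the inverse-norm identity from part~(1) one gets $\|(\Id+\alpha\bJ)^{-1}(\Id-\alpha\bJ)\,\bfz\|=\|\bfz\|$ for every $\bfz$; a norm-preserving linear endomorphism of $\R^2$ is orthogonal, i.e.\ unitary. For part~(3) I would simply invoke the resolvent-type identity $A^{-1}-B^{-1}=A^{-1}(B-A)B^{-1}$, valid for any invertible $A,B$ and verified by multiplying on the left by $A$ and on the right by $B$, applied with $A=\Id+\alpha\bJ$ and $B=\Id+\alpha'\bJ$, so that $B-A=(\alpha'-\alpha)\bJ=-(\alpha-\alpha')\bJ$.

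There is no genuine obstacle here; the only points requiring a moment's care are the sign bookkeeping in the skew-adjointness step and the (standard) remark that an isometry of a finite-dimensional real inner-product space is automatically orthogonal.
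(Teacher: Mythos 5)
Your proof is correct and follows essentially the same route as the paper's: both rest on the skew-adjointness and unitarity of $\bJ$ (equivalently, $\transp{\bJ}=-\bJ$ and $\bJ^2=-\Id$) for part~(1), and both derive part~(3) from the resolvent-type identity obtained by composing $(\Id+\alpha'\bJ)-(\Id+\alpha\bJ)=-(\alpha-\alpha')\bJ$ with the two inverses. The only stylistic difference is that you write out the explicit inverse $(1+\alpha^2)^{-1}(\Id-\alpha\bJ)$ whereas the paper deduces invertibility directly from injectivity via the Pythagoras identity.
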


\begin{proof}
The first norm equality stems from the Pythagoras' equality and the fact that $\bJ$ is skew-adjoint and unitary. It implies that $\Id+\alpha\bJ$ is one-to-one hence invertible, the second norm equality and the unitary property. 

The comparison equality may be derived by composing from the left with $(\Id+\alpha\bJ)^{-1}$ and from the right with $(\Id+\alpha'\bJ)^{-1}$ the simple $(\Id+\alpha'\bJ)-(\Id+\alpha\bJ)\,=\,-\,(\alpha-\alpha')\bJ$.
\end{proof}

The goal of the present section is to prove the following proposition.

\begin{proposition}\label{p:unif}
Let $T>0$, $M>0$, $\eps_0>0$. There exist $\delta_0>0$, $M'>0$ and $C>0$ such that when $0<\eps\leq\eps_0$ and $0<\Delta t\leq \delta_0$ if $(\bfx,e,\bfw)$ solves \eqref{scheme:CN} with initial datum $(\bfx^0,e^0,\bfw^0)$ such that $\|\bfw^0\|\leq M$, $|e^0|\leq \tfrac12M^2$ then when $0\leq n\,\Delta t\leq T$,
$$
|e^n|  \quad+\quad \|\bfw^n\|  \,\leq\, M'\,,
$$
and
$$
\left\{
\begin{array}{l}
\ds\|\ubw^{n+1/2}\|\,\leq\,
                  C\,\left(\frac{1}{\sqrt{1+\lambda^2}}+\eps\,\right)\,,
  \\[0.8em] 
\ds\|\bfx^{n+1}-\bfx^n\|
+\eps\|\bfw^{n+1}-\bfw^n\| \,\leq\, C\,\min\left(\left\{\dfrac{\Delta t}{\eps};\Delta t+\eps\right\}\right)\,,
\end{array}\right.
$$
where $\lambda:=\Delta t/\eps^2$.
\end{proposition}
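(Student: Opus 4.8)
\textbf{Proof strategy for Proposition~\ref{p:unif}.}
The plan is to run a bootstrap (continuous-induction) argument on $n$. I would fix a target constant $M'$ (to be chosen larger than the initial bound $M$, using energy-type considerations) and assume inductively that $|e^k|+\|\bfw^k\|\le M'$ for all $k\le n$; the task is then to (i) extract from the scheme the quantitative control of $\|\ubw^{n+1/2}\|$ and of the increments $\|\bfx^{n+1}-\bfx^n\|$, $\eps\|\bfw^{n+1}-\bfw^n\|$ advertised in the second display, uniformly over such $k$, and (ii) close the induction by showing $|e^{n+1}|+\|\bfw^{n+1}\|\le M'$.

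For step (i), the key is the third equation of \eqref{scheme:CN}. Writing it with $\ubw^{n+1/2}=\tfrac12(\bfw^{n+1}+\bfw^n)$ and isolating the implicit part gives
\begin{equation*}
\Bigl(\Id+\tfrac{\lambda}{2}\,b_{\ubx}^{n+1/2}\bJ\Bigr)\ubw^{n+1/2}
\,=\,\bfw^n\,+\,\tfrac{\Delta t}{2\eps}\,\bE_{\ubx}^{n+1/2}\,,
\end{equation*}
where $\lambda=\Delta t/\eps^2$. Since $b_{\ubx}^{n+1/2}\ge b_0>0$, Lemma~\ref{le:stab}(1) yields $\|\ubw^{n+1/2}\|\le (1+\tfrac14 b_0^2\lambda^2)^{-1/2}\bigl(\|\bfw^n\|+\tfrac{\Delta t}{2\eps}\|\bE\|_{L^\infty}\bigr)$, and then $\tfrac{\Delta t}{\eps}=\eps\,\lambda\le\eps_0(1+\lambda)$, so after absorbing constants one gets $\|\ubw^{n+1/2}\|\le C\bigl((1+\lambda^2)^{-1/2}+\eps\bigr)$, which also shows $\|\ubw^{n+1/2}\|\le C\min\{\lambda^{-1};1+\eps\}\le C\min\{\lambda^{-1};1\}$ up to constants. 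From this and the three equations of \eqref{scheme:CN} the increment bounds follow: the $\bfw$-equation gives $\eps\|\bfw^{n+1}-\bfw^n\|\le \tfrac{\Delta t}{\eps}\|\bE\|_{L^\infty}+b_{\ubx}^{n+1/2}\tfrac{\Delta t}{\eps}\|\ubw^{n+1/2}\|\le C\,\min\{\Delta t/\eps\,;\,\Delta t+\eps\}$ (using the two bounds on $\|\ubw^{n+1/2}\|$ respectively), and the $\bfx$-equation gives $\|\bfx^{n+1}-\bfx^n\|\le \Delta t\bigl(\eps^{-1}\|\ubw^{n+1/2}\|+C\,|\ue^{n+1/2}-\tfrac12\|\ubw^{n+1/2}\|^2|\bigr)$, where the second term is $O(\Delta t)$ by the inductive bound on $e,\bfw$, while the first is $\le C\min\{\Delta t/\eps\,;\,\Delta t+\eps\}$ by the same dichotomy.

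For step (ii), control of $e^{n+1}$ is essentially free: the second equation of \eqref{scheme:CN} telescopes, $e^{n+1}=e^0+\phi(\bfx^0)-\phi(\bfx^{n+1})$, so $|e^{n+1}|\le \tfrac12M^2+2\|\phi\|_{L^\infty}$ with no induction needed at all. The genuine work is the bound on $\|\bfw^{n+1}\|$: here one should mimic the discrete energy estimate. Taking the scalar product of the $\bfw$-equation with $\ubw^{n+1/2}$ kills the magnetic term (since $\bJ$ is skew-adjoint, $\bJ\ubw^{n+1/2}\cdot\ubw^{n+1/2}=0$) and gives $\tfrac12(\|\bfw^{n+1}\|^2-\|\bfw^n\|^2)=\tfrac{\Delta t}{\eps}\bE_{\ubx}^{n+1/2}\cdot\ubw^{n+1/2}$. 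Combining with the increment bound $\|\bfw^{n+1}-\bfw^n\|\le C\min\{1;\Delta t/\eps^2\}$ and the bound $\tfrac{\Delta t}{\eps}\|\ubw^{n+1/2}\|\le C\min\{\Delta t/\eps\,;\,\Delta t+\eps\}\le C(\Delta t+\eps_0)$... — more precisely, one gets $\|\bfw^{n+1}\|^2-\|\bfw^n\|^2\le C\,\Delta t\,(\|\bfw^n\|+\eps)$ after using $\|\ubw^{n+1/2}\|\le\tfrac12(\|\bfw^{n+1}\|+\|\bfw^n\|)$ and absorbing, whence a discrete Grönwall inequality on $\|\bfw^n\|^2$ over $0\le n\Delta t\le T$ produces a bound depending only on $M$, $T$, $\eps_0$ and the fields. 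Choosing $M'$ to be that bound (enlarged if necessary to dominate the $e$-bound) and $\delta_0$ small enough that the implicit solvability of Appendix~\ref{s:implicit} and all the above absorptions are valid closes the bootstrap.

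\textbf{Main obstacle.} The one subtlety is not the algebra but the logical order: the increment and $\|\ubw^{n+1/2}\|$ estimates of step (i) must be obtained under the mere a priori hypothesis $|e^k|+\|\bfw^k\|\le M'$ (which is why I isolate them before closing the induction), and the constants $C$ there must be shown to depend only on $M'$, $\eps_0$, $T$ and the fields — not on $\eps$ or $\Delta t$ — so that the final Grönwall step genuinely returns the \emph{same} $M'$ one started with. Keeping track of the two-sided bound $\|\ubw^{n+1/2}\|\le C\min\{(1+\lambda^2)^{-1/2}+\eps\,;\,1\}$ through both the $\Delta t/\eps$ and the $\Delta t+\eps$ faces of the minimum is the only place where one must be a little careful, but it is exactly what Lemma~\ref{le:stab}(1) is designed to deliver.
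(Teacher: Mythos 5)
Your step (i) — controlling $\|\ubw^{n+1/2}\|$ by rewriting the $\bfw$-equation as $(\Id+\tfrac{\lambda}{2}\,b_{\ubx}^{n+1/2}\bJ)\,\ubw^{n+1/2}=\bfw^n+\tfrac{\Delta t}{2\eps}\,\bE_{\ubx}^{n+1/2}$ and invoking Lemma~\ref{le:stab}(1) — is correct, and in fact cleaner than the paper's route (which goes through the auxiliary variable $\bfz^n:=\ubw^{n+1/2}-\eps\,\bF_{\ubx}^{n+1/2}$ and a two-step subtraction). But note that your estimate reads $\|\ubw^{n+1/2}\|\lesssim \|\bfw^n\|/\sqrt{1+\lambda^2}+\eps$: the constant $M'$ feeds in through $\|\bfw^n\|$, so the whole scheme hinges on closing the bootstrap for $\|\bfw^n\|$ in step (ii).

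That is where the proposal breaks. The discrete energy identity $\|\bfw^{n+1}\|^2-\|\bfw^n\|^2=\tfrac{2\Delta t}{\eps}\,\bE_{\ubx}^{n+1/2}\cdot\ubw^{n+1/2}$ is exact, but plugging in your bound on $\|\ubw^{n+1/2}\|$ gives a per-step increment of size
\begin{equation*}
\frac{\Delta t}{\eps}\left(\frac{1}{\sqrt{1+\lambda^2}}+\eps\right)
\,=\,\frac{\eps\,\lambda}{\sqrt{1+\lambda^2}}+\Delta t
\,\lesssim\,\eps+\Delta t\,,
\end{equation*}
which is \emph{not} of the form $\Delta t\cdot(\text{bounded})$. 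Summing over $n\leq T/\Delta t$ steps accumulates a contribution of order $T\eps/\Delta t$, which blows up whenever $\Delta t\ll\eps$ — a regime the Proposition explicitly allows (for instance $\Delta t\sim\eps^3$). Your written claim that ``one gets $\|\bfw^{n+1}\|^2-\|\bfw^n\|^2\leq C\,\Delta t\,(\|\bfw^n\|+\eps)$ after using $\|\ubw^{n+1/2}\|\leq\tfrac12(\|\bfw^{n+1}\|+\|\bfw^n\|)$ and absorbing'' is in fact false: that substitution yields, after dividing by $\|\bfw^{n+1}\|+\|\bfw^n\|$, only $\|\bfw^{n+1}\|-\|\bfw^n\|\leq\tfrac{\Delta t}{\eps}\,\|\bE\|_{L^\infty}$, which on iteration gives $\|\bfw^n\|\lesssim M+T/\eps$. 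The underlying issue is structural: $\bfw^n$ is the fast (oscillatory) variable, and the raw discrete energy relation does not see the cancellation that, at the continuous level, comes from $\tfrac{1}{\eps}\bE\cdot\bfv$ being an exact time derivative of $-\phi(\bfx)$.

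The paper's remedy — and what your argument is missing — is a near-identity change of variable that removes the leading slowly-varying ($\bE\times B$ drift) component of $\bfw$ before iterating. The paper sets $\bfu^n:=\bfw^n-\eps\,\bJ\bF_{\ubx}^{n-1/2}$ and shows, using the unitarity in Lemma~\ref{le:stab}(2), that $\|\bfu^{n+1}\|-\|\bfu^n\|\leq\eps\,\|\bF_{\ubx}^{n+1/2}-\bF_{\ubx}^{n-1/2}\|$. The explicit $\eps$ factor in front is what saves the day: combined with the increment bound $\|\bfx^{n+1}-\bfx^n\|\lesssim \eps\lambda/\sqrt{1+\lambda^2}+\Delta t$, one finds $\eps\,\|\bfx^{n+1}-\bfx^n\|\lesssim\Delta t\,(1/\sqrt{1+\lambda^2}+\eps)$, which is genuinely $\Delta t$-sized and closes under a discrete Grönwall. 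For the $\ubw^{n+1/2}$ bound the paper uses the analogous $\bfz^n$-recursion together with the coupled Lyapunov quantity $Z^n=\zeta^n+\eps+\eps\|\bfw^{n+1}\|^2+\eps\|\bfw^n\|^2$, precisely so that no a priori bound on $\|\bfw^n\|$ is needed and the bounds can be established sequentially rather than by bootstrap. If you want to stay within an energy-type argument, you would instead have to exploit the near-cancellation between $\tfrac{\Delta t}{\eps}\bE_{\ubx}\cdot\ubw^{n+1/2}$ and $-(\phi(\bfx^{n+1})-\phi(\bfx^n))$ via the $\bfx$-equation and a midpoint Taylor remainder of order $\|\bfx^{n+1}-\bfx^n\|^3$ — this is essentially what the paper carries out later in Lemma~\ref{le:disc-cons}, but it cannot be invoked at this stage since that lemma already relies on Proposition~\ref{p:unif}.
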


A closer inspection of the proof, which we omit for the sake of readability, reveals that $\delta_0$ depends on the fields but not on $(T,M,\eps_0)$.

Let us begin the proof by observing that as in the continuous case the bound on $e^n$ follows readily from the energy conservation provided that $M'\geq \tfrac12M^2+2\|\phi\|_\infty$. The proofs of the rest of the bounds require more work and are scattered in the following subsections.

\subsection*{Bound on $\ubw^{n+1/2}$} 

To begin with we set for any $n\geq0$
\[
\bfz^n:=\ubw^{n+1/2}\,-\,\eps\,\bF_{\ubx}^{n+1/2}
\]
so that 
\[
\bfw^{n+1}-\bfw^n\,=\,-\lambda\,b_{\ubx}^{n+1/2}\,\bJ\,\bfz^n\,.
\]
Since for any $n\geq0$, $\bfw^{n+2}-\bfw^n=2\,(\ubw^{n+3/2}-\ubw^{n+1/2})$, we deduce by substracting two consecutive steps that for any $n\geq0$
\[
\bfz^{n+1}-\bfz^n
\,=\,-\eps\,(\bF_{\ubx}^{n+3/2}-\bF_{\ubx}^{n+1/2})
-\tfrac12\lambda\,b_{\ubx}^{n+3/2}\,\bJ\,\bfz^{n+1}
+\tfrac12\lambda\,b_{\ubx}^{n+1/2}\,\bJ\,\bfz^n\,,
\]
or in other words
\[
\left(\Id+\tfrac12\lambda\,b_{\ubx}^{n+3/2}\,\bJ\right)\bfz^{n+1}
\,=\,
\left(\Id+\tfrac12\lambda\,b_{\ubx}^{n+1/2}\,\bJ\right)\bfz^{n}
-\eps\,(\bF_{\ubx}^{n+3/2}-\bF_{\ubx}^{n+1/2})\,.
\]

At this stage, Lemma~\ref{le:stab} motivates the introduction of 
\[
\zeta^n:=\sqrt{1+\tfrac14\lambda^2\,(b_{\ubx}^{n+1/2})^2}\,\|\bfz^{n}\|
\]
so that for any $n\geq0$
\[
\zeta^{n+1}
\leq \zeta^n+\eps\,\|\bF_{\ubx}^{n+3/2}-\bF_{\ubx}^{n+1/2}\|\,.
\]
To proceed we observe that
\begin{align*}
\eps\,\|\bF_{\ubx}^{n+3/2}-\bF_{\ubx}^{n+1/2}\|
&\lesssim \eps\,\|\ubx^{n+3/2}-\ubx^{n+1/2}\|
\lesssim \eps\,\|\bfx^{n+2}-\bfx^{n+1}\|+\eps\,\|\bfx^{n+1}-\bfx^{n}\|\\
&\lesssim \Delta t\,\left(\|\bfz^{n+1}\|+\|\bfz^{n}\|+\eps\,+\eps\,\|\ubw^{n+3/2}\|^2
+\eps\,\|\ubw^{n+1/2}\|^2\right)\\
&\lesssim \Delta t\,\left(\zeta^{n+1}+\zeta^{n}
+\eps+\eps\,\left(\|\bfw^{n+2}\|^2+\|\bfw^{n+1}\|^2+\|\bfw^{n}\|^2\right)\right)\,.
\end{align*}

To complete the analysis we study now the variations of $\eps\,\|\bfw^{n}\|^2$. To do so we first observe that by taking the scalar product of the $\bfw$ equation of \eqref{scheme:CN} with $\ubw^{n+1/2}$ one obtains
\[
\eps\,\|\bfw^{n+1}\|^2-\eps\,\|\bfw^{n}\|^2
\,=\,2\,\Delta t\,\bE_{\bfx}^{n+1/2}\cdot \ubw^{n+1/2}\,,
\]
so that
\[
\eps\,\|\bfw^{n+1}\|^2-\eps\,\|\bfw^{n}\|^2
\lesssim \Delta t\,\left(\zeta^{n}+\eps\right)
\]

Introducing for any $n$
\[
Z^n:=\zeta^n+\eps+\eps\,\|\bfw^{n+1}\|^2+\eps\,\|\bfw^{n}\|^2
\]
and gathering the above estimates we deduce that for some constant $K$ and any $n$
\[
Z^{n+1}
\leq Z^n
+K\,\Delta t\,\left(
Z^{n+1}+Z^{n}
\right)\,.
\]
Assuming that $K\Delta t\leq 1/2$, one derives that for any $n$
\[
Z^{n+1}
\leq \frac{1+K\Delta t}{1-K\Delta t}\,Z^n = \left(1+\frac{2K\Delta t}{1-K\Delta t}\right)\,Z^n
\leq (1+4K\Delta t)\,Z^n\,.
\]
By iteration one deduces that for any $n$
\[
\|\ubw^{n+1/2}\|
\lesssim \|\bfz^n\|+\eps
\lesssim \frac{Z^n}{\sqrt{1+\lambda^2}}+\eps
\lesssim \frac{(1+4K\Delta t)^n\,Z^0}{\sqrt{1+\lambda^2}}+\eps
\lesssim \eD^{4K\,n\,\Delta t}\left(\|\ubw^{1/2}\|+\eps\right)\,.
\]

To conclude the bound on $\|\ubw^{n+1/2}\|$, there only remains to estimate $\|\ubw^{1/2}\|$. Once again we go back to the $\bfw$ equation of \eqref{scheme:CN} and deduce this time
\[
\ubw^{1/2}\,=\,(\Id+\tfrac12\lambda b_{\bfx}^{1/2}\bJ)^{-1}\,\left(
\bfw^0+\tfrac12\eps\lambda\,\bE_{\ubx}^{1/2}
\right)\,.
\]
Thus, as expected,
\[
\|\ubw^{1/2}\|\lesssim \frac{1}{\sqrt{1+\lambda^2}}+\eps
\]
and the claimed bound on $\|\ubw^{n+1/2}\|$ holds.

\subsection*{Bound on $\bfw^n$} For the next step of the proof the starting point is the $\bfw$ equation of \eqref{scheme:CN} written as
\[
\left(\Id-\tfrac12\lambda\,b_{\ubx}^{n+1/2}\,\bJ\right)\,\bfw^{n+1}
\,=\,
\left(\Id+\tfrac12\lambda\,b_{\ubx}^{n+1/2}\,\bJ\right)
\bfw^n
-\eps\,\lambda\,b_{\ubx}^{n+1/2}\,\bJ\bF_{\ubx}^{n+1/2}\,.
\]
Introducing for any $n\geq 1$
\[
\bfu^n:=\bfw^n-\eps\,\bJ\bF_{\ubx}^{n-1/2}
\]
one deduces that for any $n\geq 1$
\[
\left(\Id-\tfrac12\lambda\,b_{\ubx}^{n+1/2}\,\bJ\right)\,\bfu^{n+1}
\,=\,
\left(\Id+\tfrac12\lambda\,b_{\ubx}^{n+1/2}\,\bJ\right)
\bfu^n
-\eps\,\left(\Id+\tfrac12\lambda\,b_{\ubx}^{n+1/2}\,\bJ\right)(\bF_{\ubx}^{n+1/2}-\bF_{\ubx}^{n-1/2})
\,.
\]
so that 
\[
\|\bfu^{n+1}\|
\leq\|\bfu^{n}\|+\eps\,\|\bF_{\ubx}^{n+1/2}-\bF_{\ubx}^{n-1/2}\|\,.
\]

Benefiting from the bounds of the previous step we obtain for any $n\geq 1$ such that $n\Delta t\leq T$
\[
\|\bfu^{n+1}\|
-\|\bfu^{n}\|
\lesssim \Delta t\,\left(\frac{1}{\sqrt{1+\lambda^2}}+\eps\right)\,.
\]
By iteration this gives for any $n\geq 1$ such that $n\Delta t\leq T$
\begin{align*}
\|\bfw^n\|
&\lesssim \|\bfu^{n}\|+\eps
\lesssim \|\bfu^{1}\|+\eps+n\Delta t\,\left(\frac{1}{\sqrt{1+\lambda^2}}+\eps\right)\\
&\lesssim \|\bfw^{1}\|+\frac{1}{\sqrt{1+\lambda^2}}+\eps
\lesssim \|\bfw^{0}\|+\frac{1}{\sqrt{1+\lambda^2}}+\eps\lesssim 1\,,
\end{align*}
since $\bfw^1=\bfw^0+2\,\ubw^{1/2}$.

\subsection*{Bounds on $\bfx^{n+1}-\bfx^n$ and $\bfw^{n+1}-\bfw^n$} The last step follows readily from 
\begin{align*}
\bfx^{n+1}-\bfx^n&\,=\,\eps\,\lambda\,\ubw^{n+1/2}
\,-\,\Delta t\,\left(\ue^{n+1/2} \,-\, \dfrac{1}{2} \| \ubw^{n+1/2} \|^{2}\right)\nabla_{\bfx}^{\perp}\left(\dfrac1b\right)(\ubx^{n+1/2})\\
\eps\,(\bfw^{n+1}-\bfw^n)&\,=\,-\eps\,\lambda\,b_{\ubx}^{n+1/2}\,\bJ\,\ubw^{n+1/2}
\,+\,\Delta t\,\bE_{\ubx}^{n+1/2}
\end{align*}
and the fact that when $0\leq n\Delta t\leq T$
\[
\eps\,\lambda\,\|\ubw^{n+1/2}\|
\lesssim \eps\frac{\lambda}{\sqrt{1+\lambda^2}}+\eps^2\lambda \lesssim \min\left(\left\{\dfrac{\Delta t}{\eps};\eps\right\}\right)+\Delta t\,. 
\]

\section{Asymptotic analysis}\label{s:disc-x}

The goal of the present section is to provide a discrete counterpart to Proposition~\ref{p:cont-x}. With uniform bounds in hands, the main ingredients to add are multilinear elimination lemmas.

To prepare those we first gather bilinear discrete differentiation and averaging rules
$$
\left\{
  \begin{array}{l}
\ds  (ab)^{n+1}-(ab)^n\,=\,\ua^{n+1/2}\,(b^{n+1}-b^n)\,+\,(a^{n+1}-a^n)\,\ub^{n+1/2}\,,
  \\[0.8em]
\ds \left(\underline{ab} \right)^{n+1/2}\,=\,\ua^{n+1/2}\,\ub^{n+1/2}\,
+\,\frac14\,(a^{n+1}-a^n)\,(b^{n+1}-b^n)\,.
\end{array}\right.
$$

From \eqref{scheme:CN} stems the following discrete counterpart to \eqref{eq:velocity}
\begin{align} \label{scheme:velocity}
\ubw^{n+1/2}\, = \, \dfrac{\eps^{2}}{\Delta t}\, \dfrac{(\bfw^{n+1})^\perp-(\bfw^{n})^\perp}{b_{\ubx}^{n+1/2}}\,-\, \eps \,\left(\dfrac{\bE^{\perp}}{b}\right)_{\ubx}^{n+1/2}\,.
\end{align}
In turn, straightforward computations imply the following discrete counterparts to Lemmas~\ref{le:cont-1st} and~\ref{le:cont-2nd}.

\begin{lemma} \label{le:disc-1st}
Consider $\cL$ a differentiable function of space with values in linear maps from $\R^2$ to some $\R^n$, $n\in\N^*$. Then solutions to \eqref{scheme:CN} satisfy
\begin{align*}
(\cL_{\ubx}(\ubw))^{n+1/2} 
&\, = \,
-\eps^{2}\dfrac{(\kappa_{\cL}(\bfx,\bfw))^{n+1}-(\kappa_{\cL}(\bfx,\bfw))^{n}}{\Delta t}
\,+\,\eps\,(\eta_{\cL}(\ubx,\ue,\ubw))^{n+1/2}\\
&\quad
\,+\,\dfrac{\eps^2}{\Delta t}\,\tau_{\cL}(\bfx^n,\bfx^{n+1},\bfw^n,\bfw^{n+1})\,,
\end{align*}
where  $\kappa_{\cL}$ is given by
\begin{align*} 
\kappa_{\cL}(\bfx, \bfv) &
                           \,:=\,-\dfrac{1}{b(\bfx)}\cL(\bfx)(\bfv^{\perp})\,,
                           \end{align*}
whereas the couple $\left(\eta_{\cL}, \tau_{\cL}\right)$ is defined as
\begin{align*} 
\eta_{\cL}(\bfx,e,\bfv) &\,:=\,-\,\left(\dD_{\bfx}\left(\dfrac{\cL}{b}\right)(\bfx)(\bfv)\right)\,(\bfv^{\perp})
-\dfrac{1}{b(\bfx)}\,\cL(\bfx)(\bE^{\perp}(\bfx))\\
&\quad
\,+\,\eps\,\left(e-\tfrac12\|\bfv\|^{2}\right)\,\left(\dD_{\bfx}\left(\dfrac{\cL}{b}\right)(\bfx)\left(\nabla_{\bfx}^\perp\left(\frac1b\right)(\bfx)\right)\right)(\bfv^{\perp})\,,\\
\end{align*}
and 
\begin{align*}
  \tau_{\cL}(\bfx,\bfx',\bfv,\bfv') & \,:=\,
-\,\left(\dfrac{\cL}{b}(\bfx')-\dfrac{\cL}{b}(\bfx)
-\dD_{\bfx}\left(\dfrac{\cL}{b}\right)\left(\dfrac{\bfx'+\bfx}{2}\right)(\bfx'-\bfx)\right)\,\left(\left(\dfrac{\bfv'+\bfv}{2}\right)^\perp\right)\\
&\quad
\,-\,
\left(\frac12\left(\dfrac{\cL}{b}(\bfx')+\dfrac{\cL}{b}(\bfx)\right)-\dfrac{\cL}{b}\left(\dfrac{\bfx'+\bfx}{2}\right)\right)
\left((\bfv')^\perp-\bfv^\perp\right)\,.
\end{align*}
\end{lemma}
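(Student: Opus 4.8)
The plan is to obtain the identity as a purely algebraic rearrangement of the discrete velocity formula \eqref{scheme:velocity}, organised exactly as the proof of the continuous Lemma~\ref{le:cont-1st} from \eqref{eq:velocity}; the single new feature is that the discrete product and chain rules are only approximate, and the resulting remainders are what gets collected into $\tau_{\cL}$. To begin with, I would evaluate the linear map $\cL(\ubx^{n+1/2})$ on both sides of \eqref{scheme:velocity}, which yields
\[
(\cL_{\ubx}(\ubw))^{n+1/2}\,=\,\frac{\eps^{2}}{\Delta t}\,\frac{\cL}{b}(\ubx^{n+1/2})\big((\bfw^{n+1})^{\perp}-(\bfw^{n})^{\perp}\big)\,-\,\eps\,\frac{\cL}{b}(\ubx^{n+1/2})\big(\bE^{\perp}(\ubx^{n+1/2})\big)\,.
\]
The last summand is already $\eps$ times the second contribution of $\eta_{\cL}$ evaluated at $(\ubx^{n+1/2},\ue^{n+1/2},\ubw^{n+1/2})$, so everything reduces to treating the first summand.

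Next I would peel off two discrete-calculus defects. First, replacing the midpoint value $\tfrac{\cL}{b}(\ubx^{n+1/2})$ by the average $\tfrac12\big(\tfrac{\cL}{b}(\bfx^{n+1})+\tfrac{\cL}{b}(\bfx^{n})\big)$ costs a midpoint-versus-average remainder which, applied to $(\bfw^{n+1})^{\perp}-(\bfw^{n})^{\perp}$ and then multiplied by $\eps^{2}/\Delta t$, is exactly the second term of $\tau_{\cL}$ weighted by $\eps^{2}/\Delta t$. For the average term that remains, the bilinear difference rule recalled just before the statement, used with the operator-valued factor $\tfrac{\cL}{b}(\bfx)$ and the vector-valued factor $\bfw^{\perp}$ and combined with $\kappa_{\cL}(\bfx,\bfv)=-\tfrac{1}{b(\bfx)}\cL(\bfx)(\bfv^{\perp})$, rewrites it as
\[
-\big((\kappa_{\cL}(\bfx,\bfw))^{n+1}-(\kappa_{\cL}(\bfx,\bfw))^{n}\big)\,-\,\Big(\frac{\cL}{b}(\bfx^{n+1})-\frac{\cL}{b}(\bfx^{n})\Big)\big((\ubw^{n+1/2})^{\perp}\big)\,,
\]
so that, after multiplication by $\eps^{2}/\Delta t$, the first bracket becomes the announced $-\eps^{2}/\Delta t$ times the discrete difference of $\kappa_{\cL}$, and there remains the leftover $-\tfrac{\eps^{2}}{\Delta t}\big(\tfrac{\cL}{b}(\bfx^{n+1})-\tfrac{\cL}{b}(\bfx^{n})\big)\big((\ubw^{n+1/2})^{\perp}\big)$.

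To conclude, in that leftover I would write $\tfrac{\cL}{b}(\bfx^{n+1})-\tfrac{\cL}{b}(\bfx^{n})=\dD_{\bfx}\big(\tfrac{\cL}{b}\big)(\ubx^{n+1/2})(\bfx^{n+1}-\bfx^{n})$ plus a cubic midpoint remainder; that remainder, applied to $(\ubw^{n+1/2})^{\perp}$ and multiplied by $\eps^{2}/\Delta t$, is exactly the first term of $\tau_{\cL}$ weighted by $\eps^{2}/\Delta t$. Into the principal part I would then insert the first line of \eqref{scheme:CN},
\[
\bfx^{n+1}-\bfx^{n}\,=\,\frac{\Delta t}{\eps}\,\ubw^{n+1/2}\,-\,\Delta t\,\Big(\ue^{n+1/2}-\tfrac12\|\ubw^{n+1/2}\|^{2}\Big)\,\nabla_{\bfx}^{\perp}\Big(\frac1b\Big)(\ubx^{n+1/2})\,.
\]
The prefactor $\eps^{2}/\Delta t$ then cancels the $\Delta t/\eps$ and produces $-\eps\,\dD_{\bfx}\big(\tfrac{\cL}{b}\big)(\ubx^{n+1/2})(\ubw^{n+1/2})\big((\ubw^{n+1/2})^{\perp}\big)$, i.e.\ $\eps$ times the first contribution of $\eta_{\cL}$, and cancels the $\Delta t$ against an $\eps^{2}$ to produce $\eps^{2}\big(\ue^{n+1/2}-\tfrac12\|\ubw^{n+1/2}\|^{2}\big)\,\dD_{\bfx}\big(\tfrac{\cL}{b}\big)(\ubx^{n+1/2})\big(\nabla_{\bfx}^{\perp}(\tfrac1b)(\ubx^{n+1/2})\big)\big((\ubw^{n+1/2})^{\perp}\big)$, i.e.\ $\eps$ times the third contribution of $\eta_{\cL}$. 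Adding the four resulting pieces reconstitutes the stated identity.

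I do not expect a genuine obstacle, since the statement is a reformulation of \eqref{scheme:velocity} rather than an estimate; this is also why the excerpt can legitimately call such computations straightforward. The one point demanding care — and the place where a sign or a power of $\eps$ is easiest to spoil — is the bookkeeping of the prefactor $\eps^{2}/\Delta t$ through the two Taylor-type substitutions, so that the residual weights come out as $\eps$ for the three (formally continuous-looking) terms of $\eta_{\cL}$, all evaluated at the midpoints, and as $\eps^{2}/\Delta t$ for the two midpoint-remainder terms of $\tau_{\cL}$, and so that the $\bfx$-equation of \eqref{scheme:CN} is substituted precisely where the factor $\bfx^{n+1}-\bfx^{n}$ has surfaced and nowhere else.
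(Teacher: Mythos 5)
Your proof is correct and follows exactly the approach the paper intends: the paper presents Lemma~\ref{le:disc-1st} (together with Lemma~\ref{le:disc-2nd}) as following from ``straightforward computations'' starting from \eqref{scheme:velocity}, and your proof is the correct unwinding of that omitted computation, with the discrete product rule, the two midpoint-versus-average/Taylor remainders collected into $\tau_{\cL}$, and the substitution of the $\bfx$-equation producing the first and third contributions of $\eta_{\cL}$, all in the right places and with the right signs and $\eps$-weights.
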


\begin{lemma} \label{le:disc-2nd}
Consider $\cB$ a differentiable function of space with values in bilinear maps from $\R^2\times\R^2$ to some $\R^n$, $n\in\N^*$. Then solutions to \eqref{scheme:CN} satisfy
\begin{align*}
(\cB_{\ubx}(\ubw,\ubw))^{n+1/2} 
&\, = \,\left(\frac12\|\ubw\|^2\,\Tr(\cB_\ubx)\right)^{n+1/2}
-\eps^{2}\dfrac{(\kappa_{\cB}(\bfx,\bfw))^{n+1}-(\kappa_{\cB}(\bfx,\bfw))^{n}}{\Delta t}
\,+\,\eps\,(\eta_{\cB}(\ubx,\ue,\ubw))^{n+1/2}\\
&\quad
\,+\,\dfrac{\eps^2}{\Delta t}\,\tau_{\cB}(\bfx^n,\bfx^{n+1},\bfw^n,\bfw^{n+1})\,,
\end{align*}
where
\begin{align*} 
\kappa_{\cB}(\bfx, \bfv) &\,:=\,-\frac12\,\dfrac{1}{b(\bfx)}\cB(\bfx)(\bfv,\bfv^{\perp})\,,\\
\end{align*}
whereas  $\eta_{\cB}$ is given as
\begin{align*}
  \eta_{\cB}(\bfx,e,\bfv) &\,:=\, -\,\frac12\,\left(\dD_{\bfx}\left(\dfrac{\cB}{b}\right)(\bfx)(\bfv)\right)\,(\bfv,\bfv^{\perp})
-\frac12\dfrac{1}{b(\bfx)}\,\left(\cB(\bfx)(\bfv,\bE^{\perp}(\bfx))+\cB(\bfx)(\bE(\bfx),\bfv^{\perp})\right)\\
&\qquad
+\frac12\,\eps\,\left(e-\tfrac12\|\bfv\|^{2}\right)\,\left(\dD_{\bfx}\left(\dfrac{\cB}{b}\right)(\bfx)\left(\nabla_{\bfx}^\perp\left(\frac1b\right)(\bfx)\right)\right)(\bfv,\bfv^{\perp})\,,\\
\end{align*}
and
\begin{align*}
  \tau_{\cB}(\bfx,\bfx',\bfv,\bfv') & \,:=\,
-\,\frac12\,\left(\dfrac{\cB}{b}(\bfx')-\dfrac{\cB}{b}(\bfx)
-\dD_{\bfx}\left(\dfrac{\cB}{b}\right)\left(\dfrac{\bfx'+\bfx}{2}\right)(\bfx'-\bfx)\right)\,\left(\dfrac{\bfv'+\bfv}{2},\left(\dfrac{\bfv'+\bfv}{2}\right)^\perp\right)\\
&\qquad
\,-\,
\frac12\left(\frac12\left(\dfrac{\cB}{b}(\bfx')+\dfrac{\cB}{b}(\bfx)\right)-\dfrac{\cB}{b}\left(\dfrac{\bfx'+\bfx}{2}\right)\right)\left(\dfrac{\bfv'+\bfv}{2},(\bfv')^\perp-\bfv^\perp\right)\\
&\qquad
\,-\,
\frac12\left(\frac12\left(\dfrac{\cB}{b}(\bfx')+\dfrac{\cB}{b}(\bfx)\right)-\dfrac{\cB}{b}\left(\dfrac{\bfx'+\bfx}{2}\right)\right)
\left(\bfv'-\bfv,\left(\dfrac{\bfv'+\bfv}{2}\right)^\perp\right)\\
&\qquad
\,-\,\frac18\,
\left(\dfrac{\cB}{b}(\bfx')-\dfrac{\cB}{b}(\bfx)\right)\left(\bfv'-\bfv,(\bfv')^\perp-\bfv^\perp\right)\,.
\end{align*}
\end{lemma}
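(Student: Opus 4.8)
The plan is to mirror the proof of Lemma~\ref{le:cont-2nd}, the only genuinely new feature being the defect term $\tau_{\cB}$, which encodes the failure of an exact discrete chain and product rule. First I would dispose of the symmetric part algebraically: the pointwise identity~\eqref{eq:trace}, applied at the point $\ubx^{n+1/2}$ to the vector $\ubw^{n+1/2}$, gives
\[
(\cB_{\ubx}(\ubw,\ubw))^{n+1/2}-\left(\tfrac12\|\ubw\|^{2}\Tr(\cB_{\ubx})\right)^{n+1/2}
=\tfrac12\Big((\cB_{\ubx}(\ubw,\ubw))^{n+1/2}-(\cB_{\ubx}(\ubw^{\perp},\ubw^{\perp}))^{n+1/2}\Big),
\]
so it remains to identify the antisymmetrised right-hand side with $-\eps^{2}\big((\kappa_{\cB}(\bfx,\bfw))^{n+1}-(\kappa_{\cB}(\bfx,\bfw))^{n}\big)/\Delta t+\eps\,(\eta_{\cB}(\ubx,\ue,\ubw))^{n+1/2}+\tfrac{\eps^{2}}{\Delta t}\,\tau_{\cB}$.

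Next I would expand the discrete increment $(\kappa_{\cB}(\bfx,\bfw))^{n+1}-(\kappa_{\cB}(\bfx,\bfw))^{n}$ of the primitive $\kappa_{\cB}(\bfx,\bfw)=-\tfrac12\,(\cB/b)(\bfx)(\bfw,\bfw^{\perp})$ by iterating the bilinear differentiation and averaging rules recorded at the start of this section, treating $(\cB/b)(\bfx)$, $\bfw$ and $\bfw^{\perp}$ as the three factors. This produces a term in which $(\cB/b)(\bfx)$ is differenced, a term in which $\bfw$ is differenced, a term in which $\bfw^{\perp}$ is differenced, and remainders quadratic in the increments arising from the conversion between products of averages and averages of products. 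In the increments $\bfw^{n+1}-\bfw^{n}$ and $(\bfw^{n+1})^{\perp}-(\bfw^{n})^{\perp}$ I would substitute the $\bfw$-line of~\eqref{scheme:CN} (equivalently the discrete velocity relation~\eqref{scheme:velocity}); each substitution replaces the increment by $\tfrac{\Delta t}{\eps}\bE_{\ubx}^{n+1/2}$ up to a rotation, plus $-\tfrac{\Delta t}{\eps^{2}}b_{\ubx}^{n+1/2}$ times a rotation of $\ubw^{n+1/2}$, and the factor $\tfrac{\Delta t}{\eps^{2}}$ so produced cancels the outer $-\eps^{2}/\Delta t$.

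The remaining work is bookkeeping. Using $(\bfz^{\perp})^{\perp}=-\bfz$, the two $\bfw$-difference terms produce, to leading order, the antisymmetrised combination $\tfrac12\big(\cB_{\ubx}(\ubw,\ubw)-\cB_{\ubx}(\ubw^{\perp},\ubw^{\perp})\big)$ at the half-step; the $O(\eps)$ pieces they leave behind --- together with the $(\cB/b)(\bfx)$-difference term, in which $(\cB/b)(\bfx^{n+1})-(\cB/b)(\bfx^{n})$ is split as $\dD_{\bfx}(\cB/b)(\ubx^{n+1/2})(\bfx^{n+1}-\bfx^{n})$ plus a second-order remainder, and $\bfx^{n+1}-\bfx^{n}$ is read off from the $\bfx$-line of~\eqref{scheme:CN}, its $\ubw^{n+1/2}/\eps$ part feeding the $\dD_{\bfx}(\cB/b)$ summand of $\eta_{\cB}$ and its ghost part $-\big(\ue^{n+1/2}-\tfrac12\|\ubw^{n+1/2}\|^{2}\big)\nabla_{\bfx}^{\perp}(1/b)(\ubx^{n+1/2})$ feeding the last summand of $\eta_{\cB}$ --- reassemble, after one further application of the product rule, into $\eps\,(\eta_{\cB}(\ubx,\ue,\ubw))^{n+1/2}$; here the recombination $\dD_{\bfx}(1/b)\cdot\cB+b^{-1}\dD_{\bfx}\cB=\dD_{\bfx}(\cB/b)$ plays the role of the $\tfrac{\dD\bB}{\dD t}$ manipulation in the proof of Lemma~\ref{le:cont-2nd}. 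Everything not thus accounted for is a finite-difference defect --- the second-order Taylor remainder of $\cB/b$ about $\ubx^{n+1/2}$, the mismatch between $\tfrac12\big((\cB/b)(\bfx^{n+1})+(\cB/b)(\bfx^{n})\big)$ and $(\cB/b)(\ubx^{n+1/2})$, and the cross term quadratic in the increments of $\bfw$, each paired with bilinear arguments built from $\ubw^{n+1/2}$ and $\bfw^{n+1}-\bfw^{n}$ --- and collecting these with the prefactor $\eps^{2}/\Delta t$ produces precisely $\tau_{\cB}(\bfx^{n},\bfx^{n+1},\bfw^{n},\bfw^{n+1})$.

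The main difficulty I anticipate is exactly this last reorganisation. The discrete Leibniz rule is associative but obeys no exact chain rule, so one must carefully separate the many terms it generates into those that are leading order and must rebuild the antisymmetrised quadratic form, those that are $O(\eps)$ and must match $\eta_{\cB}$ --- in particular one must check that the ghost term of~\eqref{scheme:CN} produces exactly the last summand of $\eta_{\cB}$ --- and the genuine finite-difference defects, which moreover have to be grouped into a form for $\tau_{\cB}$ manifestly controlled by products of $\|\bfx^{n+1}-\bfx^{n}\|$ and $\|\bfw^{n+1}-\bfw^{n}\|$, so that, combined with the increment bounds of Proposition~\ref{p:unif} and the $\eps^{2}/\Delta t$ prefactor, it contributes only a controlled consistency error in the downstream analysis. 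The substitutions and algebraic cancellations themselves are routine, exactly as in the continuous case.
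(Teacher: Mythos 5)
Your proposal is correct and follows essentially the route the paper implicitly relies on: the paper only remarks that Lemmas~4.1--4.2 follow from "straightforward computations" and supplies the bilinear discrete product/averaging rules and the discrete velocity relation~\eqref{scheme:velocity} as the needed tools. Your organisation --- first invoke the trace identity~\eqref{eq:trace} at $(\ubx^{n+1/2},\ubw^{n+1/2})$ to split off the slow piece $\tfrac12\|\ubw\|^2\Tr(\cB_\ubx)$, then compute $-\eps^2\big((\kappa_{\cB})^{n+1}-(\kappa_{\cB})^{n}\big)/\Delta t$ by iterating the discrete Leibniz rule on the three factors $(\cB/b)(\bfx)$, $\bfw$, $\bfw^\perp$, substitute the $\bfw$-line of~\eqref{scheme:CN} in the velocity increments and the $\bfx$-line in the $(\cB/b)$ increment, and sort the output into the leading antisymmetrised form, the $O(\eps)$ pieces constituting $\eps\,\eta_{\cB}$, and the genuine finite-difference defects constituting $\tfrac{\eps^2}{\Delta t}\tau_{\cB}$ --- reproduces exactly the structure of the claimed identity, and I have checked that the signs of the ghost contribution (last summand of $\eta_{\cB}$) and of each of the four defects in $\tau_{\cB}$ come out as stated. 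Your observation that the recombination $\dD_\bfx(1/b)\,\cB + b^{-1}\dD_\bfx\cB=\dD_\bfx(\cB/b)$ replaces the explicit $\tfrac{\dD\bB}{\dD t}$ term of the continuous Lemma~\ref{le:cont-2nd} is the right way to see why $\eta_{\cB}$ takes a different (more compact) form here, and is worth retaining in a write-up.
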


Here we apply the foregoing lemmas with exactly the same choices as in the proof of Proposition~\ref{p:cont-x}, explicitly $\cL_1$ constant with value the identity map and $\cB_1$ defined by 
\[
\cB_1(\bfx)(\bfv_1,\bfv_2):=
-\dD_{\bfx}\left(\dfrac1b\right)(\bfx)(\bfv_1)\,\bfv_2^{\perp}\,.
\]

This allows us to transform the $\bfx$ equation of \eqref{scheme:CN} into
\begin{align}\label{eq:disc-x2}
\dfrac{(\bfx+\eps\,\kappa_1)^{n+1}-(\bfx+\eps\,\kappa_1)^{n}}{\Delta t}
&\,=\, \left(\bF_{\ubx}\,-\,\ue \left(\nabla_{\bfx}^{\perp}\left(\dfrac1b\right)\right)_{\ubx}\right)^{n+1/2}
+\,\eps\,(\eta_1^n+\tau_1^n)\,,
\end{align}
with for any $n$
\begin{align*}
\|\kappa_1^n\|&\lesssim
\|\bfw^n\|\,(1+\eps\,\|\bfw^n\|)\\[0.8em]
\|\eta_1^n\|&\lesssim \|\ubw^{n+1/2}\|
+\|\ubw^{n+1/2}\|\,(1+\eps\,\|\ubw^{n+1/2}\|)\,(|\ue^{n+1/2}|+\|\ubw^{n+1/2}\|^2)\\[0.8em]
\|\tau_1^n\|&\lesssim
\dfrac{\|\bfx^{n+1}-\bfx^n\|}{\Delta t}
\Big(
\|\bfx^{n+1}-\bfx^n\|^2\,\|\ubw^{n+1/2}\|\,(1+\eps\,\|\ubw^{n+1/2}\|)\\[0.8em]
&\qquad
+\|\bfx^{n+1}-\bfx^n\|\,\|\bfw^{n+1}-\bfw^n\|\,(1+\eps\,\|\ubw^{n+1/2}\|)
+\eps\,\|\bfw^{n+1}-\bfw^n\|^2
\Big)\,.
\end{align*}

This is sufficient to prove the following discrete counterpart to Proposition~\ref{p:cont-x}.

\begin{proposition}\label{p:disc-x}
Let $T>0$, $M>0$, $\eps_0>0$. There exist $\delta_0>0$ and $C>0$ such that when $0<\eps\leq\eps_0$ and $0<\Delta t\leq \delta_0$ if $(\bfx,e,\bfw)$ solves \eqref{scheme:CN} with initial datum $(\bfx^0,e^0,\bfw^0)$ such that $\|\bfv^0\|\leq M$, $|e^0|\leq \tfrac12M^2$ and $(\bfy,g)$ solves \eqref{scheme:asymp} with initial datum $(\bfy^0,g^0)=(\bfx^0,e^0)$ then when $0\leq n\,\Delta t\leq T$,
\begin{align*}
\|(\bfx^n,e^n)-(\bfy^n,g^n)\|\,\leq\,C\,\eps\,.
\end{align*}
\end{proposition}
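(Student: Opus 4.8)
The plan is to mimic the proof of Proposition~\ref{p:cont-x} at the discrete level, starting from the reformulation \eqref{eq:disc-x2} of the $\bfx$-equation of \eqref{scheme:CN} and feeding in the uniform bounds of Proposition~\ref{p:unif}. Write $\widehat{\bfx}^n:=\bfx^n+\eps\,\kappa_1^n$; by the displayed bound on $\kappa_1$ together with $\|\bfw^n\|\lesssim M'$ one has $\|\widehat{\bfx}^n-\bfx^n\|\lesssim\eps$, so it suffices to estimate $\theta^n:=\|\widehat{\bfx}^n-\bfy^n\|$. First I would record the two \emph{exact} discrete energy identities: \eqref{scheme:CN} gives $e^{n}+\phi(\bfx^{n})=e^0+\phi(\bfx^0)$ and \eqref{scheme:asymp} gives $g^{n}+\phi(\bfy^{n})=g^0+\phi(\bfy^0)$; since $(\bfy^0,g^0)=(\bfx^0,e^0)$ these yield $e^n-g^n=-(\phi(\bfx^n)-\phi(\bfy^n))$, hence $|g^n|\lesssim M'$ and $|e^n-g^n|\lesssim\|\bfx^n-\bfy^n\|\lesssim\theta^n+\eps$. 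Next, subtracting the first line of \eqref{scheme:asymp} (rewritten with $\bF=-\bE^\perp/b$) from \eqref{eq:disc-x2}, and using that $\ubx^{n+1/2}-\uby^{n+1/2}=\tfrac12\big((\widehat{\bfx}^{n+1}-\bfy^{n+1})+(\widehat{\bfx}^{n}-\bfy^{n})\big)-\tfrac{\eps}{2}(\kappa_1^{n+1}+\kappa_1^{n})$ is $\cO\big(\tfrac12(\theta^{n+1}+\theta^n)+\eps\big)$, the Lipschitz character of $\bF$, $\nabla_{\bfx}^\perp(1/b)$, $\nabla_{\bfx}\phi$ together with $|\ue^{n+1/2}|,|\ug^{n+1/2}|\lesssim M'$ give
\[
\frac{\theta^{n+1}-\theta^n}{\Delta t}\,\lesssim\,\theta^{n+1}+\theta^n+\eps+\eps\|\eta_1^n\|+\eps\|\tau_1^n\|\,.
\]

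The heart of the matter is to show $\eps\|\eta_1^n\|+\eps\|\tau_1^n\|\lesssim\eps$ uniformly in $(\eps,\Delta t)$. For $\eta_1$ this follows at once from its displayed bound, $\|\ubw^{n+1/2}\|\lesssim(1+\lambda^2)^{-1/2}+\eps\lesssim1$ and $|\ue^{n+1/2}|\lesssim M'$. For $\tau_1$ the point — which is the genuinely new feature compared with \cite{FiRo21} — is that one must \emph{not} bound $\|\bfw^{n+1}-\bfw^n\|$ through $\eps\|\bfw^{n+1}-\bfw^n\|\lesssim\min\{\Delta t/\eps;\Delta t+\eps\}$ (which would leave a spurious $(\Delta t)^2$) but simply use the boundedness $\|\bfw^{n+1}-\bfw^n\|\lesssim M'$. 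Combining this with $\|\bfx^{n+1}-\bfx^n\|\lesssim\mu:=\min\{\Delta t/\eps;\Delta t+\eps\}$ and $\|\ubw^{n+1/2}\|\lesssim\nu:=(1+\lambda^2)^{-1/2}+\eps$, the displayed bound for $\tau_1$ yields
\[
\eps\|\tau_1^n\|\,\lesssim\,\frac{\eps\,\mu}{\Delta t}\,\big(\mu^2\,\nu+\mu+\eps\big)\,,
\]
and a short case distinction closes the claim: when $\Delta t\le\eps^2$ the prefactor is $\cO(1)$, $\nu\lesssim1$, $\mu=\Delta t/\eps\lesssim\eps$ and every term in the bracket is $\lesssim\eps$; when $\Delta t\ge\eps^2$ the prefactor is $\lesssim\eps+\eps^2/\Delta t$, $\nu\lesssim\eps^2/\Delta t+\eps$ and, using $\Delta t\le\delta_0$ and $\eps^3/\Delta t\le\eps$, one again reaches $\lesssim\eps$. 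Then the inequality above becomes $\theta^{n+1}\le(1+C\Delta t)\theta^n+C\Delta t\,\eps$ once $\Delta t\le\delta_0$ is small enough to absorb the $\theta^{n+1}$ term; since $\theta^0=\eps\|\kappa_1^0\|\lesssim\eps$, the discrete Gr\"onwall lemma gives $\theta^n\lesssim\eps$ for $0\le n\Delta t\le T$, whence $\|(\bfx^n,e^n)-(\bfy^n,g^n)\|\le\|\bfx^n-\bfy^n\|+|e^n-g^n|\lesssim\theta^n+\eps\lesssim\eps$.

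The main obstacle is exactly the uniform control of the consistency term $\eps\|\tau_1^n\|$: the discrete chain-rule defect recorded in $\tau_1$ carries an apparent $(\Delta t)^2$ that has to be seen harmless for \emph{every} balance between $\Delta t$ and powers of $\eps$, which is possible only because Proposition~\ref{p:unif} supplies simultaneously the sharp increment $\|\bfx^{n+1}-\bfx^n\|\lesssim\mu$, the smallness $\|\ubw^{n+1/2}\|\lesssim\nu$, and the crude but crucial boundedness of $\bfw^{n+1}-\bfw^n$; a rougher stability input fed into the $\tau_1$ bound would produce a spurious $(\Delta t)^2$ term in the final estimate. Everything else — the Lipschitz estimates, the exact discrete energy identity, and the discrete Gr\"onwall lemma — is routine and parallels the continuous argument of Proposition~\ref{p:cont-x}.
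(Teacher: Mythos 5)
Your proposal is correct and follows the same route as the paper's own proof: insert the normal-form reformulation \eqref{eq:disc-x2} of the $\bfx$-update, feed in the uniform bounds of Proposition~\ref{p:unif} to control $\kappa_1$, $\eta_1$, and especially $\tau_1$, and close by difference estimate and discrete Gr\"onwall, recovering $e^n-g^n$ from the exact discrete energy identities. Your emphasis that one must bound $\|\bfw^{n+1}-\bfw^n\|$ by boundedness (rather than by the sharper $\eps$-weighted increment estimate, which degenerates when $\Delta t\gg\eps$) is precisely what makes the $\tau_1^n$ bound uniform in $(\eps,\Delta t)$ and agrees with how the paper arrives at $\|\tau_1^n\|\lesssim\min\{(\Delta t+\eps)^2/\Delta t;\Delta t/\eps^2\}\lesssim 1+\Delta t$.
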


\begin{proof}
To begin with we invoke Proposition~\ref{p:unif} to conclude that when $0\leq n\,\Delta t\leq T$ and $\Delta t$ is sufficiently small (independently of $\eps$), $\kappa_1^n$ and $\eta_1^n$ are uniformly bounded whereas
\[
\|\tau_1^n\|\lesssim \min\left(\left\{\dfrac{(\Delta t+\eps)^2}{\Delta t}; \dfrac{\Delta t}{\eps^2}\right\}\right)
\lesssim \min\left(\left\{\Delta t+\dfrac{1}{\lambda};\lambda\right\}\right)\lesssim 1+\Delta t\,,
\] 
where again $\lambda=\Delta t/\eps^2$.

One may then proceed as in the proof of Proposition~\ref{p:cont-x}, summing \eqref{eq:disc-x2} instead of integrating \eqref{eq:cont-x2}, to deduce that for some constant $K$ (depending on $(T,M,\eps_0)$ and the fields but not on $(\eps,\Delta t)$), provided that $0\leq n\,\Delta t\leq T$ and $\Delta t$ is sufficiently small (independently of $\eps$),
\[
\|\bfx^n-\bfy^n\|
\leq K\,\Delta t\,\sum_{\ell=1}^n\,\|\bfx^\ell-\bfy^\ell\|\,+\,K\,\eps
\,,
\]
which implies provided that $\Delta t$ is small enough to enforce $K\,\Delta t\leq \tfrac12$,
\[
\|\bfx^n-\bfy^n\|
\leq 2\,K\,\Delta t\,\sum_{\ell=1}^{n-1}\,\|\bfx^\ell-\bfy^\ell\|\,+\,2\,K\,\eps
\,,
\]

One then concludes with a discrete Gr\"onwall lemma. Explicitly, set
\[
\xi^n\,:=\,\sum_{\ell=1}^{n-1}\,\|\bfx^\ell-\bfy^\ell\|\,+\,\dfrac{\eps}{\Delta t}\,
\]
so that for any $n$ such that $0\leq n\,\Delta t\leq T$
\[
\xi^{n+1}\leq (1+2 K\,\Delta t)\, \xi^n\,.
\]
This implies recursively that for any $n$ such that $0\leq n\,\Delta t\leq T$
\[
\xi^n \leq \eD^{2K\,n\Delta t}\,\xi^0\,\leq\,\eD^{2\,K\,T}\,\dfrac{\eps}{\Delta t}
\,.
\]
In turn one deduces that for any $n$ such that $0\leq n\,\Delta t\leq T$
\[
\|\bfx^n-\bfy^n\|\,\leq\,2K\Delta t\,\xi^n\,\leq\,\eps\,2K\,\eD^{2\,K\,T}
\,.
\]
Since $e^n-g^n=-(\phi(\bfx^n)-\phi(\bfy^n))$, this concludes the proof with $C=2K\,\eD^{2\,K\,T}\,(1+\|\nabla_{\bfx}\phi\|_{L^\infty})$.
\end{proof}

\section{Direct convergence analysis}\label{s:convergence}

Before diving into the direct convergence analysis a few comments are in order. The general strategy follows the classical path of analyzing separately consistency errors and stability estimates. For background on classical numerical analysis of ordinary differential equations we refer for instance to \cite{HNW-book,HW-book}.

On the consistency side the basic observations here are that for any smooth function of time $a$
\begin{align*}
\dfrac{a(t+\Delta t)+a(t)}{2}-a\left(t+\tfrac12\Delta t\right)
&\,=\,\frac18\,(\Delta t)^2\int_{-1}^1 \left(\dfrac{\dD}{\dD t}\right)^2a\left(t+\tfrac12(1+\tau)\Delta t\right)\,(1-|\tau|)\dD \tau\\
\dfrac{a(t+\Delta t)-a(t)}{\Delta t}-\dfrac{\dD a}{\dD t}\left(t+\tfrac12\Delta t\right)
&\,=\,\frac{1}{16}\,(\Delta t)^2\int_{-1}^1 \left(\dfrac{\dD}{\dD t}\right)^3a\left(t+\tfrac12(1+\tau)\Delta t\right)\,(1-|\tau|)^2\dD \tau\,.
\end{align*}
Thus if $a$ is solving 
\[
\dfrac{\dD a}{\dD t}\,=\,\cV(a)
\]
for some vector field $\cV$ then
\[
\left\|
\dfrac{a(t+\Delta t)-a(t)}{\Delta t}-\cV\left(\dfrac{a(t+\Delta t)+a(t)}{2}\right)
\right\|
\leq \frac{(\Delta t)^2}{8}\left(\|\nabla_a\cV\|_{L^\infty}\,\left\|\left(\dfrac{\dD}{\dD t}\right)^2a\right\|_{L^\infty}
+\frac13\,\left\|\left(\dfrac{\dD}{\dD t}\right)^3a\right\|_{L^\infty}\right).
\]
This rough classical computation is sufficient to justify that we are indeed studying a second-order scheme and if we were not interested in the dependency on $\eps$ or if we were studying the regime when $\eps$ is bounded away from zero it would be sufficient to conclude.

It is also sufficient to predict what are the best consistency estimates we can expect. Indeed there is indeed no way, even through a near identity change of variable, to improve on the fact that the $k$-th derivative of $\bfv$ scales like $\eps^{-2k}$ so that the consistency error for $\bfv$ cannot be better than $(\Delta t)^2\,\eps^{-6}$. In turn this implies that the consistency error for a slow variable coupled with $\eps^m\bfv$ for some $m$ cannot be improved beyond $(\Delta t)^2\,\eps^{m-6}$. In particular the consistency error for $\bfx$ is capped by $(\Delta t)^2\,\eps^{-5}$ whereas the one for $\GC$ cannot beat $(\Delta t)^2\,\eps^{-4}$. For a more general discussion we again refer to \cite[Appendix~A]{FiRo21}. In particular, for the sake of comparison note that when \cite{FiRo21} analyzes a first-order scheme the corresponding ceilings are respectively $(\Delta t)\,\eps^{-3}$ and $(\Delta t)\,\eps^{-2}$.

On the stability side note that the implicitation is designed to improve how perturbations on fast variables impact both slow and fast variables. The main remaining limitation then arises from how slow variables affect fast variables. The corresponding Lipschitz constant is mostly due to the fact that the amplitude of the magnetic field is non constant and scales as $\eps^{-2}$. As a consequence one cannot start the convergence analysis before a level of preparation comparable to the one achieved to prove Proposition~\ref{p:cont-GC} so as to enforce that the analysis is performed with slow variables and $\eps^2\bfv$ thus taming the slow-to-fast stiffness. For the sake of comparison we recall that \cite{FiRo21} analyzes the case when $b$ is constant so that the slow-to-fast Lipschitz constant scales as $\eps^{-1}$ and the analysis may be carried out with slow variables and $\eps\,\bfv$, requiring only a cheaper preparation, similar to the one to prove Propositions~\ref{p:cont-x} and~\ref{p:disc-x}.

Let us stress that this means that even if one is only interested in $\bfx$ (and not in $\GC$) heeding to stability considerations one must nevertheless perform a higher order preparation. This is the reason why we have expounded Proposition~\ref{p:cont-GC} and its proof even if we omit to state and prove its discrete counterpart.

\subsection{Stability analysis}

As in Section~\ref{s:cont-gc} we introduce a guiding center variable
\[
\GC:=\bfx-\,\eps\,\dfrac{\bfw^{\perp}}{b(\bfx)}
\]
and the associated modified kinetic energy
\[
\eGC:=e\,+\,\dfrac{\eps}{b(\bfx)}\,\bE^\perp(\bfx)\cdot\bfw\,.
\]
Note that with $\kappa_e$ as in \eqref{eq:cont-e}
\begin{align}\label{eq:disc-e}
\eGC^{n+1}+\phi(\GC^{n+1})+\eps^2\,\kappa_e(\bfx^{n+1},\bfw^{n+1})
\,=\,\eGC^{n}+\phi(\GC^{n})+\eps^2\,\kappa_e(\bfx^{n},\bfw^{n})\,.
\end{align}

To proceed we need both a discrete version of Lemma~\ref{le:cont-3rd} and an extension of Lemma~\ref{le:disc-1st} allowing for linear operators depending on more slow variables. To facilitate the derivation of the latter note that solutions to \eqref{scheme:CN} satisfy
\begin{align*}
\dfrac{\GC^{n+1}-\GC^{n}}{\Delta t}
&\,=\,\,-\,\left(\left(\ue-\tfrac12\|\ubw\|^2\right)\left(\nabla_{\bfx}^{\perp}\left(\dfrac1b\right)\right)_{\ubx}\right)^{n+1/2}+(\eta_{\cL_1}(\ubx,\ue,\ubw))^{n+1/2}\\
&\qquad
\,+\,\dfrac{\eps}{\Delta t}\,\tau_{\cL_1}(\bfx^n,\bfx^{n+1},\bfw^n,\bfw^{n+1})\,,
\end{align*}
with notation from Lemma~\ref{le:disc-1st} and $\cL_1\equiv \Id$.

\begin{lemma} \label{le:disc-3rd}
Consider $\cT$ a differentiable function of space with values in symmetric trilinear maps from $\R^2\times\R^2\times\R^2$ to some $\R^n$, $n\in\N^*$. Then solutions to \eqref{scheme:CN} satisfy
\begin{align*}
(\cT_{\ubx}(\ubw,\ubw,\ubw))^{n+1/2} 
&\, = \,
-\eps^{2}\dfrac{(\kappa_{\cT}(\bfx,\bfw))^{n+1}-(\kappa_{\cT}(\bfx,\bfw))^{n}}{\Delta t}
\,+\,\eps\,(\eta_{\cT}(\ubx,\ue,\ubw))^{n+1/2}\\
&\quad
\,+\,\dfrac{\eps^2}{\Delta t}\,\tau_{\cT}(\bfx^n,\bfx^{n+1},\bfw^n,\bfw^{n+1})\,,
\end{align*}
where
\begin{align*} 
\kappa_{\cT}(\bfx, \bfv) &\,:=\,-\,
\dfrac{1}{b(\bfx)}\left(\cT(\bfx)(\bfv,\bfv,\bfv^{\perp})+\frac23\,\cT(\bfx)(\bfv^\perp,\bfv^\perp,\bfv^{\perp})\right)\,,
\end{align*}
and
\begin{align*}
\eta_{\cT}(\bfx,e,\bfv) &\,:=\, -\,\left(\dD_{\bfx}\left(\dfrac{\cT}{b}\right)(\bfx)(\bfv)\right)\,(\bfv,\bfv,\bfv^{\perp})
-\frac23\,\left(\dD_{\bfx}\left(\dfrac{\cT}{b}\right)(\bfx)(\bfv)\right)(\bfv^\perp,\bfv^\perp,\bfv^{\perp})\\
&\qquad
-\dfrac{1}{b(\bfx)}\,\left(\cT(\bfx)(\bfv,\bfv,\bE^{\perp}(\bfx))+2\cT(\bfx)(\bE(\bfx),\bfv,\bfv^{\perp})
+2\cT(\bfx)(\bfv^\perp,\bfv^\perp,\bE^{\perp}(\bfx))\right)\\
&\qquad
+\,\eps\,\left(e-\tfrac12\|\bfv\|^{2}\right)\,\left(\dD_{\bfx}\left(\dfrac{\cT}{b}\right)(\bfx)\left(\nabla_{\bfx}^\perp\left(\frac1b\right)(\bfx)\right)\right)(\bfv,\bfv,\bfv^{\perp})\\
&\qquad
+\,\frac23\,\eps\,\left(e-\tfrac12\|\bfv\|^{2}\right)\,\left(\dD_{\bfx}\left(\dfrac{\cT}{b}\right)(\bfx)\left(\nabla_{\bfx}^\perp\left(\frac1b\right)(\bfx)\right)\right)(\bfv^\perp,\bfv^\perp,\bfv^{\perp})\,,
\end{align*}
\begin{align*}
\tau_{\cT}&(\bfx,\bfx',\bfv,\bfv')\,:=\,
-\,\left(\dfrac{\cT}{b}(\bfx')-\dfrac{\cT}{b}(\bfx)
-\dD_{\bfx}\left(\dfrac{\cT}{b}\right)\left(\dfrac{\bfx'+\bfx}{2}\right)(\bfx'-\bfx)\right)\,\left(\dfrac{\bfv'+\bfv}{2},\dfrac{\bfv'+\bfv}{2},\left(\dfrac{\bfv'+\bfv}{2}\right)^\perp\right)\\
&\quad
-\,\frac23\left(\dfrac{\cT}{b}(\bfx')-\dfrac{\cT}{b}(\bfx)
-\dD_{\bfx}\left(\dfrac{\cT}{b}\right)\left(\dfrac{\bfx'+\bfx}{2}\right)(\bfx'-\bfx)\right)\,\left(\left(\dfrac{\bfv'+\bfv}{2}\right)^\perp,\left(\dfrac{\bfv'+\bfv}{2}\right)^\perp,\left(\dfrac{\bfv'+\bfv}{2}\right)^\perp\right)\\
&\quad
\,-\,
\left(\frac12\left(\dfrac{\cT}{b}(\bfx')+\dfrac{\cT}{b}(\bfx)\right)-\dfrac{\cT}{b}\left(\dfrac{\bfx'+\bfx}{2}\right)\right)
\left(\dfrac{\bfv'+\bfv}{2},\dfrac{\bfv'+\bfv}{2},\left(\bfv'\right)^\perp-\bfv^\perp\right)\\
&\quad
\,-\,
2\left(\frac12\left(\dfrac{\cT}{b}(\bfx')+\dfrac{\cT}{b}(\bfx)\right)-\dfrac{\cT}{b}\left(\dfrac{\bfx'+\bfx}{2}\right)\right)
\left(\bfv'-\bfv,\dfrac{\bfv'+\bfv}{2},\left(\dfrac{\bfv'+\bfv}{2}\right)^\perp\right)\\
&\quad
\,-\,
2\left(\frac12\left(\dfrac{\cT}{b}(\bfx')+\dfrac{\cT}{b}(\bfx)\right)-\dfrac{\cT}{b}\left(\dfrac{\bfx'+\bfx}{2}\right)\right)
\left(\left(\dfrac{\bfv'+\bfv}{2}\right)^\perp,\left(\dfrac{\bfv'+\bfv}{2}\right)^\perp,\left(\bfv'\right)^\perp-\bfv^\perp\right)\\
&\quad
\,-\,
\frac18\left(\dfrac{\cT}{b}(\bfx')+\dfrac{\cT}{b}(\bfx)\right)
\left(\bfv'-\bfv,\bfv'-\bfv,\left(\bfv'\right)^\perp-\bfv^\perp\right)\\
&\quad
\,-\,
\frac{1}{12}\left(\dfrac{\cT}{b}(\bfx')+\dfrac{\cT}{b}(\bfx)\right)
\left(\left(\bfv'\right)^\perp-\bfv^\perp,\left(\bfv'\right)^\perp-\bfv^\perp,\left(\bfv'\right)^\perp-\bfv^\perp\right)\\
&\quad
\,-\,\frac14\,
\left(\dfrac{\cT}{b}(\bfx')-\dfrac{\cT}{b}(\bfx)\right)\left(\bfv'-\bfv,\bfv'-\bfv,\left(\dfrac{\bfv'+\bfv}{2}\right)^\perp\right)\\
&\quad
\,-\,\frac12\,
\left(\dfrac{\cT}{b}(\bfx')-\dfrac{\cT}{b}(\bfx)\right)\left(\dfrac{\bfv'+\bfv}{2},\bfv'-\bfv,(\bfv')^\perp-\bfv^\perp\right)\\
&\quad
\,-\,\frac12\,
\left(\dfrac{\cT}{b}(\bfx')-\dfrac{\cT}{b}(\bfx)\right)\left(\left(\dfrac{\bfv'+\bfv}{2}\right)^\perp,(\bfv')^\perp-\bfv^\perp,(\bfv')^\perp-\bfv^\perp\right)\,.
\end{align*}
\end{lemma}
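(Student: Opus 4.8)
The plan is to transcribe, step by step, the proof of the continuous trilinear elimination Lemma~\ref{le:cont-3rd} into the discrete setting: the time derivative $\tfrac{\dD}{\dD t}$ is replaced by the centred difference $a\mapsto(a^{n+1}-a^n)/\Delta t$, pointwise evaluation by evaluation at the midpoint $\ubx^{n+1/2}$, the Leibniz rule by the discrete product and averaging rules recalled just before Lemma~\ref{le:disc-1st}, and the identity \eqref{eq:velocity} by its discrete counterpart \eqref{scheme:velocity}. Exactly as for Lemmas~\ref{le:disc-1st} and~\ref{le:disc-2nd}, the statement is an exact algebraic identity, so the proof is a lengthy but routine verification; the smallness of $\kappa_\cT$, $\eta_\cT$ and $\tau_\cT$, which rests on Proposition~\ref{p:unif}, is not used here, and $\tau_\cT$ is simply the bookkeeping of the defects between the exact discrete calculus rules and their continuous analogues.

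Concretely, I would first use the symmetry of $\cT$ together with \eqref{scheme:velocity} to rewrite one of the three arguments of $(\cT_{\ubx}(\ubw,\ubw,\ubw))^{n+1/2}$: this produces the term corresponding to the $-\eps\,(\bE^\perp/b)_{\ubx}^{n+1/2}$ summand of \eqref{scheme:velocity}, which feeds the $\bE^\perp$ part of $\eta_\cT$, together with $\tfrac{\eps^2}{\Delta t}$ times $\left(\tfrac{\cT}{b}\right)_{\ubx}^{n+1/2}$ applied to $\big(\ubw^{n+1/2},\ubw^{n+1/2},(\bfw^{n+1})^\perp-(\bfw^n)^\perp\big)$. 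I would then expand the telescoping increment $(\kappa_\cT(\bfx,\bfw))^{n+1}-(\kappa_\cT(\bfx,\bfw))^{n}$, with $\kappa_\cT$ as stated, by applying the discrete product/averaging rules to the four sequences $\tfrac{\cT}{b}(\bfx^{\bullet})$ and the three copies of $\bfw^{\bullet}$ (or of $(\bfw^{\bullet})^\perp$). Among the terms so produced, the one in which the difference falls on the $\perp$-ed copy of $\bfw$ inside the summand $-\tfrac1b\,\cT(\bfv,\bfv,\bfv^\perp)$ of $\kappa_\cT$ reproduces, modulo remainders that will be routed to $\tau_\cT$, the $\tfrac{\eps^2}{\Delta t}$-term above; in every other term I substitute the $\bfw$-equation of \eqref{scheme:CN}, namely $\bfw^{n+1}-\bfw^n=\tfrac{\Delta t}{\eps}\bE_{\ubx}^{n+1/2}-\tfrac{\Delta t}{\eps^2}b_{\ubx}^{n+1/2}(\ubw^{n+1/2})^\perp$, whenever the difference falls on a copy of $\bfw$, and, after a midpoint Taylor expansion, the $\bfx$-equation of \eqref{scheme:CN}, namely $\bfx^{n+1}-\bfx^n=\tfrac{\Delta t}{\eps}\ubw^{n+1/2}-\Delta t\,(\ue^{n+1/2}-\tfrac12\|\ubw^{n+1/2}\|^2)\,\nabla_\bfx^\perp(\tfrac1b)(\ubx^{n+1/2})$, whenever it falls on $\cT(\bfx^{\bullet})$. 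The $\eps^{-2}$-singular contributions (those carrying a factor $b_{\ubx}^{n+1/2}(\ubw^{n+1/2})^\perp$) then cancel among themselves: this is exactly the algebraic identity underlying Lemma~\ref{le:cont-3rd}, which is why the coefficients $1$ and $\tfrac23$ appear in $\kappa_\cT$ and why $\cT$ must be symmetric, and the net effect is to reconstruct $(\cT_{\ubx}(\ubw,\ubw,\ubw))^{n+1/2}$. The contributions that carry a single power of $\eps$ reproduce, after multiplication by $\eps^2$, the terms of $\eps\,\eta_\cT$; in particular the summands of $\eta_\cT$ proportional to $\ue^{n+1/2}-\tfrac12\|\ubw^{n+1/2}\|^2$ are inherited from the ghost term of the $\bfx$-equation, which is absent at the continuous level and therefore has no analogue in $\eta_\bT$. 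Everything that remains is, by construction, $\tfrac{\eps^2}{\Delta t}\,\tau_\cT$.

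The last step is to match each summand of the stated $\tau_\cT$ with its origin: the pieces built from $\tfrac12\big(\tfrac{\cT}{b}(\bfx^{n+1})+\tfrac{\cT}{b}(\bfx^n)\big)-\tfrac{\cT}{b}(\ubx^{n+1/2})$ arise when replacing an endpoint average of $\tfrac{\cT}{b}$ by its midpoint value (such defects also enter through the fact that $\underline{(1/b)_\bfx}^{\,n+1/2}\,b_{\ubx}^{n+1/2}\neq1$); the piece built from $\tfrac{\cT}{b}(\bfx^{n+1})-\tfrac{\cT}{b}(\bfx^n)-\dD_\bfx(\tfrac{\cT}{b})(\ubx^{n+1/2})(\bfx^{n+1}-\bfx^n)$ is the midpoint-rule remainder in the slow increment of $\tfrac{\cT}{b}$; and the pieces built from $\tfrac{\cT}{b}(\bfx^{n+1})\pm\tfrac{\cT}{b}(\bfx^n)$ applied to arguments quadratic or cubic in $\bfw^{n+1}-\bfw^n$ come from iterating the quadratic correction $\tfrac14(a^{n+1}-a^n)(b^{n+1}-b^n)$ of the discrete product/averaging rules across the three vector slots. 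I expect the main obstacle to be purely organisational: the multilinear expansion of $(\kappa_\cT)^{n+1}-(\kappa_\cT)^{n}$ generates a large number of terms, and one must keep the bookkeeping tight enough both to witness the exact cancellation of the $\eps^{-2}$-singular part and to assign every remaining term unambiguously to $\eta_\cT$ or to $\tau_\cT$; no ingredient beyond those already used in Lemmas~\ref{le:cont-3rd}, \ref{le:disc-1st} and~\ref{le:disc-2nd} is needed.
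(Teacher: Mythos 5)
Your proposal is correct and matches the paper's approach: the paper itself declares the verification ``a straightforward --- but lengthy! --- computation'' and omits it, giving only the trilinear discrete product and averaging identities, which are precisely the tools you invoke. Your road-map (eliminate $\ubw^{n+1/2}$ via \eqref{scheme:velocity}, expand $\kappa_\cT^{n+1}-\kappa_\cT^n$ by those rules, substitute the $\bfw$- and $\bfx$-equations of \eqref{scheme:CN} where differences land, observe the exact cancellation of the $\eps^{-2}$-singular part enforced by the symmetry of $\cT$ and the $\tfrac23$ coefficient in $\kappa_\cT$, and route the $\eps(e-\tfrac12\|\bfv\|^2)$ contributions inherited from the ghost term to $\eta_\cT$ and the discrete chain-rule defects to $\tau_\cT$) is exactly that computation.
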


Again the proof of Lemma~\ref{le:disc-3rd} is a straightforward --- but lengthy ! --- computation, that we mostly omit. To provide some elements of it, let us only stress that if $\bT_0$ is a fixed symmetric trilinear map then
\begin{align*}
\left(\bT_0(a,a,b)\right)^{n+1}
&-\left(\bT_0(a,a,b)\right)^{n}\\
&\quad=\,\bT_0(\ua^{n+1/2},\ua^{n+1/2},b^{n+1}-b^n)
+\frac14\,\bT_0(a^{n+1}-a^n,a^{n+1}-a^n,b^{n+1}-b^n)\\
&\qquad+2\,\bT_0(a^{n+1}-a^n,\ua^{n+1/2},\ub^{n+1/2})
\end{align*}
and
\begin{align*}
\underline{\bT_0(a,a,b)}^{n+1/2}
&=\bT_0(\ua^{n+1/2},\ua^{n+1/2},\ub^{n+1/2})
+\frac14\bT_0(a^{n+1}-a^n,a^{n+1}-a^n,\ub^{n+1/2})\\
&\quad+\frac12\bT_0(\ua^{n+1/2},a^{n+1}-a^n,b^{n+1}-b^n)
\end{align*}
thus in particular
\begin{align*}
\left(\bT_0(a,a,a)\right)^{n+1}
&-\left(\bT_0(a,a,a)\right)^{n}\\
&\,=\,3\,\bT_0(\ua^{n+1/2},\ua^{n+1/2},a^{n+1}-a^n)
+\frac14\,\bT_0(a^{n+1}-a^n,a^{n+1}-a^n,a^{n+1}-a^n)
\end{align*}
and
\begin{align*}
\underline{\bT_0(a,a,a)}^{n+1/2}
&=\bT_0(\ua^{n+1/2},\ua^{n+1/2},\ua^{n+1/2})
+\frac34\bT_0(\ua^{n+1/2},a^{n+1}-a^n,a^{n+1}-a^n)\,.
\end{align*}

\begin{lemma} \label{le:disc-1st-bis}
Consider $\cL$ a differentiable function of space-guiding center-kinetic energy with values in linear maps from $\R^2$ to some $\R^n$, $n\in\N^*$. Then solutions to \eqref{scheme:CN} satisfy
\begin{align*}
(\cL_{\ubx,\uGC,\ube}(\ubw))^{n+1/2} 
&\, = \,
-\eps^{2}\dfrac{(\kappa_{\cL}(\bfx,\GC,e,\bfw))^{n+1}-(\kappa_{\cL}(\bfx,\GC,e,\bfw))^{n}}{\Delta t}
\,+\,\eps\,(\eta_{\cL}(\ubx,\uGC,\ue,\ubw))^{n+1/2}\\
&\quad
\,+\,\dfrac{\eps^2}{\Delta t}\,\tau_{\cL}(\bfx^n,\bfx^{n+1},\GC^n,\GC^{n+1},e^n,e^{n+1},\bfw^n,\bfw^{n+1})\,,
\end{align*}
where
\begin{align*} 
\kappa_{\cL}(\bfx,\bfy,e,\bfv) & \,:=\,-\dfrac{1}{b(\bfx)}\cL(\bfx,\bfy,e)(\bfv^{\perp})\,,
\end{align*}
and
\begin{align*}
\eta_{\cL}&(\bfx,\bfy,e,\bfv)\\
&\,:=\,-\,\left(\dD_{\bfx}\left(\dfrac{\cL}{b}\right)(\bfx,\bfy,e)(\bfv)\right)\,(\bfv^{\perp})
-\dfrac{1}{b(\bfx)}\,\cL(\bfx,\bfy,e)(\bE^{\perp}(\bfx))\\
&\quad
\,+\,\eps\,\left(e-\tfrac12\|\bfv\|^{2}\right)\,\left(\dD_{\bfx}\left(\dfrac{\cL}{b}\right)(\bfx,\bfy,e)\left(\nabla_{\bfx}^\perp\left(\frac1b\right)(\bfx)\right)\right)(\bfv^{\perp})\\
&\quad-\frac{1}{b(\bfx)}\,\left(\dD_e \cL(\bfx,\bfy,e)(\bE(\bfx)\cdot\bfv)\right)(\bfv^{\perp})\\
&\quad+\frac{\eps}{b(\bfx)}\,\left(e-\tfrac12\|\bfv\|^{2}\right)
\,\left(\dD_e \cL(\bfx,\bfy,e)\left(\bE(\bfx)\cdot\nabla_{\bfx}^\perp\left(\frac1b\right)(\bfx)\right)\right)(\bfv^{\perp})\\
&\quad
\,-\,\eps\,\left(\dD_{\bfy}\left(\dfrac{\cL}{b}\right)(\bfx,\bfy,e)\left(
\,-\,\left(e-\tfrac12\|\bfv\|^2\right)\,\nabla_{\bfx}^{\perp}\left(\dfrac1b\right)(\bfx)
+\eta_{\cL_1}(\bfx,e,\bfv)\right)
\right)(\bfv^{\perp})\,,
\end{align*}
\begin{align*}
\tau_{\cL}&(\bfx,\bfx',\bfy,\bfy',e,e',\bfv,\bfv')\\
&:=\,
-\,\Bigg(\dfrac{\cL}{b}(\bfx',\bfy',e')-\dfrac{\cL}{b}(\bfx,\bfy,e)\\
&\qquad\qquad\qquad
-\dD_{(\bfx,\bfy,e)}\left(\dfrac{\cL}{b}\right)\left(\dfrac{\bfx'+\bfx}{2},\dfrac{\bfy'+\bfy}{2},\dfrac{e'+e}{2}\right)((\bfx'-\bfx,\bfy'-\bfy,e'-e))\Bigg)\,\left(\left(\dfrac{\bfv'+\bfv}{2}\right)^\perp\right)\\
&\ 
\,-\,
\left(\frac12\left(\dfrac{\cL}{b}(\bfx',\bfy',e')+\dfrac{\cL}{b}(\bfx,\bfy,e)\right)-\dfrac{\cL}{b}\left(\dfrac{\bfx'+\bfx}{2},\dfrac{\bfy'+\bfy}{2},\dfrac{e'+e}{2}\right)\right)
\left((\bfv')^\perp-\bfv^\perp\right)\\
&\ 
\,+\,\left(\dD_e\left(\dfrac{\cL}{b}\right)\left(\dfrac{\bfx'+\bfx}{2},\dfrac{\bfy'+\bfy}{2},\dfrac{e'+e}{2}\right)\left(\phi(\bfx')-\phi(\bfx)-\dD_{\bfx}\phi\left(\dfrac{\bfx'+\bfx}{2}\right)(\bfx'-\bfx)\right)
\right)\left(\left(\dfrac{\bfv'+\bfv}{2}\right)^\perp\right)\\
&\ 
\,-\,\left(\dD_{\bfy}\left(\dfrac{\cL}{b}\right)\left(\dfrac{\bfx'+\bfx}{2},\dfrac{\bfy'+\bfy}{2},\dfrac{e'+e}{2}\right)\left(
\tau_{\cL_1}(\bfx,\bfx',\bfv,\bfv')\right)
\right)\left(\left(\dfrac{\bfv'+\bfv}{2}\right)^\perp\right)\,.
\end{align*}
\end{lemma}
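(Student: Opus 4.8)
The plan is to reproduce at the discrete level the elementary continuous mechanism --- ``insert the velocity identity, then apply the Leibniz rule'' --- that yielded Lemmas~\ref{le:cont-1st}--\ref{le:cont-3rd}; concretely the computation will run exactly as for Lemmas~\ref{le:disc-1st}, \ref{le:disc-2nd} and~\ref{le:disc-3rd}, this lemma being their natural variant in which $\cL$ depends on the additional slow arguments $\GC$ and $e$. No estimate is involved: the asserted identity is an exact algebraic rearrangement of \eqref{scheme:CN}. The only structural novelty is that the discrete increments of $\GC$ and of $e$ are not read off \eqref{scheme:CN} directly, so I would begin by recording them: $e^{n+1}-e^n=\phi(\bfx^n)-\phi(\bfx^{n+1})$ from the second line of \eqref{scheme:CN}, and the increment of $\GC$ from the guiding-center identity displayed just above Lemma~\ref{le:disc-3rd}, which is itself the $\cL_1\equiv\Id$ instance of Lemma~\ref{le:disc-1st} and, crucially, already carries a $\tau_{\cL_1}$ contribution.

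First I would abbreviate $G:=\cL/b$ (legitimate since $b$ depends on $\bfx$ only) and insert the discrete velocity identity \eqref{scheme:velocity} into the left-hand side. This produces the term $-\eps\,(G_{\ubx,\uGC,\ue})^{n+1/2}(\bE^\perp(\ubx^{n+1/2}))$, which is already $\eps$ times the $\bE^\perp$-contribution to $\eta_\cL$, together with the main term $\tfrac{\eps^2}{\Delta t}\,(G_{\ubx,\uGC,\ue})^{n+1/2}((\bfw^{n+1})^\perp-(\bfw^n)^\perp)$. To the latter I would apply the discrete product and averaging rules recalled just before Lemma~\ref{le:disc-1st}, with $a^\ell=G(\bfx^\ell,\GC^\ell,e^\ell)$ and $b^\ell=(\bfw^\ell)^\perp$, using $\underline{\bfw^\perp}^{n+1/2}=(\ubw^{n+1/2})^\perp$. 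This splits the main term into three pieces: the exact discrete time-difference of $G(\bfx,\GC,e)(\bfw^\perp)=-\kappa_\cL(\bfx,\GC,e,\bfw)$, which is the $-\eps^2(\kappa_\cL^{n+1}-\kappa_\cL^n)/\Delta t$ term; the gap $(G_{\ubx,\uGC,\ue})^{n+1/2}-\underline{G(\bfx,\GC,e)}^{n+1/2}$ between half-point evaluation and endpoint average, acting on $(\bfw^{n+1})^\perp-(\bfw^n)^\perp$, which is $\tfrac{\eps^2}{\Delta t}$ times the second summand of $\tau_\cL$; and a cross term $-\tfrac{\eps^2}{\Delta t}\,(G(\bfx^{n+1},\GC^{n+1},e^{n+1})-G(\bfx^n,\GC^n,e^n))((\ubw^{n+1/2})^\perp)$.

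Expanding the cross term is the core of the work. I would write $G(\bfx^{n+1},\GC^{n+1},e^{n+1})-G(\bfx^n,\GC^n,e^n)$ as its midpoint first-order term $\dD_{(\bfx,\bfy,e)}G(\,\cdot\,)(\bfx^{n+1}-\bfx^n,\GC^{n+1}-\GC^n,e^{n+1}-e^n)$ plus a second-order remainder, the remainder producing the first summand of $\tau_\cL$. In the first-order term I would substitute the three increments. For $\bfx^{n+1}-\bfx^n$ I use the first line of \eqref{scheme:CN}: after the $\tfrac{\eps^2}{\Delta t}$ prefactor, its $\tfrac{\Delta t}{\eps}\,\ubw^{n+1/2}$ part gives $\eps$ times the $\dD_\bfx(\cL/b)(\bfv)(\bfv^\perp)$ term of $\eta_\cL$, and its $\Delta t$-sized ghost part gives $\eps$ times the $\eps\,(e-\tfrac12\|\bfv\|^2)\,\dD_\bfx(\cL/b)$ term of $\eta_\cL$. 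For $e^{n+1}-e^n=\phi(\bfx^n)-\phi(\bfx^{n+1})$ I would further Taylor-expand $\phi$ about $\ubx^{n+1/2}$: the remainder gives the third summand of $\tau_\cL$, and the first-order part --- once $\bfx^{n+1}-\bfx^n$ is expanded again and $\nabla_\bfx\phi=-\bE$ is used --- gives $\eps$ times the two $\dD_e\cL$ terms of $\eta_\cL$. For $\GC^{n+1}-\GC^n$ I substitute the guiding-center identity: its $\Delta t$-sized part gives $\eps$ times the remaining $\dD_\bfy(\cL/b)$ term of $\eta_\cL$, and its $\tau_{\cL_1}$ part gives the fourth summand of $\tau_\cL$. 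Gathering all $\tau$-type contributions into $\tau_\cL$ and all $\eps$-prefactored contributions into $\eta_\cL$ then yields the claimed formula.

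I do not expect any conceptual obstacle; the hard part will be purely the bookkeeping. The two points requiring care are: tracking the powers of $\eps$ --- the $\tfrac{\Delta t}{\eps}$ scaling of the velocity part of $\bfx^{n+1}-\bfx^n$ is precisely what converts the $\tfrac{\eps^2}{\Delta t}$ prefactor into the single overall $\eps$ in front of $\eta_\cL$, whereas the $\Delta t$-sized ghost and $\eta_{\cL_1}$ contributions, as well as the $\tau_{\cL_1}$ term carried by the $\GC$-identity, retain one further power of $\eps$ --- and checking that each of the three second-order remainders (of $G$ in its slow arguments, of $\phi$, and the one already inside the $\GC$-identity) lands in the intended summand of $\tau_\cL$. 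This is exactly the ``straightforward but lengthy'' type of computation the paper alludes to for Lemmas~\ref{le:disc-2nd} and~\ref{le:disc-3rd}, and I would carry it out in the same manner.
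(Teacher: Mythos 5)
Your proposal is correct and matches the argument the paper has in mind: the proof is indeed the same substitution of the discrete velocity identity \eqref{scheme:velocity} followed by the discrete Leibniz and averaging rules, with the cross term $-\tfrac{\eps^2}{\Delta t}\bigl(G(\bfx^{n+1},\GC^{n+1},e^{n+1})-G(\bfx^n,\GC^n,e^n)\bigr)((\ubw^{n+1/2})^\perp)$ expanded through a midpoint Taylor formula in the slow arguments, and the increments $\bfx^{n+1}-\bfx^n$, $e^{n+1}-e^n$, $\GC^{n+1}-\GC^n$ then substituted from \eqref{scheme:CN}, from the $e$-equation, and from the guiding-center identity displayed above Lemma~\ref{le:disc-3rd}. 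This is exactly the ``straightforward variation on the proof of Lemma~\ref{le:disc-1st}'' that the paper asserts, and your decomposition of which remainder lands in which summand of $\tau_{\cL}$ is accurate.

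One small bookkeeping point that carrying out your plan to the end would surface: the guiding-center identity carries the $\tau_{\cL_1}$ contribution with prefactor $\eps/\Delta t$, so $\GC^{n+1}-\GC^n$ contains $\eps\,\tau_{\cL_1}$; after the overall $-\tfrac{\eps^2}{\Delta t}\dD_{\bfy}G(\,\cdot\,)(\cdots)((\ubw^{n+1/2})^\perp)$ prefactor this term is $\cO(\eps^3/\Delta t)$, so to present it as $\tfrac{\eps^2}{\Delta t}$ times a summand of $\tau_{\cL}$ the fourth summand should carry an explicit factor $\eps$ in front of $\dD_{\bfy}(\cL/b)$. The displayed $\tau_{\cL}$ in the lemma omits it; this looks like a harmless typo in the paper (the extra $\eps$ only improves the subsequent estimates), not a gap in your reasoning, but it is worth flagging when you write out the full computation.
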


The proof of Lemma~\ref{le:disc-1st-bis} is a straightforward variation on the proof of Lemma~\ref{le:disc-1st}.

Then we apply Lemma~\ref{le:disc-3rd} to $\cT_1$ and Lemma~\ref{le:disc-1st-bis} to $\cL_2$ with  $\cT_1$ and $\cL_2$ defined as in Subsection~\ref{s:cont-gc} so as to modify \eqref{eq:disc-x2}. With the consistency analysis in mind, we want to express the outcome, at leading order, as a scheme for \eqref{eq:cont-gc}. To do so we need to compare \eqref{eq:disc-x2} with \eqref{eq:cont-x2} and keep track of differences between Lemmas~\ref{le:disc-3rd} and~\ref{le:disc-1st-bis} and Lemmas~\ref{le:cont-3rd} and~\ref{le:cont-1st}.

To begin with note that with $\kappa_1$ and $\eta_1$ as in \eqref{eq:cont-x2}, equation \eqref{eq:disc-x2} takes the form 
\begin{align*}
&\dfrac{(\bfx+\eps\,\kappa_1(\bfx,\bfw))^{n+1}-(\bfx+\eps\,\kappa_1(\bfx,\bfw))^{n}}{\Delta t}\\
&\qquad
\,=\, \left(\bF_{\ubx}\,-\,\ue \left(\nabla_{\bfx}^{\perp}\left(\dfrac1b\right)\right)_{\ubx}
+\eps\,\eta_1(\ubx,\ubw)\right)^{n+1/2}
+\,\eps\,\left(\tau_1^n+\ttau_1^n\right)\,,
\end{align*}
with $\tau_1^n$ as in \eqref{eq:disc-x2} and
\[
\ttau_1^n
\,:=\,\frac12\,\left(\left(\ue-\tfrac12\|\ubw\|^{2}\right)\,\left(\dD_{\bfx}\left(\dfrac{\cB_1}{b}\right)(\ubx)\left(\nabla_{\bfx}^\perp\left(\frac1b\right)(\ubx)\right)\right)(\ubw,\ubw^{\perp})\right)^{n+1/2}\,.
\]

Then with $\kappa_2$ and $\eta_3$ as in \eqref{eq:cont-gc}, equation \eqref{eq:disc-x2} is transformed into
\begin{align}\label{eq:disc-gc}
&\dfrac{(\GC+\eps^2\,\kappa_2(\bfx,\GC,e,\bfw))^{n+1}-(\GC+\eps^2\,\kappa_2(\bfx,\GC,e,\bfw))^{n}}{\Delta t}
\\\nonumber
&
\ =\left(\bF_{\uGC}\,-\,\ueGC \left(\nabla_{\bfx}^{\perp}\left(\dfrac1b\right)\right)_{\uGC}
+\eps^2\,\eta_2(\ubx,\GC,e,\ubw)\right)^{n+1/2}
+\,\eps\,\left(\tau_1^n+\ttau_1^n\right)
+\,\eps^2\,\left(\tau_2^n+\ttau_2^n\right)\,,
\nonumber
\end{align}
with
\[
\tau_2^n\,:=\,
\dfrac{\eps}{\Delta t}\,\tau_{\cL_2}(\bfx^n,\bfx^{n+1},\GC^n,\GC^{n+1},e^n,e^{n+1},\bfw^n,\bfw^{n+1})
\,+\,\dfrac{\eps}{\Delta t}\,\tau_{\cT_1}(\bfx^n,\bfx^{n+1},\bfw^n,\bfw^{n+1})
\]
and
\begin{align*}
\ttau_2^n
&\,:=\,\left(
\left(\ube-\tfrac12\|\ubw\|^{2}\right)\,\left(\dD_{\bfx}\left(\dfrac{\cT_1}{b}\right)(\ubx)\left(\nabla_{\bfx}^\perp\left(\frac1b\right)(\ubx)\right)\right)(\ubw,\ubw,\ubw^{\perp})
\right)^{n+1/2}\\
&\quad
\,+\,\frac23
\left(
\left(\ube-\tfrac12\|\ubw\|^{2}\right)\,\left(\dD_{\bfx}\left(\dfrac{\cT_1}{b}\right)(\bfx)\left(\nabla_{\bfx}^\perp\left(\frac1b\right)(\ubx)\right)\right)(\ubw^\perp,\ubw^\perp,\ubw^{\perp})
\right)^{n+1/2}\\
&\quad
\,+\,
\left(
\left(\ube-\tfrac12\|\ubw\|^{2}\right)\,\left(
\left(\dD_{\bfx}\left(\dfrac{\cL_2}{b}\right)+
\dD_{\bfy}\left(\dfrac{\cL_2}{b}\right)
\right)(\ubx,\uGC,\ue)\left(\nabla_{\bfx}^\perp\left(\frac1b\right)(\ubx)\right)\right)(\ubw^{\perp})
\right)^{n+1/2}\\
&\quad
\,+\,
\left(
\left(\ube-\tfrac12\|\ubw\|^{2}\right)\,\frac{1}{b(\ubx)}
\left(\dD_{e}\cL_2(\ubx,\uGC,\ue)\left(\bE(\ubx)\cdot\nabla_{\bfx}^\perp\left(\frac1b\right)(\ubx)
\right)\right)(\ubw^{\perp})
\right)^{n+1/2}
\,.
\end{align*}

Consistently, to any sequence $(\bfx,e,\bfw)$, (not necessarily solving \eqref{scheme:CN}) we associate $\GC$, $\eGC$ through 
\begin{align*}
\GC&:=\bfx-\,\eps\,\dfrac{\bfw^{\perp}}{b(\bfx)}\,,&
\eGC&:=e\,-\,\dfrac{\eps}{b(\bfx)}\,\bE^\perp(\bfx)\cdot\bfw\,,
\end{align*}
and set 
\begin{align*}
\Res_x^n(\bfx,e,\bfw)&:=
\dfrac{(\GC+\eps^2\,\kappa_2(\bfx,\GC,e,\bfw))^{n+1}-(\GC+\eps\,\kappa_2(\bfx,\GC,e,\bfw))^{n}}{\Delta t}\\
&\qquad\qquad\,-\,\left(\bF_{\uGC}\,-\,\ueGC \left(\nabla_{\bfx}^{\perp}\left(\dfrac1b\right)\right)_{\uGC}
+\eps^2\,\eta_2(\ubx,\GC,e,\ubw)\right)^{n+1/2}
\end{align*}
\begin{align*}
\Res_e^n(\bfx,e,\bfw)&:=
\dfrac{(\eGC+\phi(\GC)+\eps^2\,\kappa_e(\bfx,\bfw))^{n+1}-(\eGC+\phi(\GC)+\eps^2\,\kappa_e(\bfx,\bfw))^{n}}{\Delta t}
\end{align*}
and
\begin{align*}
\Res_w^n(\bfx,e,\bfw)&:=
\dfrac{\bfw^{n+1}-\bfw^{n}}{\Delta t}
\,-\,\left(\eps^{-1}\bE_\ubx\,-\,\eps^{-2}\,b_\ubx\,\ubw^{\perp}
\right)^{n+1/2}\,.
\end{align*}

\begin{proposition}\label{p:stab}
Let $T>0$, $M'>0$. There exist $\eps_0>0$, $\delta_0>0$ and $C>0$ such that when $0<\eps\leq\eps_0$ and $0<\Delta t\leq \delta_0$ if $(\bfx,e,\bfw)$ and $(\tbx,\te,\tbw)$ are such that when $0\leq n\,\Delta t\leq T$,
\begin{align*}
\|\bfw^n\|&\leq M'\,,& |e^n|&\leq \tfrac12(M')^2\,,&
\|\tbw^n\|&\leq M'\,,& |\te^n|&\leq \tfrac12(M')^2\,,&
\end{align*}
then when $0\leq n\,\Delta t\leq T$,
\begin{align*}
\|(\GC^n,\eGC^n)&-(\tGC^n,\teGC^n)\|
\,+\,\eps\,\|(\bfx^n,e^n)-(\tbx^n,\te^n)\|
\,+\,\eps^2\,\|\bfw^n-\tbw^n\|\\
&\,\leq\,C\,\left(
\|(\GC^0,\eGC^0)-(\tGC^0,\teGC^0)\|
\,+\,\eps^2\,\|\bfw^0-\tbw^0\|
\right)\\
&\qquad\,+\,C\,
\max_{0\leq k\leq n}\left(\|\Res_x^k(\bfx,e,\bfw)\|
+\|\Res_x^k(\tbx,\te,\tbw)\|
+|\Res_e^k(\bfx,e,\bfw)|
+|\Res_e^k(\tbx,\te,\tbw)|\right)\\
&\qquad\,+\,C\,\eps^2\,\max_{0\leq k\leq n}\left(\|\Res_w^k(\bfx,e,\bfw)\|
+\eps^2\,\|\Res_w^k(\tbx,\te,\tbw)\|
\right)
\,.
\end{align*}
\end{proposition}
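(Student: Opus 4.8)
The plan is a coupled discrete Grönwall estimate run in the well-prepared variables. For each of the quantities $\bfx,e,\bfw,\GC,\eGC$ let $\delta(\cdot)^n$ denote its difference between the two sequences, set
\[
\Phi^n:=\GC^n+\eps^2\,\kappa_2(\bfx^n,\GC^n,e^n,\bfw^n)\,,\qquad
\Psi^n:=\eGC^n+\phi(\GC^n)+\eps^2\,\kappa_e(\bfx^n,\bfw^n)\,,
\]
and let $\Theta^n:=\|\delta\Phi^n\|+\|\delta\Psi^n\|+\eps^2\,\|\delta\bfw^n\|$. The first step is change-of-variables bookkeeping: from the assumed uniform bounds, $\kappa_2$ and $\kappa_e$ are bounded with bounded derivatives on the relevant region, and $\bfx\mapsto\bfx-\eps\,\bfw^\perp/b(\bfx)$ is, at fixed $\bfw$, a near-identity diffeomorphism for $\eps$ small; solving for $\bfx$ in terms of $(\GC,\bfw)$ and unwinding the definitions gives, for $\eps_0$ small, $\eps$-independent constants such that
\[
\|\delta\GC^n\|+\|\delta\eGC^n\|+\eps\,\|\delta\bfx^n\|+\eps\,\|\delta e^n\|\ \lesssim\ \Theta^n\ \lesssim\ \|\delta\GC^n\|+\|\delta\eGC^n\|+\eps^2\,\|\delta\bfw^n\|\,,\qquad\|\delta\bfx^n\|\lesssim\|\delta\GC^n\|+\eps\,\|\delta\bfw^n\|\,.
\]
Hence it suffices to bound $\Theta^n$, and $\Theta^0$ already matches the initial term in the statement.

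Next I would subtract the evolution equations. Subtracting \eqref{eq:disc-gc} for the two sequences and reading the excess terms as $\Res_x^n$, then using that $(\GC,\eGC)\mapsto\bF(\GC)-\eGC\,\nabla_\bfx^\perp(1/b)(\GC)$ is Lipschitz on the uniform-bound region, that $\eps^2\eta_2$ is $\eps^2$ times a bounded function that is Lipschitz in $(\bfx,\GC,e,\bfw)$, and the averaging identity $\|\ua^{n+1/2}-\widetilde{\ua}^{\,n+1/2}\|\le\tfrac12(\|\delta a^{n+1}\|+\|\delta a^n\|)$, yields
\[
\|\delta\Phi^{n+1}\|\le\|\delta\Phi^n\|+C\,\Delta t\,(\Theta^{n+1}+\Theta^n)+C\,\Delta t\,\big(\|\Res_x^n(\bfx,e,\bfw)\|+\|\Res_x^n(\tbx,\te,\tbw)\|\big)\,.
\]
Since the $\Psi$-line \eqref{eq:disc-e} is an exact conservation for true solutions, the increment of $\delta\Psi$ is purely residual, so $\|\delta\Psi^{n+1}\|\le\|\delta\Psi^n\|+\Delta t\,\big(|\Res_e^n(\bfx,e,\bfw)|+|\Res_e^n(\tbx,\te,\tbw)|\big)$.

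The heart of the proof — and the step I expect to be the main obstacle — is the fast variable. Writing the $\bfw$-line of \eqref{scheme:CN} with its residual as
\[
\big(\Id+\tfrac12\lambda\,b_{\ubx}^{n+1/2}\bJ\big)\bfw^{n+1}=\big(\Id-\tfrac12\lambda\,b_{\ubx}^{n+1/2}\bJ\big)\bfw^{n}+\tfrac{\Delta t}{\eps}\,\bE_{\ubx}^{n+1/2}+\Delta t\,\Res_w^n(\bfx,e,\bfw)\,,\qquad\lambda:=\tfrac{\Delta t}{\eps^2}\,,
\]
and similarly for $(\tbx,\te,\tbw)$, I would subtract and use Lemma~\ref{le:stab}(3) to move the $b$-mismatch $b_{\ubx}^{n+1/2}-b_{\widetilde{\ubx}}^{n+1/2}$ to the right-hand side; then Lemma~\ref{le:stab}(1)--(2) — the map $(\Id+\tfrac12\lambda b\bJ)^{-1}(\Id-\tfrac12\lambda b\bJ)$ is unitary and $\|(\Id+\tfrac12\lambda b_{\ubx}^{n+1/2}\bJ)^{-1}\|\le\min(1,2/(\lambda b_0))$ — shows, after multiplying by $\eps^2$, that the rotation part propagates $\eps^2\|\delta\bfw^n\|$ with no growth, the $\delta\bE$-forcing contributes $\lesssim\Delta t(\Theta^{n+1}+\Theta^n)$ (the $\eps^2/\Delta t$ damping of the inverse eating the $\eps^{-1}$ in front of $\bE$, after $\|\delta\bE\|\lesssim\|\delta\bfx\|\lesssim\|\delta\GC\|+\eps\|\delta\bfw\|$), and the $\Res_w$-forcing contributes $\lesssim\min(\eps^2\Delta t,\eps^4)\,\|\delta\Res_w^n\|$. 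The dangerous term is the stiff frequency mismatch $-\lambda\,(b_{\ubx}^{n+1/2}-b_{\widetilde{\ubx}}^{n+1/2})\,\bJ\,\widetilde{\ubw}^{\,n+1/2}$, which carries the $\eps^{-2}$ slow-to-fast Lipschitz constant, and a naive estimate of it would leave a growth $\eD^{C/\eps}$ or worse. The device is to eliminate the stiff factor $\lambda\,\widetilde{\ubw}^{\,n+1/2}$ using the tilde sequence's $\bfw$-equation, expressing it through $\tfrac{\Delta t}{\eps}\bE_{\widetilde{\ubx}}^{n+1/2}$, the one-step increment $\tbw^{n+1}-\tbw^n$, and $\Delta t\,\Res_w^n(\tbx,\te,\tbw)$; this turns the mismatch term into a piece now carrying an explicit $\Delta t$ (handled like the $\delta\bE$-forcing) plus the piece $\tfrac{b_{\ubx}^{n+1/2}-b_{\widetilde{\ubx}}^{n+1/2}}{b_{\widetilde{\ubx}}^{n+1/2}}\,(\tbw^{n+1}-\tbw^n)$. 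This last piece I would control not term by term but inside the Grönwall sum, by summation by parts: after the decomposition $b_{\ubx}^{n+1/2}-b_{\widetilde{\ubx}}^{n+1/2}=\big(b(\uGC^{n+1/2})-b(\widetilde{\uGC}^{\,n+1/2})\big)+\eps\times(\text{bounded Lipschitz})$ the coefficient depends only on the slow variables up to an $\eps$-times-fast correction and varies slowly from step to step, so pairing it with the oscillatory increments $(\tbw^{n+1}-\tbw^n)$ leaves only boundary terms involving $\delta\GC$ (absorbed after the $\eps^2$ weight) plus a sum of Grönwall form $\lesssim\Delta t(\Theta^{n+1}+\Theta^n)$. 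This is exactly where tracking the guiding-center slow variables against $\eps^2\bfw$, rather than $\bfx$ against $\eps\,\bfw$, is indispensable, mirroring the preparation behind Proposition~\ref{p:cont-GC}.

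Finally, summing the three recursions and invoking the equivalences of Step~1 gives
\[
\Theta^{n+1}\le\Theta^n+C\Delta t\,(\Theta^{n+1}+\Theta^n)+C\Delta t\max_{k\le n}\big(\|\Res_x^k(\bfx,e,\bfw)\|+\|\Res_x^k(\tbx,\te,\tbw)\|+|\Res_e^k(\bfx,e,\bfw)|+|\Res_e^k(\tbx,\te,\tbw)|\big)+C\eps^2\max_{k\le n}\big(\|\Res_w^k(\bfx,e,\bfw)\|+\eps^2\|\Res_w^k(\tbx,\te,\tbw)\|\big)\,,
\]
the asymmetric $\eps^2$/$\eps^4$ weighting of $\Res_w$ reflecting that $\Res_w(\bfx,e,\bfw)$ enters the subtracted $\bfw$-equation directly whereas $\Res_w(\tbx,\te,\tbw)$ enters only through the already $\Delta t$-small elimination above. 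For $\delta_0$ small the term $C\Delta t\,\Theta^{n+1}$ is absorbed on the left, and the discrete Grönwall lemma then yields $\Theta^n\lesssim\eD^{CT}\big(\Theta^0+\max_{k\le n}(\cdots)\big)$, which is the stated bound after Step~1. The one genuinely delicate point throughout is the frequency-mismatch estimate: one must arrange, via the elimination through the tilde $\bfw$-equation and the summation by parts, that every surviving contribution to the $\eps^2\|\delta\bfw^n\|$-recursion carries a full power of $\Delta t$ and is dominated by $\Theta$.
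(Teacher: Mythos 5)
Your overall strategy matches the paper's: invert $(\bfx,e)$ as functions of $(\GC,\eGC,\bfw)$ for $\eps$ small, pass to the $\cO(\eps^2)$-corrected slow variables $\yGC=\GC+\eps^2\kappa_2$ and $\gGC=\eGC+\eps^2\kappa_e$, and close a discrete Gr\"onwall estimate on $\Theta^n=\|\delta\yGC^n\|+\|\delta\gGC^n\|+\eps^2\|\delta\bfw^n\|$. Your variant $\Psi^n=\eGC^n+\phi(\GC^n)+\eps^2\kappa_e(\bfx^n,\bfw^n)$ for the energy slot is cosmetic (it makes $\Res_e$ exactly its increment), and the change-of-variable bookkeeping is the same near-identity inversion as the paper uses. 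You are also right that the slow-variable lines of the recursion are uneventful once one reads \eqref{eq:disc-gc} and \eqref{eq:disc-e} with the $\Res$ terms as forcings.

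Where you genuinely part ways with the paper is the fast line. The paper's proof compresses this into a single phrase (``heeding the $\eps$-dependence of equations'') and states the Gr\"onwall recursion without comment, whereas you isolate the stiff frequency mismatch $-\lambda\,(b_{\ubx}^{n+1/2}-b_{\widetilde\ubx}^{n+1/2})\bJ\widetilde\ubw^{n+1/2}$ as the obstruction and propose to tame it by eliminating $\lambda\widetilde\ubw^{n+1/2}$ through the tilde $\bfw$-equation and then summing by parts. This is a reasonable reading of where the difficulty lies: after applying $(\Id+\tfrac12\lambda b\bJ)^{-1}$ and weighting by $\eps^2$, the part of $|\delta b_{\ubx}|\lesssim\|\delta\GC\|+\eps\|\delta\bfw\|$ coming from $\eps\|\delta\bfw\|=\eps^{-1}(\eps^2\|\delta\bfw\|)$ contributes $\min(\Delta t,\eps^2)\,\eps^{-1}\,\Theta\,\|\widetilde\ubw\|$ per step, which with only $\|\widetilde\ubw\|\le M'$ does not yield a Gr\"onwall gain proportional to $\Delta t$, uniformly in $(\eps,\Delta t)$. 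So this step does need a genuine argument, and your instinct to involve the tilde $\bfw$-equation is the right one.

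That said, the proposed fix has two gaps you should close. First, the summation by parts on $\sum_k c_k\,\tfrac{\delta b^k}{b^k}(\tbw^{k+1}-\tbw^k)$ must be performed after unrolling the recursion, i.e.\ under the accumulated products of the maps $(\Id+\tfrac12\lambda b_{\ubx}^k\bJ)^{-1}(\Id-\tfrac12\lambda b_{\ubx}^k\bJ)$; these are unitary but rotate at the fast frequency $\lambda b$, so they are not ``slowly varying'' and cannot be paired with the oscillatory increments $(\tbw^{k+1}-\tbw^k)$ as if they were constants. Second, in your decomposition $\delta b_{\ubx}=(b(\uGC)-b(\widetilde{\uGC}))+\eps\times(\text{bounded Lipschitz})$, the $\eps$-correction is driven by $\eps\|\delta\bfw\|$, which is neither bounded by $\Theta$ nor slowly varying, so it cannot simply be ``absorbed after the $\eps^2$ weight''; the $\eps^2/\eps^4$ asymmetry you obtain for the $\Res_w$-weights also does not fall out cleanly from the elimination you sketch (a direct estimate of the $\delta\Res_w$ source gives $\eps^2$ for both). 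In short: you have correctly identified the step the paper glosses over and proposed a sensible mechanism, but the summation-by-parts argument as written is not yet a proof, and until those two points are addressed your control of the stiff mismatch remains heuristic.
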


\begin{proof}
Note that when $\bfw$ is restricted to a bounded set the map $(\bfx,e,\bfw)\mapsto (\GC-\bfx,\eGC-e)$ has Lipschitz bound $\cO(\eps)$ so that when $\eps$ is small enough (depending on the bound on $\bfw$) by the Banach fixed point theorem one may invert the relation and see $\bfx$, $e$ as functions of $(\GC,\eGC,\bfw)$ with Lipschitz dependence encoded by
\[
\|(\bfx,e)-(\tbx,\te)\|
\lesssim 
\|(\GC,\eGC)-(\tGC,\teGC)\|\,+\,\eps\,\|\bfw-\tbw\|\,.
\] 
In particular it is sufficient to estimate  $\|(\GC^n,\eGC^n)-(\tGC^n,\teGC^n)\|\,+\,\eps^2\,\|\bfw^n-\tbw^n\|$.

Likewise, seeing $\bfx$, $e$ as functions of $(\GC,\eGC,\bfw)$ and restricting to bounded sets of $(\eGC,\bfw)$ the map $(\GC,\eGC,\bfw)\mapsto \eps^2(\kappa_2(\bfx,\GC,e,\bfw),\kappa_e(\bfx,\bfw))$ has Lipschitz bound $\cO(\eps^2)$ so that the relations
\begin{align*}
\yGC&:=\GC+\eps^2\kappa_2(\bfx,\GC,e,\bfw)\,,&
\gGC&:=\eGC+\eps^2\,\kappa_e(\bfx,e)\,,
\end{align*}
may be inverted with Lipschitz dependence
\[
\|(\GC,\eGC)-(\tGC,\teGC)\|
\lesssim 
\|(\yGC,\gGC)-(\tyGC,\tgGC)\|\,+\,\eps^2\,\|\bfw-\tbw\|\,.
\] 
In particular it is sufficient to estimate  $\|(\yGC^n,\gGC^n)-(\tyGC^n,\tgGC^n)\|\,+\,\eps^2\,\|\bfw^n-\tbw^n\|$.

With this in mind one may interpret $\Res$ terms as forcing consistency errors in schemes for $(\yGC,\gGC,\bfw)$ and $(\tyGC,\tgGC,\tbw)$ so that heeding to the $\eps$-dependence of equations for some constant $K$ (independent of $(\eps,\Delta t)$), when $0\leq n\,\Delta t\leq T$
\begin{align*}
\|(\yGC^{n+1},\gGC^{n+1})&-(\tyGC^{n+1},\tgGC^{n+1})\|
\,+\,\eps^2\,\|\bfw^{n+1}-\tbw^{n+1}\|\\
&\,\leq\,
(1\,+\,K \Delta t)
\left(\|(\yGC^n,\gGC^n)-(\tyGC^n,\tgGC^n)\|
\,+\,\eps^2\,\|\bfw^n-\tbw^n\|\right)\\
&\qquad
\,+\,K \Delta t
\left(
\|(\yGC^{n+1},\gGC^{n+1})-(\tyGC^{n+1},\tgGC^{n+1})\|
\,+\,\eps^2\,\|\bfw^{n+1}-\tbw^{n+1}\|
\right)\\
&\qquad\,+\,K \Delta t\,
\left(\|\Res_x^n(\bfx,e,\bfw)\|
+\|\Res_x^n(\tbx,\te,\tbw)\|
+|\Res_e^n(\bfx,e,\bfw)|
+|\Res_e^n(\tbx,\te,\tbw)|\right)\\
&\qquad\,+\,K \Delta t\,\eps^2\,\left(\|\Res_w^n(\bfx,e,\bfw)\|
+\eps^2\,\|\Res_w^n(\tbx,\te,\tbw)\|
\right)
\,.
\end{align*}
One then concludes as in the proof of Proposition~\ref{p:disc-x} by taking $\Delta t$ sufficiently small and invoking a discrete Gr\"onwall lemma.
\end{proof}

\subsection{Consistency analysis}

The goal is obviously to apply Proposition~\ref{p:stab} to compare a solution to \eqref{scheme:CN} with the time discretization of a solution to \eqref{eq:ODE}. On the latter side the discussion at the beginning of the current section already proves the following lemma.

\begin{lemma}\label{le:cont-cons}
Let $T>0$, $M>0$, $\eps_0>0$ and $\delta_0>0$. There exists $C>0$ such that when $0<\eps\leq\eps_0$ and $0<\Delta t\leq\delta_0$ if $(\bfx,\bfv)$ solves \eqref{eq:ODE} with initial datum $(\bfx^0,\bfv^0)$ such that $\|\bfv^0\|\leq M$ and $(\tbx,\te,\tbw)$ is defined by 
\begin{align*}
\tbx^n&:=\bfx(n\,\Delta t)\,,&
\te^n&:=\frac12\|\bfv(n\,\Delta t)\|^2\,,&
\tbw^n&:=\bfv(n\,\Delta t)\,,&
\end{align*}
then for any $n$ such that $0\leq n\Delta t\leq T$,
\[
\|\Res_x^n(\tbx,\te,\tbw)\|
+|\Res_e^n(\tbx,\te,\tbw)|
+\eps^2\,\|\Res_w^n(\tbx,\te,\tbw)\|
\leq C\,\dfrac{(\Delta t)^2}{\eps^4}\,.
\]
\end{lemma}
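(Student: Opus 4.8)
The plan is to treat the three residuals separately. Along the sampled exact solution one has $\GC^n=\bfx(n\,\Delta t)-\eps\,\bfv(n\,\Delta t)^\perp/b(\bfx(n\,\Delta t))=\GC(n\,\Delta t)$, $\eGC^n=\eGC(n\,\Delta t)$, and likewise $\kappa_2^n$, $\kappa_e^n$ agree with the continuous expressions evaluated at $t=n\,\Delta t$. Hence $(\eGC+\phi(\GC)+\eps^2\kappa_e(\bfx,\bfw))^n$ is, as a sequence in $n$, the restriction to the grid of $t\mapsto \eGC(t)+\phi(\GC(t))+\eps^2\kappa_e(\bfx(t),\bfv(t))$, which is constant by \eqref{eq:cont-e}; therefore $\Res_e^n(\tbx,\te,\tbw)=0$ exactly, and only $\Res_x$ and $\Res_w$ remain. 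Both are of midpoint-consistency type, $\frac{A^{n+1}-A^n}{\Delta t}-\cV(\text{midpoints})$ with $\dD A/\dD t=\cV(\cdot)$ along the exact solution --- use \eqref{eq:cont-gc} for $\Res_x$ (with $A=\GC+\eps^2\kappa_2$), and the second equation of \eqref{eq:ODE}, i.e.\ $\dD\bfv/\dD t=\eps^{-1}\bE(\bfx)-\eps^{-2}b(\bfx)\bfv^\perp$, for $\Res_w$. So I would split each as (i) the difference between the divided difference and $\dD A/\dD t$ at the exact midpoint $t_{n+1/2}:=(n+\tfrac12)\,\Delta t$, controlled by the second identity recalled at the start of this section in terms of $\|(\dD/\dD t)^3 A\|_{L^\infty}$, plus (ii) the error from replacing exact midpoint arguments by discrete averages, controlled by the first identity (which gives averaging errors $\cO((\Delta t)^2\|(\dD/\dD t)^2\cdot\|_{L^\infty})$) together with Lipschitz bounds on the right-hand sides.

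The required input is thus a set of a priori bounds on time derivatives of the exact solution. From energy conservation, $\|\bfv(t)\|$ and $|e(t)|$ are bounded on $[0,T]$ uniformly in $\eps$; differentiating \eqref{eq:ODE} and arguing by induction, using that $b,\phi$ and all their derivatives are bounded, yields $\|(\dD/\dD t)^k\bfv\|_{L^\infty}\le C\,\eps^{-2k}$, hence $\|(\dD/\dD t)^k\bfx\|_{L^\infty}\le C\,\eps^{-(2k-1)}$ and, since $e=\mathrm{cst}-\phi(\bfx)$, also $\|(\dD/\dD t)^k e\|_{L^\infty}\le C\,\eps^{-(2k-1)}$ for $k\ge1$. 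As $\GC$, $\eGC$ differ from $\bfx,e$ by $\eps$ times a polynomial in $\bfv$ with smooth bounded coefficients, one gets $\|(\dD/\dD t)^k\GC\|_{L^\infty}+\|(\dD/\dD t)^k\eGC\|_{L^\infty}\le C\,\eps^{-(2k-1)}$. The one place where these crude bounds do not suffice is $\|(\dD/\dD t)^3(\GC+\eps^2\kappa_2)\|_{L^\infty}$: here I would not differentiate the definition but use \eqref{eq:cont-gc}, which gives $(\dD/\dD t)^3(\GC+\eps^2\kappa_2)=(\dD/\dD t)^2\big(\bF(\GC)-\eGC\,\nabla_\bfx^\perp(1/b)(\GC)+\eps^2\,\eta_2(\bfx,\GC,e,\bfv)\big)$; the first two terms depend on $t$ only through slow variables and are $\cO(\eps^{-3})$ by the above, while the third carries an $\eps^2$ in front of a polynomial in $\bfv$ and so contributes only $\eps^2\cdot\cO(\eps^{-4})=\cO(\eps^{-2})$; thus $\|(\dD/\dD t)^3(\GC+\eps^2\kappa_2)\|_{L^\infty}\le C\,\eps^{-3}\le C\,\eps^{-4}$.

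With these bounds, the $\bfw$--residual is immediate: part (i) is $\cO((\Delta t)^2\|(\dD/\dD t)^3\bfv\|)=\cO((\Delta t)^2\eps^{-6})$, and since $(\bfx,\bfw)\mapsto\eps^{-1}\bE(\bfx)-\eps^{-2}b(\bfx)\bfw^\perp$ is Lipschitz with constant $\cO(\eps^{-2})$ --- globally in $\bfx$ because the fields are bounded, and for $\bfw$ on the relevant bounded set --- part (ii) is $\cO(\eps^{-2})\cdot\cO((\Delta t)^2(\eps^{-3}+\eps^{-4}))=\cO((\Delta t)^2\eps^{-6})$ too; hence $\eps^2\,\|\Res_w^n(\tbx,\te,\tbw)\|\le C\,(\Delta t)^2\eps^{-4}$, which is the term that saturates the stated bound. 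For $\Res_x$, part (i) is $\cO((\Delta t)^2\eps^{-4})$ by the refined derivative bound, and in part (ii) the only arguments whose averaging error is as large as $\cO((\Delta t)^2\eps^{-4})$ are the fast ones, which enter the discrete right-hand side only through the $\eps^2$--small coefficient of $\eta_2$, whereas the slow arguments enter $\bF$, $\nabla_\bfx^\perp(1/b)$ with $\cO(1)$ Lipschitz constants but have averaging errors only $\cO((\Delta t)^2\eps^{-3})$; so part (ii) is $\cO((\Delta t)^2\eps^{-3})+\eps^2\cdot\cO((\Delta t)^2\eps^{-4})=\cO((\Delta t)^2\eps^{-3})$ and $\|\Res_x^n(\tbx,\te,\tbw)\|\le C\,(\Delta t)^2\eps^{-4}$. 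Adding the three estimates gives the claim.

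The only genuine difficulty is the bookkeeping of $\eps$--powers: one must check that the sole combination of large prefactor times large consistency/averaging error reaching $\eps^{-6}$ (hence $\eps^{-4}$ after the $\eps^2$ weight in the statement) occurs in the $\bfw$--equation, and that for $\Res_x$ the guiding-center reduction \eqref{eq:cont-gc} has indeed lowered the $t$--regularity cost of $\GC+\eps^2\kappa_2$ by the two powers of $\eps$ needed; the computations behind the derivative bounds and the Lipschitz estimates are otherwise routine.
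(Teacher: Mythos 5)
Your proof is correct and follows the same line the paper itself intends: the paper disposes of this lemma with a single sentence pointing to the opening discussion of Section~\ref{s:convergence} (the midpoint-consistency identities plus the $\|(\dD/\dD t)^k\bfv\|\sim\eps^{-2k}$ scaling), and your argument is exactly that discussion made explicit. The two useful observations you add --- that $\Res_e^n$ vanishes identically on the sampled exact solution because of the exact conservation \eqref{eq:cont-e}, and that one must bound $(\dD/\dD t)^3(\GC+\eps^2\kappa_2)$ via \eqref{eq:cont-gc} rather than by naive differentiation (which would only give $\eps^{-5}$) --- are precisely what makes the paper's one-line pointer a genuine proof, and your $\eps$-bookkeeping for $\Res_w$ and $\Res_x$ matches the paper's predicted ceilings.
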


The following lemma completes the estimates of Proposition~\ref{p:unif} hence providing estimates necessary to apply Proposition~\ref{p:stab}. 

\begin{lemma}\label{le:disc-cons}
Let $T>0$, $M>0$ and $\eps_0>0$. There exist $\delta_0>0$ and $C>0$ such that when $0<\eps\leq\eps_0$ and $0<\Delta t\leq \delta_0$ if $(\bfx,e,\bfw)$ solves \eqref{scheme:CN} with initial datum $(\bfx^0,e^0,\bfw^0)$ such that $\|\bfw^0\|\leq M$, $e^0=\tfrac12\|\bfw^0\|^2$ then when $0\leq n\,\Delta t\leq T$,
\[
\left|\ \ue^{n+1/2}-\frac12\|\ubw^{n+1/2}\|^2\ \right|
\leq C\,\dfrac{(\Delta t)^2}{\eps^4}\,.
\]
\end{lemma}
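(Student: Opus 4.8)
The key quantity to control is $\delta^n := e^n - \tfrac12\|\bfw^n\|^2$, which vanishes at $n=0$ by the choice of initialization. The plan is to derive a recursion for $\delta^n$ from the scheme \eqref{scheme:CN}, show that each step contributes an error of size $\cO((\Delta t)^2/\eps^4)$ or else a contractive/neutral factor, and then sum. First I would compute $\|\bfw^{n+1}\|^2 - \|\bfw^n\|^2$ by taking the scalar product of the $\bfw$-equation of \eqref{scheme:CN} with $\ubw^{n+1/2}$; since $\bJ$ is skew-adjoint the stiff term drops out and one gets exactly
\[
\tfrac12\|\bfw^{n+1}\|^2 - \tfrac12\|\bfw^n\|^2 \,=\, \dfrac{\Delta t}{\eps}\,\bE_{\ubx}^{n+1/2}\cdot\ubw^{n+1/2}\,.
\]
On the other hand the $e$-equation gives $e^{n+1}-e^n = \phi(\bfx^n)-\phi(\bfx^{n+1})$. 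Hence
\[
\delta^{n+1}-\delta^n \,=\, -\big(\phi(\bfx^{n+1})-\phi(\bfx^n)\big)\,-\,\dfrac{\Delta t}{\eps}\,\bE_{\ubx}^{n+1/2}\cdot\ubw^{n+1/2}\,.
\]

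The next step is to recognize the right-hand side as a discrete consistency error for the trivial identity $\tfrac{\dD}{\dD t}(\phi(\bfx)) = \dD_\bfx\phi(\bfx)(\tfrac{\dD\bfx}{\dD t})$. Using the $\bfx$-equation of \eqref{scheme:CN}, $\tfrac{\bfx^{n+1}-\bfx^n}{\Delta t} = \tfrac{\ubw^{n+1/2}}{\eps} - \delta^{n+1/2}_{\mathrm{avg}}\,\nabla^\perp_\bfx(b^{-1})(\ubx^{n+1/2})$ where I write $\delta^{n+1/2}_{\mathrm{avg}} := \ue^{n+1/2}-\tfrac12\|\ubw^{n+1/2}\|^2$ (note $\ue^{n+1/2} = \ud^{n+1/2}+\tfrac12\ud^{n+1/2}\cdots$ — more precisely, by the bilinear averaging rule, $\delta^{n+1/2}_{\mathrm{avg}}= \ud^{n+1/2}-\tfrac18\|\bfw^{n+1}-\bfw^n\|^2$ where $\ud^{n+1/2}:=\tfrac12(\delta^{n+1}+\delta^n)$). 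Substituting, $\phi(\bfx^{n+1})-\phi(\bfx^n)$ should be compared, via a midpoint Taylor expansion of $\phi$, to $\Delta t\,\dD_\bfx\phi(\ubx^{n+1/2})(\tfrac{\bfx^{n+1}-\bfx^n}{\Delta t})$. The two contributions to the right-hand side of the $\delta$-recursion then combine: the $\tfrac{\ubw^{n+1/2}}{\eps}$ part of $\tfrac{\bfx^{n+1}-\bfx^n}{\Delta t}$ cancels against $\tfrac{\Delta t}{\eps}\bE_{\ubx}^{n+1/2}\cdot\ubw^{n+1/2}$ since $\bE = -\nabla_\bfx\phi$, leaving only (i) the Taylor remainder of $\phi$ at the midpoint, of size $\cO(\Delta t\,\|\bfx^{n+1}-\bfx^n\|^2)$, and (ii) a term proportional to $\Delta t\,\delta^{n+1/2}_{\mathrm{avg}}$ times a bounded factor $\dD_\bfx\phi(\ubx^{n+1/2})(\nabla^\perp_\bfx(b^{-1})(\ubx^{n+1/2}))$. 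This gives a recursion of the shape
\[
|\delta^{n+1}-\delta^n| \,\lesssim\, \Delta t\,|\ud^{n+1/2}| \,+\, \Delta t\,\|\bfx^{n+1}-\bfx^n\|^2 \,+\,\|\bfw^{n+1}-\bfw^n\|^2\,,
\]
where the last term comes from the $-\tfrac18\|\bfw^{n+1}-\bfw^n\|^2$ discrepancy between $\delta^{n+1/2}_{\mathrm{avg}}$ and $\ud^{n+1/2}$ multiplied by $\cO(\Delta t)$, and also absorbs a $\cO(\|\bfw^{n+1}-\bfw^n\|^2)$ contribution hidden in comparing $\|\bfw^{n+1}\|^2-\|\bfw^n\|^2$ manipulations — these need to be tracked carefully but are all lower order.

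Now I invoke Proposition~\ref{p:unif}: $\|\bfx^{n+1}-\bfx^n\| + \eps\|\bfw^{n+1}-\bfw^n\| \lesssim \min\{\Delta t/\eps\,;\,\Delta t+\eps\}$, so $\|\bfx^{n+1}-\bfx^n\|^2 \lesssim (\Delta t)^2/\eps^2$ and $\|\bfw^{n+1}-\bfw^n\|^2 \lesssim \min\{(\Delta t)^2/\eps^4\,;\,(\Delta t+\eps)^2/\eps^2\}\lesssim (\Delta t)^2/\eps^4$. Thus
\[
|\delta^{n+1}-\delta^n| \,\lesssim\, \Delta t\,(|\delta^{n+1}|+|\delta^n|) \,+\, \dfrac{(\Delta t)^3}{\eps^4}\,,
\]
using $|\ud^{n+1/2}|\leq \tfrac12(|\delta^{n+1}|+|\delta^n|)$ and $\Delta t\cdot(\Delta t)^2/\eps^2 = (\Delta t)^3/\eps^2 \leq (\Delta t)^3/\eps^4$ (for $\eps\leq 1$; otherwise absorb into constants via $\eps_0$). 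Rearranging, for $\Delta t$ small enough (independently of $\eps$) $|\delta^{n+1}| \leq (1+C\Delta t)|\delta^n| + C(\Delta t)^3/\eps^4$, and since $\delta^0 = 0$, the discrete Grönwall lemma over $n\leq T/\Delta t$ steps yields $|\delta^n| \lesssim (1/\Delta t)\cdot(\Delta t)^3/\eps^4 = (\Delta t)^2/\eps^4$ uniformly on $[0,T]$. Finally, from $|\delta^{n+1/2}_{\mathrm{avg}} - \ud^{n+1/2}| = \tfrac18\|\bfw^{n+1}-\bfw^n\|^2 \lesssim (\Delta t)^2/\eps^4$ and $|\ud^{n+1/2}|\leq \tfrac12(|\delta^{n+1}|+|\delta^n|)\lesssim (\Delta t)^2/\eps^4$, we conclude $|\ue^{n+1/2}-\tfrac12\|\ubw^{n+1/2}\|^2| \lesssim (\Delta t)^2/\eps^4$ as claimed.

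The main obstacle I anticipate is the bookkeeping in the second step: correctly expanding $\delta^{n+1/2}_{\mathrm{avg}}$ versus $\ud^{n+1/2}$ via the bilinear averaging rule and making sure the cancellation between the energy increment $\tfrac{\Delta t}{\eps}\bE_{\ubx}^{n+1/2}\cdot\ubw^{n+1/2}$ and the $\phi$-increment is exact at the midpoint up to a genuine Taylor remainder (of quadratic order in $\bfx^{n+1}-\bfx^n$) rather than picking up a spurious $\cO(\Delta t/\eps)$ term that would ruin the estimate. Everything else is a routine discrete Grönwall argument once the uniform bounds of Proposition~\ref{p:unif} are plugged in.
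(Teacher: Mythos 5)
Your proposal follows essentially the same path as the paper's proof: reduce via the bilinear averaging rule to bounding $\delta^n := e^n - \tfrac12\|\bfw^n\|^2$, compute the kinetic-energy increment by pairing the $\bfw$-equation with $\ubw^{n+1/2}$ (so the stiff magnetic term drops out), substitute $\ubw^{n+1/2}/\eps$ from the $\bfx$-equation to expose the exact cancellation with the midpoint value of $\dD_\bfx\phi$, bound the resulting midpoint Taylor remainder and the $\Delta t\,\delta^{n+1/2}_{\mathrm{avg}}$-type term using Proposition~\ref{p:unif}, and close by a discrete Gr\"onwall argument from $\delta^0=0$. The only slips are cosmetic and harmless: the sign in your identity should read $\delta^{n+1/2}_{\mathrm{avg}}= \ud^{n+1/2}+\tfrac18\|\bfw^{n+1}-\bfw^n\|^2$, and the $\|\bfw^{n+1}-\bfw^n\|^2$ term in your displayed recursion should carry the $\Delta t$ prefactor you in fact describe in the surrounding text.
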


\begin{proof}
To begin with, note that 
\[
\ue^{n+1/2}-\frac12\|\ubw^{n+1/2}\|^2
\,=\,\left(\underline{e-\frac12\|\bfw\|^2}\right)^{n+1/2}
\,+\,\frac18\,\|\bfw^{n+1}-\bfw^n\|^2
\]
so that in view of $\|\bfw^{n+1}-\bfw^n\|$ estimates from Proposition~\ref{p:unif} it is sufficient to bound $e-\frac12\|\bfw\|^2$.

Now recall that by taking the scalar product of the $\bfw$ equation of \eqref{scheme:CN} with $\ubw^{n+1/2}$ one obtains
\[
\,\|\bfw^{n+1}\|^2-\,\|\bfw^{n}\|^2
\,=\,2\,\Delta t\,\bE_{\ubx}^{n+1/2}\cdot \dfrac{\ubw^{n+1/2}}{\eps}\,,
\]
so that 
\begin{align*}
\left(e-\frac12\|\bfw\|^2\right)^{n+1}
-\left(e-\frac12\|\bfw\|^2\right)^n
&\,=\,-\Delta t\,\left(\phi(\bfx^{n+1})-\phi(\bfx^n)
-\dD_\bfx\phi(\ubx^{n+1/2})(\bfx^{n+1}-\bfx^n)\right)\\
&\qquad -\Delta t\,\left(\left(\ue-\frac12\|\ubw\|^2\right)\,\bE_{\ubx}\cdot\nabla_{\bfx}^\perp\left(\frac1b\right)(\ubx)\right)^{n+1/2}
\end{align*}
hence for some constant $K$ (independent of $(\eps,\Delta t)$)
\begin{align*}
\left|\left(e-\frac12\|\bfw\|^2\right)^{n+1}
-\left(e-\frac12\|\bfw\|^2\right)^n\right|
\,&\leq\,K\,\Delta t\,
\left|\left(\underline{e-\frac12\|\bfw\|^2}\right)^{n+1/2}\right|\\
&\quad+K\,\Delta t\,\left(\|\bfx^{n+1}-\bfx^n\|^2+\|\bfw^{n+1}-\bfw^n\|^2\right)\,.
\end{align*}
With estimates from Proposition~\ref{p:unif} one then concludes as in the proof of Proposition~\ref{p:disc-x} by taking $\Delta t$ sufficiently small and invoking a discrete Gr\"onwall lemma.
\end{proof}

\begin{corollary}\label{c:cons}
Let $T>0$, $M>0$, $\eps_0>0$. There exist $\delta_0>0$ and $C>0$ such that when $0<\eps\leq\eps_0$ and $0<\Delta t\leq \delta_0$ if $(\bfx,e,\bfw)$ solves \eqref{scheme:CN} with initial datum $(\bfx^0,e^0,\bfw^0)$ such that $\|\bfw^0\|\leq M$, $e^0=\tfrac12\|\bfw^0\|^2$ then when $0\leq n\,\Delta t\leq T$,
\[
\|\Res_x^n(\bfx,e,\bfw)\|
+|\Res_e^n(\bfx,e,\bfw)|
+\eps^2\,\|\Res_w^n(\bfx,e,\bfw)\|
\leq C\,\dfrac{(\Delta t)^2}{\eps^4}\,.
\]
\end{corollary}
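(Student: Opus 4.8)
The plan is to exploit the structure built into the residuals $\Res_x^n,\Res_e^n,\Res_w^n$ defined just before Proposition~\ref{p:stab}: for a solution of \eqref{scheme:CN}, two of the three residuals vanish identically and the third is exactly the package of remainder terms produced by the discrete elimination process, which one then estimates with Proposition~\ref{p:unif} and Lemma~\ref{le:disc-cons} (both applicable here, since $\|\bfw^0\|\le M$ and $e^0=\tfrac12\|\bfw^0\|^2$ give in particular $|e^0|\le\tfrac12M^2$).

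\emph{Reducing to $\Res_x$.} First, $\Res_w^n(\bfx,e,\bfw)=0$ since this is nothing but the $\bfw$-equation of \eqref{scheme:CN}. Next, $\Res_e^n(\bfx,e,\bfw)=0$ is the content of the exact discrete identity \eqref{eq:disc-e}, itself a consequence of the exact energy balance $e^{n+1}+\phi(\bfx^{n+1})=e^n+\phi(\bfx^n)$ carried by \eqref{scheme:CN} together with the pointwise algebraic relation $\eGC+\phi(\GC)+\eps^2\kappa_e(\bfx,\bfw)=e+\phi(\bfx)$ (a Taylor expansion of $\phi$ about $\bfx$, as in \eqref{eq:cont-e}). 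Finally, comparing the definition of $\Res_x^n$ with \eqref{eq:disc-gc}, a solution of \eqref{scheme:CN} satisfies $\Res_x^n(\bfx,e,\bfw)=\eps\,(\tau_1^n+\ttau_1^n)+\eps^2\,(\tau_2^n+\ttau_2^n)$, so it remains to bound each of $\eps\|\tau_1^n\|$, $\eps\|\ttau_1^n\|$, $\eps^2\|\tau_2^n\|$, $\eps^2\|\ttau_2^n\|$ by $C\,(\Delta t)^2/\eps^4$.

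\emph{The ghost remainders.} Every term of $\ttau_1^n$ and of $\ttau_2^n$ carries the explicit scalar factor $\ue^{n+1/2}-\tfrac12\|\ubw^{n+1/2}\|^2$ multiplied by quantities that are uniformly bounded — here one uses the uniform bounds of Proposition~\ref{p:unif} on $|e^n|,\|\bfw^n\|$ together with the global boundedness of $b,b^{-1},\bE$ and of $\cT_1,\cL_2$ and their derivatives over the relevant range. Hence Lemma~\ref{le:disc-cons} gives $\|\ttau_1^n\|+\|\ttau_2^n\|\lesssim(\Delta t)^2/\eps^4$, so that $\eps\|\ttau_1^n\|+\eps^2\|\ttau_2^n\|\lesssim(\Delta t)^2/\eps^3$.

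\emph{The chain-rule remainders, and the main obstacle.} Into $\tau_1^n,\tau_2^n$ we feed the increment bounds of Proposition~\ref{p:unif}: $\|\bfx^{n+1}-\bfx^n\|$ and $\eps\|\bfw^{n+1}-\bfw^n\|$ are $\lesssim\min\{\Delta t/\eps;\Delta t+\eps\}\le\Delta t/\eps$, whence also $\|\GC^{n+1}-\GC^n\|,|e^{n+1}-e^n|\lesssim\Delta t/\eps$, $\|\bfw^{n+1}-\bfw^n\|\lesssim\Delta t/\eps^2$, and $\|\ubw^{n+1/2}\|\lesssim1$. The term $\tau_1^n$ is controlled directly by the explicit estimate displayed after \eqref{eq:disc-x2}, which then yields $\|\tau_1^n\|\lesssim(\Delta t)^2/\eps^4$, hence $\eps\|\tau_1^n\|\lesssim(\Delta t)^2/\eps^3$. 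The delicate term is $\tau_2^n=\frac{\eps}{\Delta t}\big(\tau_{\cL_2}(\cdots)+\tau_{\cT_1}(\cdots)\big)$: because of the $\eps/\Delta t$ prefactor we need $\|\tau_{\cL_2}^n\|+\|\tau_{\cT_1}^n\|\lesssim(\Delta t)^3/\eps^6$, i.e. \emph{cubic} smallness in the increments. The key point — the step I expect to be the most delicate to write out carefully — is that in Lemmas~\ref{le:disc-1st-bis} and~\ref{le:disc-3rd} every term of $\tau_{\cL}$ and $\tau_{\cT}$ is cubic in the increments: the ``Taylor remainder'' pieces $f(\bfx')-f(\bfx)-\dD f(\tfrac{\bfx'+\bfx}{2})(\bfx'-\bfx)$ are $O(\|\bfx'-\bfx\|^3)$ since the centred midpoint evaluation is one order more accurate than a naive difference; the ``midpoint versus average'' pieces $\tfrac12(f(\bfx')+f(\bfx))-f(\tfrac{\bfx'+\bfx}{2})$ are $O(\|\bfx'-\bfx\|^2)$ but always come multiplied by one further factor $\bfw^{n+1}-\bfw^n$; and the remaining pieces are cubic in $\bfw^{n+1}-\bfw^n$ alone. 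One must moreover check that this cubic gain survives the nested elimination in Lemma~\ref{le:disc-1st-bis}, where $\tau_{\cL_1}$ re-enters $\tau_{\cL_2}$ — it does, since $\tau_{\cL_1}$ is itself $O((\Delta t/\eps)^3)$. Granting this, $\|\tau_{\cL_2}^n\|+\|\tau_{\cT_1}^n\|\lesssim(\Delta t/\eps^2)^3$, so $\eps^2\|\tau_2^n\|=\frac{\eps^3}{\Delta t}\,\|\tau_{\cL_2}^n+\tau_{\cT_1}^n\|\lesssim(\Delta t)^2/\eps^3$. (A merely quadratic estimate would give only $\eps\,\Delta t$, which is not $\lesssim(\Delta t)^2/\eps^4$ when $\Delta t\ll\eps^5$, so the extra order is genuinely needed.) Adding the four contributions gives $\|\Res_x^n(\bfx,e,\bfw)\|\lesssim(\Delta t)^2/\eps^3\le(\Delta t)^2/\eps^4$, and together with $\Res_e^n=\Res_w^n=0$ this is the claim.
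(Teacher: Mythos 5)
Your proof is correct and is essentially the argument the paper leaves implicit (the corollary is stated without a separate proof, immediately after Lemma~\ref{le:disc-cons}, as a bookkeeping consequence of the preparatory work). You correctly identify that for a solution of \eqref{scheme:CN} the $\bfw$-residual vanishes identically, that $\Res_e^n=0$ follows from the pointwise algebraic identity $\eGC+\phi(\GC)+\eps^2\kappa_e(\bfx,\bfw)=e+\phi(\bfx)$ (the exact Taylor remainder behind $\kappa_e$) combined with the exact discrete energy balance $e^{n+1}+\phi(\bfx^{n+1})=e^n+\phi(\bfx^n)$, and that $\Res_x^n=\eps(\tau_1^n+\ttau_1^n)+\eps^2(\tau_2^n+\ttau_2^n)$ by comparing the definition of $\Res_x^n$ with \eqref{eq:disc-gc}. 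You also put your finger on the one point that actually carries the estimate: the $\tau_\sharp$ remainders in the discrete elimination lemmas are built (by design, via midpoint differentiation and midpoint-versus-average splittings) to be at least \emph{cubic} in the step increments $(\bfx^{n+1}-\bfx^n,\GC^{n+1}-\GC^n,e^{n+1}-e^n,\bfw^{n+1}-\bfw^n)$, with the worst terms being those cubic purely in $\bfw^{n+1}-\bfw^n$, and that this cubic gain survives the nesting $\tau_{\cL_1}\hookrightarrow\tau_{\cL_2}$. Fed with the increment bounds $\|\bfx^{n+1}-\bfx^n\|,\eps\|\bfw^{n+1}-\bfw^n\|\lesssim\Delta t/\eps$ from Proposition~\ref{p:unif} this produces $\|\tau_{\cL_2}\|+\|\tau_{\cT_1}\|\lesssim(\Delta t)^3/\eps^6$, hence $\eps^2\|\tau_2^n\|\lesssim(\Delta t)^2/\eps^3\leq(\Delta t)^2/\eps^4$, and similarly $\eps\|\tau_1^n\|\lesssim(\Delta t)^2/\eps^3$, while the ghost terms $\ttau_1^n,\ttau_2^n$ are handled directly by Lemma~\ref{le:disc-cons}. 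Your remark that a merely quadratic bound on the $\tau$'s would not suffice in the regime $\Delta t\ll\eps^5$ is accurate and explains why the scheme's remainders must be organized in this cubic form. No gaps.
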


\subsection{Convergence}

\begin{theorem}\label{th:conv}
Let $T>0$, $M>0$ and $\eps_0>0$. There exist $\delta_0>0$ and $C>0$ such that when $0<\eps\leq\eps_0$ and $0<\Delta t\leq\delta_0$ if $(\bfx,\bfv)$ solves \eqref{eq:ODE} with initial datum $(\bfx^0,\bfv^0)$ such that $\|\bfv^0\|\leq M$ and $(\tbx,\te,\tbw)$ is defined by 
\begin{align*}
\tbx^n&:=\bfx(n\,\Delta t)\,,&
\te^n&:=\frac12\|\bfv(n\,\Delta t)\|^2\,,&
\tbw^n&:=\bfv(n\,\Delta t)\,,&
\end{align*}
and $(\bfx,e,\bfw)$ solves \eqref{scheme:CN} with initial datum $(\bfx^0,e^0,\bfw^0)$ such that $\bfw^0=\bfv^0$, $e^0=\tfrac12\|\bfv^0\|^2$, 
then for any $n$ such that $0\leq n\Delta t\leq T$,
\[
\|(\GC^n,\eGC^n)-(\tGC^n,\teGC^n)\|
\,+\,\eps\,\|(\bfx^n,e^n)-(\tbx^n,\te^n)\|
\,+\,\eps^2\,\|\bfw^n-\tbw^n\|\\
\leq C\,\dfrac{(\Delta t)^2}{\eps^4}\,.
\]
\end{theorem}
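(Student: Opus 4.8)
The plan is simply to feed the numerical solution of~\eqref{scheme:CN} and the sampled exact solution into the abstract stability estimate of Proposition~\ref{p:stab}, and then to bound the resulting right-hand side with the consistency results already in hand. Concretely, take $(\bfx,e,\bfw)$ to be the solution of~\eqref{scheme:CN} with $\bfw^0=\bfv^0$, $e^0=\tfrac12\|\bfv^0\|^2$, and let $(\tbx,\te,\tbw)$ be as in the statement, $\tbx^n=\bfx(n\,\Delta t)$, $\te^n=\tfrac12\|\bfv(n\,\Delta t)\|^2$, $\tbw^n=\bfv(n\,\Delta t)$.

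First I would fix the constants. For given $T$, $M$, $\eps_0$, Proposition~\ref{p:unif} furnishes $\delta_0$, $C$ and an $M'$ with $|e^n|+\|\bfw^n\|\le M'$ for $0\le n\,\Delta t\le T$; enlarging $M'$ if needed, the continuous energy bound $\|\bfv(t)\|\le\|\bfv^0\|+2\sqrt{\|\phi\|_{L^\infty}}$ recalled in Section~\ref{s:continuous} ensures also $\|\tbw^n\|\le M'$ and $|\te^n|\le\tfrac12(M')^2$, so both sequences satisfy the boundedness hypotheses of Proposition~\ref{p:stab} for this $M'$. Feeding this $M'$ into Proposition~\ref{p:stab} yields a (possibly smaller) $\eps_0$, a $\delta_0$ and a constant $C$; the admissible thresholds in the theorem are then the intersection of those coming from Propositions~\ref{p:unif},~\ref{p:stab}, Corollary~\ref{c:cons} and Lemma~\ref{le:cont-cons}.

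Next I would dispose of the initial-data term: since $\bfx^0=\tbx^0$, $e^0=\tfrac12\|\bfv^0\|^2=\te^0$, $\bfw^0=\bfv^0=\tbw^0$, and $\GC$, $\eGC$ are defined pointwise from $(\bfx,e,\bfw)$, we have $(\GC^0,\eGC^0)=(\tGC^0,\teGC^0)$ and $\bfw^0-\tbw^0=\b0$, so that contribution vanishes. For the residual terms, Corollary~\ref{c:cons} (applicable because $e^0=\tfrac12\|\bfw^0\|^2$) gives $\|\Res_x^k(\bfx,e,\bfw)\|+|\Res_e^k(\bfx,e,\bfw)|+\eps^2\|\Res_w^k(\bfx,e,\bfw)\|\lesssim(\Delta t)^2/\eps^4$ for $0\le k\,\Delta t\le T$, while Lemma~\ref{le:cont-cons} gives the same bound for $(\tbx,\te,\tbw)$; moreover $\eps^4\|\Res_w^k(\tbx,\te,\tbw)\|\le\eps_0^2\,\eps^2\|\Res_w^k(\tbx,\te,\tbw)\|\lesssim(\Delta t)^2/\eps^4$. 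Substituting these into the conclusion of Proposition~\ref{p:stab} bounds $\|(\GC^n,\eGC^n)-(\tGC^n,\teGC^n)\|+\eps\|(\bfx^n,e^n)-(\tbx^n,\te^n)\|+\eps^2\|\bfw^n-\tbw^n\|$ by $C\,(\Delta t)^2/\eps^4$, which is exactly the assertion.

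There is essentially no remaining obstacle at this stage: the delicate work has been front-loaded into Proposition~\ref{p:stab}, whose proof required passing to the twice-prepared guiding-center variables $(\yGC,\gGC,\bfw)$ precisely so that the slow-to-fast Lipschitz constant --- which scales like $\eps^{-2}$ because $b$ is non-constant --- is tamed, and into the consistency bounds of Lemma~\ref{le:cont-cons}, Corollary~\ref{c:cons} and Lemma~\ref{le:disc-cons}, which themselves rest on the uniform bounds of Proposition~\ref{p:unif} and the discrete elimination lemmas. The only point requiring genuine care is the order of quantifiers between Propositions~\ref{p:unif} and~\ref{p:stab}: the bound $M'$ produced by the former is an \emph{input} parameter of the latter, so the two must be chained in that order before intersecting the smallness constraints on $(\eps,\Delta t)$.
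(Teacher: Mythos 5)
Your argument reproduces the first (and main) half of the paper's proof correctly: both chain Proposition~\ref{p:unif} into Proposition~\ref{p:stab}, notice that the initial-data discrepancy vanishes because the two sequences share the same initial triple, and bound the residuals by Corollary~\ref{c:cons} and Lemma~\ref{le:cont-cons}, weighted exactly as they appear in the conclusion of Proposition~\ref{p:stab}.

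However there is a genuine gap. In the theorem's statement $\eps_0$ is \emph{given} arbitrarily, and only $\delta_0$ and $C$ are to be produced; you are not free to shrink $\eps_0$. Proposition~\ref{p:stab}, on the other hand, \emph{produces} its own $\eps_0'$ (and cannot do otherwise: its proof inverts the near-identity maps $(\bfx,e,\bfw)\mapsto(\GC,\eGC)$ and $(\GC,\eGC,\bfw)\mapsto(\yGC,\gGC)$ via the Banach fixed-point theorem, which requires $\eps$ to be small relative to $M'$). Your phrase ``the admissible thresholds in the theorem are then the intersection of those coming from...'' silently replaces the given $\eps_0$ by $\min(\eps_0,\eps_0')$, which changes the statement. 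When $\eps_0'<\eps_0$, the regime $\eps_0'<\eps\leq\eps_0$ is left uncovered. The paper closes this gap by observing that for $\eps$ bounded below by the fixed $\eps_0'$, a standard non-stiff direct convergence argument on the original variables $(\bfx,e,\bfw)$ (with constants depending on $\eps_0'$, hence allowed to be large) gives the missing bound. You need to add this second step, or equivalently argue separately on the compact range $\eps\in[\eps_0',\eps_0]$.

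A second, minor remark: to apply Proposition~\ref{p:stab} to $(\tbx,\te,\tbw)$ you need not only the $L^\infty$ bounds on $\tbw^n$ and $\te^n$ (which you correctly obtain from energy conservation), but also that the residuals $\Res_x$, $\Res_e$, $\Res_w$ evaluated on this sampled exact trajectory make sense and are bounded, which is what Lemma~\ref{le:cont-cons} provides and what the paper's opening sentence ``Section~\ref{s:continuous} and Proposition~\ref{p:unif} provide the bounds necessary'' refers to. You invoke Lemma~\ref{le:cont-cons} for the residual \emph{size}, which is right, but it is worth stating explicitly that the continuous elimination lemmas of Section~\ref{s:continuous} are what ensure the multilinear objects inside $\Res$ stay bounded along the exact trajectory.
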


\begin{proof}
Note that Section~\ref{s:continuous} and Proposition~\ref{p:unif} provide the bounds necessary to apply Proposition~\ref{p:stab}. Therefore by combining Proposition~\ref{p:stab} with Lemma~\ref{le:cont-cons} and Corollary~\ref{c:cons} provides the theorem under the extra assumption (needed to apply Proposition~\ref{p:stab}) that $0<\eps\leq \eps_0'$ for some sufficiently small $\eps_0'$.

To complete the proof it is sufficient to observe that when $\eps_0'>0$ is held fixed and $\eps_0'\leq\eps\leq \eps_0$ standard direct convergence analysis in original variables $(\bfx,e,\bfw)$ provides the missing bounds (with constants blowing up dramatically in the limit $\eps_0'\to0$).
\end{proof}


\section{Proof of the main theorem, Theorem~\ref{th:main}}\label{s:main}

In this short section we explain how to combine the previous elements so as to conclude the proof of Theorem~\ref{th:main}. Half of the conclusion is the bound 
\[
\left\|(\bfx^n,e^n)-\left(\bfx,\tfrac12\|\bfv\|^2\right)(n\,\Delta t)\right\|
\,+\,\eps\,\|\bfw^n-\bfv(n\,\Delta t)\|\\
\leq C\,\dfrac{(\Delta t)^2}{\eps^5}\,,
\]
which is a direct consequence of Theorem~\ref{th:conv}.

The second half is 
\[
\left\|(\bfx^n,e^n)-\left(\bfx,\tfrac12\|\bfv\|^2\right)(n\,\Delta t)\right\|
\,+\,\eps\,\|\bfw^n-\bfv(n\,\Delta t)\|\\
\leq C\,\left(\,\eps+(\Delta t)^2\,\right)\,.
\]
It arises from the triangle inequality and four distinct inequalities,
\begin{align*}
\left\|(\bfx^n,e^n)-(\bfy^n,g^n)\right\|&\leq C\,\eps\,,&
\left\|\left(\bfx,\tfrac12\|\bfv\|^2\right)(n\,\Delta t)-(\bfy,g)(n\,\Delta t)\right\|&\leq C\,\eps\,,\\[0.5em]
\eps\,\|\bfw^n-\bfv(n\,\Delta t)\|&\leq C\,\eps\,,&
\left\|(\bfy^n,g^n)-(\bfy,g)(n\,\Delta t)\right\|&\leq C\,(\Delta t)^2\,.
\end{align*}
The first inequality stems from Proposition~\ref{p:disc-x}, the second one from Proposition~\ref{p:cont-x}, the third one from uniform bounds on $(\bfw^n)$ and $\bfv$ (the former being part of Proposition~\ref{p:unif}) and the fourth is a direct convergence analysis for the Scheme~\eqref{scheme:CN} towards System~\eqref{eq:asymp} that we omit as it is a non-stiff simpler version of the arguments detailed to prove Theorem~\ref{th:conv}.

\appendix

\section{Implicit solving}\label{s:implicit}

The asymptotic and convergence analyses of \eqref{scheme:CN} are carried out independently of knowing whether the implicit schemes \eqref{scheme:CN} and \eqref{scheme:asymp} do possess unique solutions. Our present goal is to briefly check that this may be enforced by taking $\Delta t$ small (independently of $\eps$). 

\begin{proposition}\label{p:asymp-implicit}
Let $T>0$, $M>0$. There exist $\delta_0>0$ such that when $0<\Delta t\leq \delta_0$ if $(\bfy,g)$ satisfies $|g|\leq \tfrac12M^2$ then there exists a unique $(\bfy',g')$ such that
\begin{equation} \label{scheme:asymp-implicit}
\begin{cases}
        & \dfrac{\bfy'- \bfy}{\Delta t} \,=\, - \dfrac{\bE^{\perp}}{b}\left(\dfrac{\bfy'+\bfy}{2}\right)
        \,-\,\dfrac{g'+g}{2}\nabla_{\bfx}^{\perp}\left(\dfrac1b\right)\left(\dfrac{\bfy'+\bfy}{2}\right)\,, \\
        & \dfrac{g'- g}{\Delta t} \, = \,-\dfrac{\phi(\bfy') - \phi(\bfy)}{\Delta t} \,.
\end{cases}
\end{equation}
\end{proposition}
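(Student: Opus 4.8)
The scheme \eqref{scheme:asymp-implicit} does not involve $\eps$, so this is purely a statement about small $\Delta t$. The plan is to eliminate $g'$ and recast the whole system as a fixed-point problem for $\bfy'$ alone, then invoke the Banach fixed point theorem on $\R^2$.

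First I would use the second equation of \eqref{scheme:asymp-implicit} to write $g'=g-\bigl(\phi(\bfy')-\phi(\bfy)\bigr)$, so that $g'$ is an explicit function of $\bfy'$ and the data. Substituting this relation into the first equation, solving \eqref{scheme:asymp-implicit} is equivalent to finding $\bfy'\in\R^2$ with $\bfy'=\Phi(\bfy')$, where
\begin{align*}
\Phi(\bfy')\,:=\,\bfy\,&-\,\Delta t\,\dfrac{\bE^{\perp}}{b}\left(\dfrac{\bfy'+\bfy}{2}\right)\\
&-\,\Delta t\,\left(g-\tfrac12\bigl(\phi(\bfy')-\phi(\bfy)\bigr)\right)\,\nabla_{\bfx}^{\perp}\left(\dfrac1b\right)\left(\dfrac{\bfy'+\bfy}{2}\right)\,,
\end{align*}
after which $g':=g-\bigl(\phi(\bfy')-\phi(\bfy)\bigr)$ recovers the second unknown.

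Next I would check that $\Phi$ is a contraction of $\R^2$ once $\Delta t$ is small. Since $b$ is bounded and bounded below by $b_0>0$ and $b$, $\phi$, $\bE$ and all their derivatives are bounded, the maps $\bfz\mapsto(\bE^{\perp}/b)(\bfz)$ and $\bfz\mapsto\nabla_{\bfx}^{\perp}(1/b)(\bfz)$ are bounded and globally Lipschitz, and $\phi$ is bounded and globally Lipschitz. Combined with the hypothesis $|g|\leq\tfrac12M^2$, a routine differentiation of $\Phi$ then produces a constant $K$, depending only on $M$ and the fields, with $\mathrm{Lip}(\Phi)\leq K\,\Delta t$ on all of $\R^2$. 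Choosing $\delta_0>0$ so that $K\,\delta_0<1$, the Banach fixed point theorem yields, for every $0<\Delta t\leq\delta_0$, a unique $\bfy'\in\R^2$ with $\bfy'=\Phi(\bfy')$, hence a unique $(\bfy',g')$ solving \eqref{scheme:asymp-implicit}.

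I do not expect a genuine obstacle here; the one point to watch is that $K$, and therefore $\delta_0$, is independent of $\Delta t$ (and trivially of $\eps$, which is absent), which is precisely what the global boundedness of the fields and their derivatives together with the a priori bound on $g$ guarantee. An entirely analogous fixed-point argument — only slightly more delicate, since one must then keep track of how the relevant Lipschitz constant depends on $\eps$ — handles the full scheme \eqref{scheme:CN}.
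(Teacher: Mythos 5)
Your proposal is correct and is essentially the same as the paper's proof: eliminate $g'$ from the second equation, substitute to obtain a fixed-point equation for $\bfy'$ alone, observe that the resulting map has Lipschitz constant $\cO(\Delta t)$ (uniformly for $|g|\leq\tfrac12 M^2$, thanks to the global boundedness of the fields and their derivatives), and apply the Banach fixed-point theorem.

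Incidentally, your displayed formula for $\Phi$ has the correct sign in front of the $\nabla_{\bfx}^{\perp}(1/b)$ term, matching the scheme \eqref{scheme:asymp-implicit}; the corresponding display in the paper's proof carries a small sign typo (it has $\ -\ $ where a $\ +\ $ would be needed after factoring out $-\Delta t$), which does not affect the Lipschitz estimate or the conclusion.
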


\begin{proof}
The second equation provides $g'$ as a function of $(\bfy,g,\bfy')$ that may be inserted in the first equation so as to reduce solving System~\ref{scheme:asymp-implicit} to finding a solution $\bfy'$ to a single scalar equation, explicitly written as
\[
\bfy'\,=\,\bfy-\Delta t\,\left(\dfrac{\bE^{\perp}}{b}\left(\dfrac{\bfy'+\bfy}{2}\right)
\,-\,\left(g-\dfrac{\phi(\bfy') - \phi(\bfy)}{2}\right)\nabla_{\bfx}^{\perp}\left(\dfrac1b\right)\left(\dfrac{\bfy'+\bfy}{2}\right)
\right)\,.
\]
With respect to the $\bfy'$ variable the right-hand side of the foregoing equation has Lipschitz constant $\cO(\Delta t)$ (when $g$ is constrained to vary in a compact set). Thus the proposition follows from the Banach fixed-point theorem.
\end{proof}

\begin{proposition}\label{p:implicit}
Let $T>0$, $M>0$, $\eps_0>0$. There exist $\delta_0>0$ such that when $0<\eps\leq\eps_0$ and $0<\Delta t\leq \delta_0$ if $(\bfx,e,\bfw)$ satisfies $\|\bfw\|\leq M$, $|e|\leq \tfrac12M^2$ then there exists a unique $(\bfx',e',\bfw')$ such that
\begin{equation} \label{scheme:CN-implicit}
\begin{cases}
        & \dfrac{\bfx'- \bfx}{\Delta t} \, =\,\dfrac{\bfw'+\bfw}{2\,\eps}
        \,-\,\left(\dfrac{e'+e}{2}\,-\, \dfrac{1}{2} \left\|\dfrac{\bfw'+\bfw}{2}\right\|^{2}\right)\nabla_{\bfx}^{\perp}\left(\dfrac1b\right)\left(\dfrac{\bfx'+\bfx}{2}\right)\,, \\[0.5em]
        & \dfrac{e' - e}{\Delta t} \,= \,-\dfrac{\phi(\bfx') - \phi(\bfx)}{\Delta t} \,,\\[0.5em]
        & \dfrac{\bfw'- \bfw}{\Delta t} \,= \, \dfrac{1}{\eps} \bE\left(\dfrac{\bfx'+\bfx}{2}\right) \, - \, b\left(\dfrac{\bfx'+\bfx}{2}\right) \dfrac{(\bfw')^{\perp}+\bfw^{\perp}}{2\,\eps^{2}} \,. 
\end{cases}
\end{equation}
\end{proposition}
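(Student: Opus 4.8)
The plan is to recast System~\eqref{scheme:CN-implicit} as a fixed-point equation in the single unknown $\bfx'$ and invoke the Banach fixed-point theorem, the crucial point being to invert the stiff linear block \emph{explicitly}, via Lemma~\ref{le:stab}, so that no negative power of $\eps$ or $\Delta t$ survives in the relevant Lipschitz constants.

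First I would eliminate $e'$ and $\bfw'$. The second equation gives $e'=e-\phi(\bfx')+\phi(\bfx)$ outright. Setting $\lambda:=\Delta t/\eps^2$, $\overline{\bfx}:=\tfrac12(\bfx'+\bfx)$ and $\alpha:=\tfrac\lambda2\,b(\overline{\bfx})$, the third equation reads $(\Id+\alpha\bJ)\,\bfw'=(\Id-\alpha\bJ)\,\bfw+\eps\lambda\,\bE(\overline{\bfx})$, so by Lemma~\ref{le:stab}(1) it has the unique solution
\[
\bfw'=\bfw'(\bfx'):=(\Id+\alpha\bJ)^{-1}\big[(\Id-\alpha\bJ)\,\bfw+\eps\lambda\,\bE(\overline{\bfx})\big].
\]
Here Lemma~\ref{le:stab}(1)--(2) give $\|(\Id+\alpha\bJ)^{-1}(\Id-\alpha\bJ)\|=1$, $\|(\Id+\alpha\bJ)^{-1}\|\le1$ and $\eps\lambda\,\|(\Id+\alpha\bJ)^{-1}\|\le 2\eps/b_0$, hence $\|\bfw'(\bfx')\|\le M+2\eps_0\,\|\bE\|_{L^\infty}/b_0$ for every $\bfx'$; and differentiating the resolvent through the comparison formula of Lemma~\ref{le:stab}(3), together with $\lambda/(1+\alpha^2)\le1/b_0$ and $\lambda^2/(1+\alpha^2)\le4/b_0^2$, yields a bound on $\partial_{\bfx'}\bfw'(\bfx')$ that is uniform in $(\eps,\Delta t)$. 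Substituting $e'$ and $\bfw'(\bfx')$ into the first equation, and using the third equation once more to rewrite $\tfrac{\Delta t}{2\eps}(\bfw'+\bfw)$ as $\tfrac{\eps}{b(\overline{\bfx})}(\bfw'-\bfw)^\perp-\Delta t\,\tfrac{\bE^\perp}{b}(\overline{\bfx})$ — so that the prefactor $1/\eps$ disappears — recasts it as $\bfx'=F(\bfx')$, where
\[
F(\bfx'):=\bfx+\frac{\eps}{b(\overline{\bfx})}\big(\bfw'(\bfx')-\bfw\big)^\perp
-\Delta t\left[\frac{\bE^\perp}{b}(\overline{\bfx})+\left(e-\tfrac12\big(\phi(\bfx')-\phi(\bfx)\big)-\tfrac12\|\tfrac12(\bfw'(\bfx')+\bfw)\|^2\right)\nabla_{\bfx}^{\perp}\!\big(\tfrac1b\big)(\overline{\bfx})\right].
\]

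From the bounds above one checks that $F$ maps a fixed closed ball $\overline{B}(\bfx,R_0)$ into itself — indeed $\|F(\bfx')-\bfx\|=\cO(\eps+\Delta t)$ — and that $\|\partial_{\bfx'}F\|\le C\,(\eps+\Delta t)$ for a constant $C$ depending only on $M$, $\eps_0$ and the fields, the $\cO(\eps)$ coming entirely from the term $\eps\,\partial_{\bfx'}\bfw'(\bfx')$. Thus, provided $\eps\le\eps_0':=\min(\eps_0,1/(4C))$ and $\Delta t\le1/(4C)$, $F$ is a $\tfrac12$-contraction on $\overline{B}(\bfx,R_0)$ and has a unique fixed point there; since any solution $(\bfx',e',\bfw')$ of \eqref{scheme:CN-implicit} has $\bfx'$ a fixed point of $F$ with $\|\bfx'-\bfx\|=\cO(\eps_0+\Delta t)$ a priori, choosing $R_0$ large makes this uniqueness global, and recovering $e'$ and $\bfw'$ settles the case $\eps\le\eps_0'$. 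In the complementary range $\eps_0'\le\eps\le\eps_0$ the coefficients $1/\eps$ and $1/\eps^2$ are bounded by $1/(\eps_0')^2$; eliminating $e'$ as before and applying Banach directly to the map $(\bfx',\bfw')\mapsto$ (right-hand sides of the first and third equations of \eqref{scheme:CN-implicit}), whose Lipschitz constant is $\cO(\Delta t/(\eps_0')^2)$, gives existence and uniqueness once $\Delta t$ is small depending only on $\eps_0'$, which is a fixed number. Taking $\delta_0$ to be the smaller of the two thresholds completes the argument.

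The main obstacle, and the reason for the two-regime split, is uniformity in $\eps$: a naive fixed point on the full triple $(\bfx',e',\bfw')$ has Lipschitz constant $\cO(\Delta t/\eps^2)$, which would only yield $\delta_0\sim\eps^2$. Inverting the stiff $\bfw'$-block with Lemma~\ref{le:stab} removes the $\eps^{-2}$ and $\eps^{-1}$ factors, but the dependence of $\bfw'(\bfx')$ on $\bfx'$ through the nonconstant amplitude $b$ leaves an irreducible $\cO(\eps)$ contribution to $\partial_{\bfx'}F$ — genuinely $\cO(\eps)$ rather than $\cO(\Delta t)$, since it is largest near $\Delta t\sim\eps^2$ — so the contraction is only available once $\eps$ is itself small, the complementary non-stiff regime being classical.
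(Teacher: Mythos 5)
Your proof is correct but follows a genuinely different route from the paper's. You collapse the whole system to a \emph{single} fixed-point equation in $\bfx'$: you eliminate $e'$ trivially, then invert the stiff velocity block via Lemma~\ref{le:stab} to get an explicit $\bfw'(\bfx')$, and crucially you rewrite $\tfrac{\Delta t}{2\eps}(\bfw'+\bfw)$ as $\tfrac{\eps}{b(\ubx)}(\bfw'-\bfw)^\perp-\Delta t\,\tfrac{\bE^\perp}{b}(\ubx)$ to kill the explicit $1/\eps$ in the $\bfx'$-equation. The paper instead performs a \emph{nested} fixed point: it first freezes $\ubw$ and solves for $\bfx'$ (an $\cO(\Delta t)$-contraction since $\ubw/\eps$ is then a constant), obtaining $\bfx'=\bfx'(\bfx,e,\ubw)$ with Lipschitz constant $\cO(\Delta t/\eps)$ in $\ubw$, and then iterates in $\ubw$ via the resolvent identity. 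The two decompositions lead to quantitatively different smallness. Your map $F$ has $\|\partial_{\bfx'}F\|=\cO(\eps+\Delta t)$ with a genuinely irreducible $\cO(\eps)$ piece coming from $\eps\,\partial_{\bfx'}\bfw'(\bfx')$ (indeed $\partial_{\bfx'}\bfw'$ is only $\cO(\min(\lambda,1/\lambda)+\eps)=\cO(1)$ near $\lambda=\Delta t/\eps^2\sim1$), which is why you correctly need the two-regime split with a separate classical argument for $\eps$ bounded below. In the paper's nesting, the same $\cO(\min(\lambda,1/\lambda)+\eps)$ sensitivity of the resolvent in $\bfx'$ gets \emph{composed} with $\partial_{\ubw}\bfx'=\cO(\Delta t/\eps)$, and $\min(\lambda,1/\lambda)\cdot\Delta t/\eps\leq\sqrt{\Delta t}$ while $\eps\cdot\Delta t/\eps=\Delta t$, so the outer Lipschitz constant is controlled by $\Delta t$ alone (the paper states $\cO(\Delta t)$; a careful bookkeeping gives $\cO(\sqrt{\Delta t})$, still sufficient) and no split into regimes is needed. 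In short: your elimination is more explicit and the $\perp$-rewrite is a neat observation, at the price of requiring $\eps$ small and a separate non-stiff argument; the paper's nested iteration buys uniformity in $\eps$ for free. Both are valid proofs of the statement.
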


\begin{proof}
The strategy is to make the most of stability properties in Lemma~\ref{le:stab} so as to extend the proof of Proposition~\ref{p:asymp-implicit}.

To begin with we observe that one may replace $\bfw'$ with $\ubw:=(\bfw'+\bfw)/2$ since $\bfw'$ is then recovered through $\bfw'=\bfw+2\,(\ubw-\bfw)$. Before repeating the elements of the proof of Proposition~\ref{p:asymp-implicit} we need to derive an a priori bound on $\ubw$. With $\bJ$ as in \eqref{notation_J} (and Lemma~\ref{le:stab}) and $\lambda:=\Delta t/\eps^2$ the third equation of \eqref{scheme:CN-implicit} may be written as
\begin{equation}\label{eq:aux-implicit}
\ubw=\left(\Id+\frac{\lambda}{2}\,b\left(\dfrac{\bfx'+\bfx}{2}\right)\bJ\right)^{-1}\left(\bfw
+\frac{\eps\,\lambda}{2}\,\bE\left(\dfrac{\bfx'+\bfx}{2}\right)\right)
\end{equation}
so that it follows from Lemma~\ref{le:stab} that for some constant $K$ (depending on the lower bound on $b$) any solution $(\bfx',e',\bfw')$ to \eqref{scheme:CN-implicit} satisfies $\|\ubw\|\leq K\,(M+\eps_0\,\|\bE\|_{L^\infty})$. 

Then we note that the second equation of System~\ref{scheme:CN-implicit} provides $e'$ as a function of $(\bfx,e,\bfx')$ and that the outcome may be inserted in its first equation so as to reduce it to
\[
\bfx'\,=\,\bfx+\Delta t\,\left(
\dfrac{\ubw}{\eps}
\,-\,\left(e-\dfrac{\phi(\bfx')-\phi(\bfx)}{2}\,-\, \dfrac{1}{2} \left\|\ubw\right\|^{2}\right)\nabla_{\bfx}^{\perp}\left(\dfrac1b\right)\left(\dfrac{\bfx'+\bfx}{2}\right)
\right)\,.
\]
Heeding the a priori bound on $\ubw$, we observe that with respect to the $\bfx'$ variable the right-hand side of the foregoing equation has Lipschitz constant $\cO(\Delta t)$. Thus provided that $\Delta t$ is sufficiently small (independently of $\eps$) the Banach fixed-point theorem implies that this equation prescribes uniquely $\bfx'$ as a function of $(\bfx,e,\ubw)$. For latter use we note that with respect to the $\ubw$ variable the Lipschitz constant of the implicit map $(\bfx,e,\ubw)\mapsto \bfx'$ is of size $\cO(\Delta t/\eps)$. 

System \eqref{scheme:CN-implicit} is thus reduced to Equation~\eqref{eq:aux-implicit} where $\bfx'$ is replaced with the foregoing function of $(\bfx,e,\ubw)$. With this point of view Lemma~\ref{le:stab} implies that with respect to the $\ubz$ variable the right-hand side of Equation~\eqref{eq:aux-implicit} has Lipschitz constant $\cO(\Delta t)$. Therefore applying the Banach fixed-point theorem again concludes the proof. 
\end{proof}

Note that Propositions~\ref{p:asymp-implicit} and~\ref{p:implicit} deal with the solving of a single step of respective schemes. Nevertheless they may be combined with uniform estimates, as provided by Proposition~\ref{p:unif}, to be turned into verifications of the solvability property until times $0\leq n\Delta t\leq T$ provided that $\Delta t$ is sufficiently small (depending on $T$ but not on $\eps$).

\bibliographystyle{abbrv}
\bibliography{refer}

\end{document}